\documentclass[12pt]{amsart}

\usepackage{graphicx}
\usepackage{amsmath}
\usepackage{mathtools}
\usepackage{amssymb}
\usepackage{amsfonts}
\usepackage{amsthm}
\usepackage{enumitem}
\usepackage{color}
\usepackage[pagebackref,colorlinks=true,linkcolor=blue,citecolor=blue]{hyperref}

\newcommand{\Z}{\mathbb{Z}}
\newcommand{\R}{\mathbb{R}}
\newcommand{\BC}{\mathbb{C}}

\newcommand{\SL}{\mathrm{SL}}
\newcommand{\GL}{\mathrm{GL}}
\newcommand{\SO}{\mathrm{SO}}
\newcommand{\Sp}{\mathrm{Sp}}
\newcommand{\St}{\mathrm{St}}

\newcommand{\Ind}{\mathrm{Ind}}
\DeclareMathOperator{\soc}{soc}

\newcommand{\abs}{\lvert\cdot\rvert}
\newcommand{\seg}[3]{\Delta_{#1}[#2,#3]}
\newcommand{\Dodd}{\mathcal{D}_{\mathrm{odd}}}
\newcommand{\Deven}{\mathcal{D}_{\mathrm{even}}}
\newcommand{\ptemp}{\pi_{\mathrm{temp}}}
\newcommand{\one}{\mathbf{1}}
\newcommand{\half}[1]{\frac{#1}{2}}

\makeatletter
\newcommand{\bigtimes}{\DOTSB\mathop{\mathpalette\bigtimes@\relax}\slimits@}
\newcommand{\bigtimes@}[2]{\vcenter{\hbox{$\m@th\ifx#1\displaystyle\scalebox{1.9}{$#1\times$}\else\scalebox{1.35}{$#1\times$}\fi$}}}
\makeatother

\newtheorem{thm}{Theorem}[section]

\newtheorem{lemma}[thm]{Lemma}
\newtheorem{prop}[thm]{Proposition}

\newtheorem{defn}[thm]{Definition}
\newtheorem{notation}[thm]{Notation}
\newtheorem{remark}[thm]{Remark}

\numberwithin{equation}{section}

\let\oldbullet\bullet
\renewcommand{\bullet}{{\vcenter{\hbox{\tiny$\oldbullet$}}}}

\begin{document}
\renewcommand{\theequation}{\arabic{equation}}
\numberwithin{equation}{section}

\title[Purely bad parity unitary dual of $\Sp_{2n}$ and $\SO_{2n+1}$]{On unitary representations of $p$-adic $\Sp_{2n}$ and $\SO_{2n+1}$ of purely bad parity}

\author{Chi-Heng Lo}
\address{Department of Mathematics\\
National University of Singapore\\
119076, Singapore}
\email{ch\_lo@nus.edu.sg}

\subjclass[2020]{Primary 22E50; Secondary 11F70}
\keywords{unitary dual, representations of Arthur type, bad parity, $p$-adic classical groups, derivatives}

\begin{abstract}
Let $G$ be $\Sp_{2n}(F)$ or split $\SO_{2n+1}(F)$ over a non-archimedean local field $F$ of characteristic zero. We classify the irreducible unitary representations of $G$ of purely bad parity: such a representation is unitary if and only if it is of Arthur type, except for one additional family when $G=\Sp_{2n}(F)$. In particular, we confirm the remaining case of Ciubotaru's conjecture for $\SO_{2n+1}(F)$, namely that every unitary unipotent representation with integral infinitesimal parameter is of Arthur type.
\end{abstract}

\maketitle

\section{Introduction}\label{sec:intro}

It is a  central problem in representation theory and harmonic analysis to classify the unitary dual of a locally compact group. In this paper, we consider reductive groups over $p$-adic fields. For general linear groups, the unitary dual was classified by Tadi{\'c} (\cite{Tad86}), and for the exceptional group $G_2$ it was determined by Mui{\'c} (\cite{Mui97}). For general split groups, Barbasch and Moy (\cite{BM89}) reduced the unitarity of representations with Iwahori-fixed vectors to a problem for affine Hecke algebras, and the (Iwahori-)spherical unitary dual has been studied extensively by Barbasch and Ciubotaru; see, for example, \cite{Ciu05,BC09,Bar10}.

For classical groups, the complete classification of the unitary dual is still open but much more is known. The unramified unitary dual of split classical groups was classified by Barbasch (\cite{Bar10}) and Mui{\'c}--Tadi{\'c} (\cite{MuT11}), and the generic unitary dual of quasi-split classical groups was determined by Lapid, Mui{\'c} and Tadi{\'c} (\cite{LMT04}). In \cite{Tad23}, Tadi{\'c} classified the unitary representations of corank at most three, and the corank four case was recently settled in \cite{LLW26}. A recent striking result is the classification of the unitary representations of good parity by Atobe and M{\'i}nguez (\cite{AM25}), which we now recall.

Let $F$ be a non-archimedean local field of characteristic zero and let $G=G_n$ be either $\Sp_{2n}(F)$ or the split group $\SO_{2n+1}(F)$. In \cite{AM25}, Atobe and M{\'i}nguez proved that a good parity representation $\pi$ of $G$ is unitary if and only if $\pi$ is of Arthur type. Motivated by this, Ciubotaru (\cite[Conjecture~1.2]{Ciu25}) conjectured that a unipotent representation $\pi$ with \emph{integral} infinitesimal parameter is unitary if and only if $\pi$ is of Arthur type. Although there are counterexamples to this conjecture for exceptional groups (presented by Ciubotaru in his talk \cite{Ciu26}), it is expected to hold for classical groups.

For unipotent representations of $\Sp_{2n}(F)$, the notions of good parity and of integral infinitesimal parameter coincide. 
However, for $\SO_{2n+1}(F)$, a unipotent representation with integral infinitesimal parameter need not be of good parity: such a representation is either of good parity or of ``purely bad parity'', which we now define. Note that this definition is not restricted to unipotent representations.

For an irreducible unitary supercuspidal representation $\rho$ of some $\GL_d(F)$ which is self-dual, let $\alpha_\rho\geq 0$ denote its reducibility point. Namely, $\alpha_{\rho}$ is the unique non-negative real number such that $\rho\abs^{\alpha_{\rho}}\rtimes\one_{G_0}$ is reducible; see \cite{Sil80}. According to \cite[Remark 4.5.2]{MW06}, we know that $\alpha_{\rho} \in \{0, \half{1},1\}$. 

\begin{defn}\label{def:pbp}
An irreducible representation $\pi$ of $G_n$ is of \emph{purely bad parity} if, whenever
\[
\pi\hookrightarrow \rho_1\abs^{x_1}\times\cdots\times\rho_r\abs^{x_r}\rtimes\sigma
\]
with $\rho_i$ unitary supercuspidal, $x_i\in\R$, and $\sigma$ supercuspidal, we have
\begin{enumerate}[label=(\roman*)]
\item $\sigma$ is the trivial representation of $G_0$, and
\item each $\rho_i$ is self-dual and $x_i\in\alpha_{\rho_i}+\tfrac12+\Z$.
\end{enumerate}
\end{defn}

The goal of this paper is to prove the remaining part of the conjecture of Ciubotaru for $\SO_{2n+1}(F)$. Namely, we show that a representation of $\SO_{2n+1}(F)$ of purely bad parity is unitary if and only if it is of Arthur type. We also classify the unitary representations of $\Sp_{2n}(F)$ of purely bad parity. Our main result is the following.

\begin{thm}\label{thm:main}
Suppose that $\pi$ is a representation of $G$ of purely bad parity. Then $\pi$ is unitary if and only if either $\pi$ is of Arthur type, or $G=\Sp_{2n}(F)$ and $\pi\cong \abs^{1/2}\rtimes\pi'$ (always an irreducible induction), for some Arthur type $\pi'$ of $\Sp_{2n-2}(F)$.
\end{thm}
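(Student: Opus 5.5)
The plan is to prove the two directions separately; the difficulty lies entirely in showing that unitarity forces the stated form.

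\textbf{Sufficiency.} Representations of Arthur type are unitary, since they are constituents of local Arthur packets and hence unitary by Arthur and Mœglin. For $G=\Sp_{2n}(F)$ and $\pi'$ of Arthur type of $\Sp_{2n-2}(F)$, I would argue as follows.
\begin{enumerate}[label=(\alph*)]
\item First prove irreducibility of $\abs^{1/2}\rtimes\pi'$ for $\pi'$ of purely bad parity. Since $\alpha_{\one}=1/2$ for $\Sp$, the exponent $1/2$ lies in the bad-parity set. I would use Jantzen/Atobe--Mínguez type irreducibility criteria via derivatives: reducibility would require a nonzero $\abs^{\pm 1/2}$-derivative or a socle computation that is impossible in purely bad parity.
\item Then obtain unitarity by deformation. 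The complementary series $\abs^{s}\rtimes\pi'$ for $0\le s\le 1/2$ is irreducible; $\abs^s$ is unitary for $s=0$, and $\one_{\GL_1}\rtimes\pi'$ is unitary. Irreducibility along $[0,1/2]$ means the hermitian form stays definite up to $s=1/2$.
\end{enumerate}
For the irreducibility along the whole segment, purely bad parity means that reducibility points of $\abs^{s}\rtimes\pi'$ can only occur at $s\in 1/2+\Z$ shifted appropriately. One needs to check that none occurs in $(0,1/2)$. The endpoint $1/2$ is exactly the case handled in (a).

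\textbf{Necessity.} I would reduce to the good-parity result of Atobe--Mínguez via a transfer between bad-parity and good-parity settings.

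The key structural observation is that a purely bad parity $\pi$ has $L$-parameter whose components $\rho\otimes S_a$ all have the ``wrong'' parity. Its Langlands data therefore take the form $\pi\cong L(\Delta_1,\dots,\Delta_r;\ptemp)$, where the tempered part is $\tau\rtimes\tau^\vee$-type and is induced irreducibly from a $\GL$-representation.

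\textbf{Main steps.}
\begin{enumerate}[label=(\arabic*)]
\item Write $\pi$ as a fully induced representation $\pi\cong \tau\rtimes\pi_0$ where possible. In purely bad parity, one expects $\pi\cong \tau\rtimes\one_{G_0}$ for an irreducible $\GL_n$-representation $\tau$ in many cases, or at least a Mœglin-type ``$\GL$-part''.
\item Use Tadić's classification of unitary $\GL$-representations together with a Hermitian/unitarizability reduction: $\tau\rtimes\one$ unitary should force $\tau$ to be unitary up to a complementary-series deformation.
\item Compare with Arthur packets. Arthur type purely bad parity representations are exactly those of the form $\tau\rtimes\one$ with $\tau$ a product of Speh representations $u_\rho(a,b)$ of bad parity, possibly twisted by complementary series.
\end{enumerate}

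\textbf{Expected obstacle.} The hardest part is ruling out non-unitary candidates, namely those $\tau$ that are Hermitian but not of this form. For this I would use the Atobe--Mínguez strategy. Compute derivatives $D_{\rho\abs^x}$, produce a representation $\pi^-$ of smaller rank with $\pi\hookrightarrow \rho\abs^x\rtimes\pi^-$, and argue by induction together with the "unitarity preserved under highest derivative along an end of a complementary series" principle. When $\pi$ is not of Arthur type, exhibit a complementary series from $\pi$ ending at a reducible point that has a non-unitary subquotient, giving a contradiction.

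The delicate issue is the exceptional $\Sp$ family. There $\abs^{1/2}$ interacts with $\alpha_{\one}=1/2$, producing a unitary non-Arthur representation that the induction must specifically allow. I expect the main work to be the case analysis around $\rho=\one$ for $\Sp_{2n}$, including proving that stacking two copies (e.g.\ $\abs^{1/2}\times\abs^{1/2}\rtimes\pi'$) fails unitarity.
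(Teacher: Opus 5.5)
Your sufficiency argument is the paper's: deform $\abs^{s}\rtimes\pi'$ over $s\in[0,\half{1}]$, starting from the unitarily induced $\one_{\GL_1}\rtimes\pi'$. Irreducibility on the closed interval should come from Theorem~\ref{thm:BS}, Lemma~\ref{lem:Speh-irr}(a) and the irreducibility of $\abs^{s}\rtimes\one_{\Sp_0}$ for $0\le s\le\half{1}$, rather than an unspecified derivative argument. It genuinely needs $\pi'$ of Arthur type, not merely of purely bad parity: $\abs^{1/2}\times\abs^{1/2}\rtimes\one_{\Sp_0}$ is reducible. Also, $\alpha_{\one}=1$ for $\Sp_{2n}$, not $\half{1}$. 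The principal series $\abs^{s}\rtimes\one$ of $\SL_2(F)$ reduces at $s=\pm1$, and with $\alpha_{\one}=\half{1}$ the exponent $\half{1}$ would be of good parity.

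The genuine gap is the ``only if'' direction, where your outline has no working mechanism.
\begin{itemize}
\item You never specify a unitarity-preserving passage from bad to good parity, and the paper uses none.
\item Your step (2), that unitarity of $\tau\rtimes\one$ forces $\tau$ to be unitary up to complementary series, is essentially the theorem itself. It also fails as stated: $\abs^{1/2}\rtimes\one_{\Sp_0}$ is unitary but $\abs^{1/2}$ is not.
\item In step (3), Arthur type representations carry no complementary-series twists, which would in any case leave the purely bad parity coset. They are exactly M{\oe}glin's $\bigtimes_i u_{\rho_i}(a_i,b_i)\rtimes\one$, equivalently conditions (a) and (b) of Lemma~\ref{lem:pA-type}.
\item The principle that highest derivatives preserve unitarity is not available; already $L_{\abs^{-1/2}}(\one_{\GL_2})=\abs^{1/2}$. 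You also do not say which complementary series through $\pi$ should end at which reducible point.
\item Your predicted hard case is misidentified. The Langlands subquotient of $\abs^{1/2}\times\abs^{1/2}\rtimes\pi'$ is $u_{\one}(1,2)\rtimes\pi'$, which is of Arthur type and hence unitary.
\end{itemize}

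What is missing are the paper's three mechanisms.

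First, \emph{Arthurization}. For unitary $\pi$, the induction $\widetilde{u}_\Delta\rtimes\pi$ is semisimple with unitary constituents by \ref{U1}. So its Langlands subquotient, which completes $\Delta$ to $u_\Delta$ in the $L$-parameter, is again unitary.

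Second, the \emph{parity constraint}. Arthurizing $\pi$ yields a unitary irreducible induction, the $s=0$ member of the family $\bigtimes_{\Delta}u_{\rho_\Delta}(a_\Delta,b_\Delta-1)\abs^{-\frac12+s}\rtimes\pi_A$, where $\Delta$ runs over segments with $m_\Delta(\pi)$ odd. This family stays irreducible for $0\le s<\half{1}$ by Bo\v{s}njak--Stadler and Atobe. The classification of unitary complementary Arthur representations, with Theorem~\ref{thm:BS}, then forces each good parity induction $u_{\rho_\Delta}(a_\Delta,b_\Delta-1)\rtimes\one$ to be irreducible. By Atobe's extended multi-segments this happens only for $\abs^{0}\rtimes\one_{\Sp_0}$. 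This is the real origin of the exceptional $\Sp$ family, and good parity enters through this complementary series, not through a transfer.

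Third, after reducing to a minimal counter example, one shows $\widetilde{u}_\Delta\rtimes\pi$ is not semisimple. Its subquotient $\tau=u_\Delta\times\bigtimes_i u_i\rtimes\pi_0$ satisfying $(\mathrm{A}')$ is not a subrepresentation. Write $\Delta=\seg{\rho}{x}{y}$. A suitable composition of derivatives beginning with $D_{\rho\abs^{x+1}}$ is nonzero on every subrepresentation by Frobenius reciprocity, because the lowest segment of $\widetilde{u}_\Delta$ supplies an extra $\rho\abs^{x+1}$. It vanishes on $\tau$ by the Leibniz rule and Lemma~\ref{lem:consecutive der Speh}, because that segment is locked inside $u_\Delta$.
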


For brevity, we give a name to the condition appearing in the above theorem.

\begin{defn}\label{def A prime}
    We say that a representation $\pi$ of $G_n$ of purely bad parity satisfies condition $(\mathrm{A}')$ if either $\pi$ is of Arthur type, or $G=\Sp_{2n}(F)$ and $\pi\cong \abs^{1/2}\rtimes\pi'$  for some $\pi'$ of Arthur type of $\Sp_{2n-2}(F)$.
\end{defn}

We now explain the main idea of the proof. Let $\pi$ be a representation of $G$ of purely bad parity. We decompose its $L$-parameter $\phi_{\pi}$, which is a $(2n+1)$-dimensional orthogonal (resp.\ $2n$-dimensional symplectic) representation of $W_F \times \SL_2(\BC)$ if $G=\Sp_{2n}(F)$ (resp.\ $G=\SO_{2n+1}(F)$), as
\[ \phi_{\pi}= \bigoplus_{i \in I} \rho_i\lvert \cdot\rvert^{x_i} \otimes S_{a_i}.\]
Here, $\rho_i$ is self-dual, $x_i \in \R$, and $S_{a_i}$ is the $a_i$-dimensional irreducible representation of $\SL_2(\BC)$. Let
\[ m_{\rho\lvert\cdot \rvert^{x}\otimes S_{a}}(\pi):= |\{i \in I\ | \ \rho_i\lvert \cdot\rvert^{x_i} \otimes S_{a_i}= \rho\lvert\cdot \rvert^{x}\otimes S_{a}\}|\]
denote the corresponding multiplicity. Unlike the good parity case, Arthur type representations of purely bad parity is simple to describe. Indeed, a purely bad parity representation $\pi$ is of Arthur type if and only if the following two conditions both hold.
\begin{itemize}
    \item [(a)]For any $\rho\lvert\cdot \rvert^{x}\otimes S_{a}$, the multiplicity $m_{\rho\lvert\cdot \rvert^{x}\otimes S_{a}}(\pi)$ is even, unless $\rho\lvert\cdot \rvert^{x}\otimes S_{a}$ is the trivial representation and $G= \Sp_{2n}(F)$.
    \item [(b)] For any $\rho\lvert\cdot \rvert^{x}\otimes S_{a}$ such that $x<0$, we have
    \[ m_{\rho\lvert\cdot \rvert^{x}\otimes S_{a}}(\pi) \leq m_{\rho\lvert\cdot \rvert^{x+1}\otimes S_{a}}(\pi). \]
\end{itemize}
This follows from M{\oe}glin's result \cite[\S 4.2]{Moe11} that Arthur type representations of purely bad parity are all irreducibly induced from products of generalized Speh representations (see Theorem \ref{thm:Moeglin} and \S \ref{subsec:segments} for the notation)
\[\pi_A= \bigtimes_{j \in J } u_{\rho_j}(a_j, b_j) \rtimes \pmb{1},\]
where $\pmb{1}$ is the trivial representation of $\Sp_{0}(F)$ or $\SO_{1}(F)$. Each generalized Speh representation $ u_{\rho_j}(a_j, b_j)$ contributes two copies of
\[ \rho_j\lvert \cdot\rvert^{\half{1-b_j}}\otimes S_{a_j} \oplus  \rho_j\lvert \cdot\rvert^{\half{3-b_j}}\otimes S_{a_j}   \oplus \cdots \oplus  \rho_j\lvert \cdot\rvert^{\half{b_j-1}}\otimes S_{a_j} \]
 to the $L$-parameter $\phi_{\pi_A}$, and the observation follows. (Here we regard $\phi_{\pmb{1}}$ of $\Sp_{0}(F)$ as the trivial representation of $W_F \times \SL_2(\BC)$.)

 Now suppose that $\pi$ is unitary but not of Arthur type. Further assume that 
 \[m_{\rho\lvert\cdot \rvert^{\half{1-b}}\otimes S_{a}}(\pi) - m_{\rho\lvert\cdot \rvert^{\half{3-b}}\otimes S_{a}}(\pi)\]
 is a positive even integer $2r$. The main idea of our argument is the following ``Arthurization'' process of the $L$-parameter: Set 
 \[ u_{\rho\lvert\cdot \rvert^{\half{1-b}}\otimes S_{a}}:= u_{\rho}(a,b),\ \  \widetilde{u}_{\rho\lvert\cdot \rvert^{\half{1-b}}\otimes S_{a}}:= u_{\rho}(a,b-2). \]
 Since $\pi$ is unitary, the unitary induction $\widetilde{u}_{\rho\lvert\cdot \rvert^{\half{1-b}}\otimes S_{a}} \rtimes \pi$ is semisimple and all irreducible subquotients are unitary. In particular, the Langlands subquotient $\pi'$ of $(\widetilde{u}_{\rho\lvert\cdot \rvert^{\half{1-b}}\otimes S_{a}})^r \rtimes \pi$ (see the end of \S \ref{subsec:Langlands} for definition), which has $L$-parameter
 \[ \phi_{\pi'}= \phi_{\pi}+ ( \rho\lvert \cdot\rvert^{\half{3-b}}\otimes S_a \oplus  \rho\lvert \cdot\rvert^{\half{5-b}}\otimes S_a   \oplus \cdots \oplus \rho\lvert \cdot\rvert^{\half{b-3}}\otimes S_a )^{\oplus 2r},\]
 is unitary. Then note that
 \[m_{\rho\lvert\cdot \rvert^{\half{1-b}}\otimes S_{a}}(\pi') - m_{\rho\lvert\cdot \rvert^{\half{3-b}}\otimes S_{a}}(\pi')=0. \]
Thus $\pi'$ is closer to being of Arthur type. Indeed, if $\pi$ satisfies condition (a), then repeating this process finitely many times produces a representation of Arthur type.

Now we outline the proof of Theorem \ref{thm:main}. 
\begin{enumerate}
    \item [(1)] In the first step, we show that if $\pi$ is a unitary representation of purely bad parity, then condition (a) above holds, except that when $G_n=\Sp_{2n}(F)$, $m_{\lvert\cdot \rvert^{1/2} \otimes S_1}(\pi)$ can be odd; see Proposition \ref{prop:parity}. This exception leads to the condition $(\mathrm{A}')$ in Definition \ref{def A prime}. 

    We sketch the proof of this proposition. Suppose that a unitary representation $\pi$ violates this condition. Then, via a slight variant of the above Arthurization process, we obtain a unitary representation $\pi''$, which is the Langlands subquotient of an induction $\Pi_0$ where
    \[ \Pi_s= \bigtimes_{i \in I_{odd}} u_{\rho_i}(a_i,b_i)\lvert\cdot \rvert^{-1/2+s} \times \bigtimes_{i \in I_{even}} u_{\rho_i}(a_i,b_i) \rtimes \pmb{1}. \]
    However, applying \cite[Theorem 1.2]{BS25} and \cite[Theorem 5.3]{Ato22b}, the induction $\Pi_s$ is irreducible for all $ s \in [0, \half{1})$. In particular, we have $\pi''=\Pi_0$. By the classification of unitary complementary Arthur representations (\cite[Theorem~1.3]{HJLLZ25} or \cite[Theorem 4.1]{AM25}), the representations $\Pi_{s}$ with $s \in (0,\half{1})$ are unitary if and only if $u_{\rho_i}(a_i,b_i)\rtimes\pmb{1}$ is irreducible for every $i\in I_{odd}$, which in turn is equivalent to the irreducibility of the unitary induction $\Pi_{\half{1}}$. Finally, using \cite[Theorem~4.4]{Ato22b} and the non-vanishing criterion of \cite{Ato22a}, we show that that $\Pi_{\half{1}}$ is irreducible if and only if $I_{odd}$ is empty, or $G_n=\Sp_{2n}(F)$ and  
    \[ \bigtimes_{i \in I_{odd}} u_{\rho_i}(a_i,b_i)= \lvert\cdot \rvert^{0}.\]
    This gives the desired conclusion.
    \item [(2)] In the second step, we reduce the theorem to a special case; see Proposition \ref{prop:reduction}. Suppose that there exists a unitary representation $\pi$ of purely bad parity but condition $(\mathrm{A}')$ fails. Using the control of $\phi_{\pi}$ obtained in Step (1), we may repeatedly apply the Arthurization process to construct a sequence of unitary representations,
    \[ \pi=\pi^1 \rightsquigarrow \pi^2 \rightsquigarrow \cdots  \rightsquigarrow \pi^{r-1} \rightsquigarrow  \pi^r,\]
    where the last one, $\pi^r$, satisfies condition $(\mathrm{A}')$.    
    We look at the representation $\pi''=\pi^{r-1}$, which is one step away from satisfying condition $(\mathrm{A}')$. If we apply the Arthurization in a suitable order, we can assume that the inequality in condition (b) for $\pi''$ holds for all $\rho\lvert\cdot \rvert^{x}\otimes S_{a}$ such that $x<0$, except for exactly one $\rho_0\lvert\cdot \rvert^{x_0}\otimes S_{a_0}$, for which
 \[ m_{\rho_0\lvert\cdot \rvert^{x_0}\otimes S_{a_0}}(\pi'') = m_{\rho_0\lvert\cdot \rvert^{x_0+1}\otimes S_{a_0}}(\pi'')+2. \]
We call a representation of this form a \emph{minimal counter example} (see Definition \ref{def:reduced} for a precise definition), since it is one step away from satisfying condition $(\mathrm{A}')$. 
\item[(3)] In the final step, we show that no minimal counter example is unitary; see Theorem \ref{thm:reduced}. 

Let $\pi$ be a minimal counter example, and let $\Delta:=\St(\rho_0,a_0)\abs^{x_0}$ be the segment above. 
Applying the Arthurization process to $\pi$ one more time, we obtain a representation $\tau$ satisfying condition $(\mathrm{A}')$ that occurs as a subquotient of the induction $\widetilde{u}_\Delta \rtimes \pi$. By \ref{U1}, to show that $\pi$ is not unitary, it suffices to show that $\tau$ cannot occur as a subrepresentation of  $\widetilde{u}_\Delta \rtimes \pi$. To achieve this, we design a composition of derivatives $D$ (see \S \ref{subsec:derivatives}) case by case and verify that
\begin{itemize}
    \item any subrepresentation $\sigma$ of $\widetilde{u}_\Delta \rtimes \pi$ satisfies $D(\sigma) \neq 0$ by Frobenius reciprocity, and
    \item we have  $D(\tau)=0$ by the Leibniz rule for derivatives.
\end{itemize}
This completes the proof of Theorem \ref{thm:main}.
\end{enumerate}

We remark that our method mostly relies on Tadi{\'c}'s formula and Frobenius reciprocity, which are available for other classical groups as well. Besides these, we use the following inputs:
\begin{enumerate}
    \item irreducibility criterion for parabolic inductions from essentially generalized Speh representations and a representation of Arthur type \cite{Ato22b, BS25};
    \item the classification of unitary complementary Arthur representations ; see \cite[Theorem 1.3]{HJLLZ25} and \cite[Theorem 4.1]{AM25};
    \item the irreducibility criterion for the good parity unitary induction $u_{\rho}(a,b)\rtimes\pmb{1}$ from the Siegel parabolic subgroup, which amounts to deciding whether the corresponding local Arthur packet is a singleton.
\end{enumerate}
Once these results are available for other classical groups, our argument should apply with minor changes.

\subsection*{Structure of the paper}
In \S\ref{sec:prelim}, we recall the Langlands classification and the local Langlands correspondence, derivatives, and the facts on unitarity that we need. In \S\ref{subsec:Speh}, we study generalized Speh representations: we prove a constraint on their consecutive derivatives (Lemma~\ref{lem:consecutive der Speh}), collect consequences of Tadi{\'c}'s irreducibility criterion, recall the results of M{\oe}glin, Bo\v{s}njak--Stadler and Atobe, and prove the easier direction of Theorem~\ref{thm:main} (Proposition~\ref{prop:A prime unitary}). In \S\ref{sec:reduction}, we characterize condition $(\mathrm{A}')$ in terms of multiplicities, prove the parity constraint (Proposition~\ref{prop:parity}), and reduce Theorem~\ref{thm:main} to minimal counter examples (Proposition~\ref{prop:reduction}). In \S\ref{sec:main}, we show that minimal counter examples are not unitary (Theorem~\ref{thm:reduced}). Finally, we complete the proof of Theorem~\ref{thm:main} in \S\ref{sec:proof}.

\subsection*{Acknowledgments}
The author would like to thank Baiying Liu for his constant support, and Alexander Hazeltine, Alberto M{\'i}nguez and Brian Wen for helpful discussions. Some of the ideas in this paper were inspired by talks and discussions at the conference “Recent Progress on the Unitary Dual Problem” at the Brin Mathematics Research Center, and the author is grateful to the organizers.

\subsection*{Declaration on the use of generative AI}
During the preparation of this paper, the author used Claude (Anthropic), an AI assistant, to typeset handwritten notes into \LaTeX, to search the literature, and to polish the exposition. All mathematical statements and proofs were developed and verified by the author, who takes full responsibility for the content of the paper.

\section{Preliminaries}\label{sec:prelim}

\subsection{Notation}\label{subsec:notation}

Throughout, $F$ is a non-archimedean local field of characteristic zero and $\abs$ denotes its normalized absolute value, as well as the character $\abs\circ\det$ of $\GL_k(F)$.
We write $G_n$ for $\Sp_{2n}(F)$ or split $\SO_{2n+1}(F)$, and $\one=\one_{G_0}$ for the trivial representation of $G_0$, which is $\Sp_0(F)$ or $\SO_1(F)$.  All representations are smooth and of finite length. We let $\Pi(G)$ denote the set of isomorphism classes of irreducible representations of $G$, for $G=G_n$ or $\GL_n(F)$.

We fix a Borel subgroup of $G=G_n$ or $\GL_n(F)$, which determines a collection of standard parabolic subgroups. These standard parabolic subgroups are in bijection with ordered partitions of $n$ (with $k_r\geq 0$ allowed in the case of $G_n$), so we write them as $P_{(k_1, \ldots ,k_r)}$ with $\sum_{i} k_i=n$. The Levi subgroup $M_{(k_1, \ldots ,k_r)}$ is isomorphic to
\[ \begin{cases}
    \GL_{k_1}(F) \times \cdots \times \GL_{k_r}(F) & \text{if }G= \GL_n(F),\\
    \GL_{k_1}(F) \times \cdots \times \GL_{k_{r-1}}(F) \times G_{k_r} & \text{ if }G=G_n.
\end{cases}  \]
For representations $\tau_i$ of $\GL_{k_i}(F)$ and $\pi$ of $G_m$, we regard
\[\tau_1\otimes\cdots\otimes\tau_r, \quad\text{and}\quad \tau_1\otimes\cdots\otimes\tau_r\otimes\pi  \]
as representations of the standard Levi subgroups $M_{(k_1, \ldots ,k_r)}$ of $\GL_{k_1+\cdots+k_r}(F)$ or $M_{(k_1, \ldots ,k_r, m)}$ of $G_{k_1+\cdots+k_r+m}$, respectively. Then let
\[
\tau_1\times\cdots\times\tau_r \quad\text{and}\quad \tau_1\times\cdots\times\tau_r\rtimes\pi
\]
denote the normalized parabolic inductions from the corresponding standard parabolic subgroups to $\GL_{k_1+\cdots+k_r}(F)$ and $G_{k_1+\cdots+k_r+m}$, respectively.

For a representation $\tau$ of a general linear group and $k\geq 0$ we write $\tau^k=\tau\times\cdots\times\tau$ ($k$ times). We use ``$=$'' between induced representations to mean an isomorphism of finite length representations.

\subsection{Segments}\label{subsec:segments}

Let $\rho$ be an irreducible unitary supercuspidal representation of $\GL_d(F)$. Any essentially square-integrable representation of a general linear group is of the form
\[
\Delta=\St(\rho,a)\abs^{s},\qquad a\in\Z_{>0},\ s\in\R,
\]
where $\St(\rho,a)$ is the Steinberg representation, i.e.\ the unique irreducible subrepresentation of $\rho\abs^{\frac{a-1}{2}}\times\rho\abs^{\frac{a-3}{2}}\times\cdots\times\rho\abs^{\frac{1-a}{2}}$. We set 
\[
e(\Delta):=s,\qquad x(\Delta):=\frac{a-1}{2}+s,\qquad y(\Delta):=\frac{1-a}{2}+s,
\]
and we also write $\Delta=\seg{\rho}{x(\Delta)}{y(\Delta)}$. Thus $\seg{\rho}{x}{y}$ is defined whenever $x-y\in\Z_{\geq0}$, and it is the unique irreducible subrepresentation of $\rho\abs^{x}\times\rho\abs^{x-1}\times\cdots\times\rho\abs^{y}$. We refer to such $\Delta$ as a \emph{segment}. 

We sometimes identify $\seg{\rho}{x}{y}$ with the set of supercuspidal representations \[\{\rho\abs^x,\rho\abs^{x-1},\dots,\rho\abs^y\},\] and we use the convention that $\seg{\rho}{x}{x+1}$ is the trivial representation of $\GL_0(F)$. The contragredient of $\Delta$ is denoted by $\Delta^\vee=\seg{\rho^\vee}{-y}{-x}$, and $\Delta\abs^{t}=\seg{\rho}{x+t}{y+t}$.

Two segments $\Delta=\seg{\rho}{x}{y}$ and $\Delta'=\seg{\rho'}{x'}{y'}$ are \emph{linked} if $\rho\cong\rho'$, $x-x'\in\Z$, neither of $\Delta,\Delta'$ contains the other, and $\Delta\cup\Delta'$ is again a segment. By \cite{Zel80}, $\Delta\times\Delta'$ is irreducible if and only if $\Delta$ and $\Delta'$ are not linked. 

\subsection{Langlands classification and local Langlands correspondence}\label{subsec:Langlands}
In this subsection, we recall the  subrepresentation form of the Langlands classification (\cite{BW00,Sil78,Kon03}), and the local Langlands correspondence for general linear groups and for $G_n$.

For general linear groups, the Langlands classification states that for every irreducible representation $\pi$, there exist segments $\Delta_1,\ldots,\Delta_f$ with $e(\Delta_1)\leq\cdots\leq e(\Delta_f)$ such that
\[ \pi \hookrightarrow \Delta_1\times\cdots\times\Delta_f\]
as the unique irreducible subrepresentation. The multi-set $\{\Delta_1,\ldots, \Delta_f\}$ is uniquely determined by $\pi$ \cite{Zel80}, and we call the right hand side above the \emph{standard module} of $\pi$, denoted by $M(\pi)$, and write 
\[\pi= L(\Delta_1,\dots,\Delta_f).\]

The local Langlands correspondence for general linear groups (\cite{HT01,Hen00,Sch13}) gives a bijection between $\Pi(\GL_n(F))$ and the set of $L$-parameters of $\GL_n(F)$, that is, isomorphism classes of $n$-dimensional representations of the Weil--Deligne group $W_F \times \SL_2(\BC)$.
Under this correspondence, a unitary supercuspidal representation $\rho$ of $\GL_d(F)$ corresponds to a $d$-dimensional irreducible unitary representation of $W_F$, which we again denote by $\rho$ by abuse of notation. The remaining cases are described in terms of this correspondence for supercuspidal representations. For an essentially square-integrable representation $\Delta= \St(\rho,a)\abs^s$, the $L$-parameter is given by $\phi_{\Delta}:= \rho \abs^s\otimes S_{a}$, where $S_a$ is the unique $a$-dimensional irreducible representation of $\SL_2(\BC)$. For a general representation 
\[\pi= L(\Delta_1,\dots,\Delta_f),\]
the $L$-parameter is given by
\[ \phi_{\pi}= \phi_{\Delta_1} + \cdots + \phi_{\Delta_f}.\]

Now we turn to the classical group $G_n$. The Langlands classification states that for any $\pi \in \Pi(G_n)$, there exist
\begin{itemize}
    \item a collection of segments $\Delta_1,\dots,\Delta_f$ with $
e(\Delta_1)\leq\cdots\leq e(\Delta_f)<0$, and
\item a tempered representation $\ptemp \in \Pi(G_m)$ with $m \leq n$, such that
\end{itemize}
\[ \pi \hookrightarrow \Delta_1\times\cdots\times\Delta_f\rtimes\ptemp\]
as the unique irreducible subrepresentation. The data $(\{\Delta_1,\dots,\Delta_f\},\ptemp)$, called the $L$-data of $\pi$, are uniquely determined by $\pi$. We call the right hand side above the \emph{standard module} of $\pi$, denoted by $M(\pi)$, and write
\[ \pi=L(\Delta_1,\dots,\Delta_f;\ptemp).\]

The local Langlands correspondence from \cite{Art13} assigns to each $\pi \in \Pi(G_n)$ an $L$-parameter $\phi_{\pi}$ of $G_n$, which we identify with an isomorphism class of 
\begin{itemize}
\item $2n$-dimensional symplectic representations of $W_F \times \SL_2(\BC)$ if $G_n=\SO_{2n+1}(F)$, or
    \item $(2n+1)$-dimensional orthogonal representations of $W_F \times \SL_2(\BC)$ if $G_n=\Sp_{2n}(F)$.
\end{itemize}
In terms of the Langlands classification, the $L$-parameters of $\pi$ and $\ptemp$ are related by
\[ \phi_{\pi}= \phi_{\Delta_1}+ \cdots + \phi_{\Delta_f} + \phi_{\ptemp} +  \phi_{\Delta_f}^{\vee}+ \cdots + \phi_{\Delta_1}^{\vee}.\]

The $L$-packet $\Pi_{\phi}(G):= \{\pi \in \Pi(G) \ | \ \phi_{\pi}=\phi \}$ is in bijection with the set of irreducible characters of the component group 
\[ \mathcal{S}_{\phi}:= Z_{\widehat{G}}(\phi) /Z_{\widehat{G}}(\phi)^0 Z(\widehat{G})^{\Gamma}.\]

Now we specialize to the case that
\[ \pi=L(\Delta_1,\dots,\Delta_f;\ptemp)\]
is of purely bad parity. In this case, we have a more precise description of the $L$-parameter. Since
the tempered representation $\ptemp$ is also of purely bad parity, it is an irreducible induction from a product of Steinberg representations (see Theorem \ref{thm:Moeglin} below)
\[ \ptemp= \St(\rho_1,a_1) \times \cdots \times \St(\rho_k, a_k) \rtimes \pmb{1}.\]
Then we have 
\[ \phi_{\ptemp}= (\rho_1 \otimes S_{a_1} +\cdots + \rho_k\otimes S_{a_k})^{\oplus 2} + \phi_{\pmb{1}}. \]
Here $\phi_{\pmb{1}}$ is the zero representation if $G_{n}= \SO_{2n+1}(F)$, and the trivial representation if $G_n=\Sp_{2n}(F)$. Consequently, $\mathcal{S}_{\phi_{\pi}}$ is trivial, and hence the $L$-packet $ \Pi_{\phi_{\pi}}(G)=\{\pi\}$ is a singleton. In other words, there is a bijection
    \[ \{\text{purely bad parity representations of }G_n\} \longleftrightarrow \{\text{purely bad parity }L\text{-parameters of }G_n\}.\]

We end this subsection by introducing the notion of \emph{Langlands subquotient}.
Suppose that we have a parabolic induction of the form
\[ \Pi= \tau_1 \times \cdots \times \tau_f \rtimes \sigma,\]
where $\tau_i$ and $\sigma$ are irreducible, not necessarily essentially square-integrable. Then there exists at least one irreducible subquotient of $\Pi$ that lies in the $L$-packet of 
\[ \phi_{\Pi}:=\phi_{\tau_1}+ \cdots + \phi_{\tau_f} + \phi_{\sigma} +  \phi_{\tau_f}^{\vee}+ \cdots + \phi_{\tau_1}^{\vee},  \]
where $\phi_{\tau_i}$ and $\phi_{\sigma}$ are the $L$-parameters of $\tau_i$ and $\sigma$ respectively; see \cite[Lemma~10.3]{HLLS24} for example. As explained above, in the purely bad parity setting, the $L$-packet $\Pi_{\phi_{\Pi}}$ is a singleton. Therefore, we call the unique member of $\Pi_{\phi_{\Pi}}$ the \emph{Langlands subquotient} of the induction $\Pi$.

\subsection{Derivatives}\label{subsec:derivatives}

We recall the notion of $\rho\abs^z$-derivatives of Jantzen \cite{Jan18} (see also \cite{AM23}). Let $\rho\abs^z$ be a supercuspidal representation of $\GL_d(F)$ with $\rho$ self-dual and $z\in\R$. For a representation $\pi$ of $G_n$ and an integer $1\leq k\leq n/d$, recall that $P_{(kd, n-kd)}$ is the standard parabolic subgroup of $G_n$ with Levi subgroup isomorphic to $\GL_{kd}(F) \times G_{n-kd}$. We write the semisimplified Jacquet module as 
\[ [\textrm{Jac}_{P_{(kd, n-kd)}} (\pi)]= (\rho\abs^z)^k \otimes D^{(k)}_{\rho\abs^z}(\pi)+ \sum_{i\in I} \tau_i \otimes \pi_i,\]
where $\tau_i \not\cong (\rho\abs^z)^k$ for all $i \in I$. The semisimple representation $D^{(k)}_{\rho\abs^z}(\pi)$ (possibly zero) is called the $k$-th derivative of $\pi$ with respect to $\rho\abs^z$. We say that $D^{(k)}_{\rho\abs^z}(\pi)$ is the highest derivative of $\pi$ if it is nonzero but $D^{(k+1)}_{\rho\abs^z}(\pi)=0$, and that $\pi$ is $\rho\abs^{z}$-reduced if $D^{(1)}_{\rho\abs^z}(\pi)=0$.

In the case of general linear groups, we have left and right derivatives defined similarly: 
\begin{align*}
   [\textrm{Jac}_{P_{(kd, n-kd)}} (\pi)]&= (\rho\abs^z)^k \otimes L^{(k)}_{\rho\abs^z}(\pi)+ \sum_{i\in I_1} \tau_i \otimes \pi_i,\\ 
  [\textrm{Jac}_{P_{(n-kd, kd)}} (\pi)]&=   R^{(k)}_{\rho\abs^z}(\pi) \otimes (\rho\abs^z)^k + \sum_{i\in I_2} \pi_i \otimes \tau_i, 
\end{align*}
where $\tau_i \not\cong (\rho\abs^z)^k$ for all $i \in I_1\sqcup I_2$. Highest left and right derivatives, and left and right $\rho\abs^{z}$-reducedness, are defined in the same way.

We write $D_{\rho\abs^z}:=D_{\rho\abs^z}^{(1)}$, $L_{\rho\abs^z}:=L_{\rho\abs^z}^{(1)}$, and $R_{\rho\abs^z}:=R_{\rho\abs^z}^{(1)}$ for simplicity. Note that
\[D_{\rho\abs^z}^{(k)}= \frac{1}{k!} \underbrace{D_{\rho\abs^z} \circ  \cdots \circ D_{\rho\abs^z}}_{k \text{ times}}. \]

Suppose that $\pi$ is irreducible, and that $z \neq 0$ in the case of classical groups. Then the highest derivative of $\pi$ with respect to $\rho\abs^z$ is irreducible (see \cite[\S3.2]{AM23} and the references therein). Explicit formulas are given in \cite[Theorem~5.11]{LM16} for general linear groups, and in \cite[Proposition~6.1, Theorems~7.1 and 7.4]{AM23} for $G_n$. In this paper, we do not use these formulas. Instead, we only use the following properties of these derivatives.
\begin{enumerate}
    \item [(i)] Frobenius reciprocity: If 
    \[ \pi \hookrightarrow (\rho\abs^{x_1})^{r_1} \times \cdots\times  (\rho\abs^{x_k})^{r_k} \rtimes \Pi\qquad  (\text{resp. } \sigma \hookrightarrow (\rho\abs^{x_1})^{r_1} \times \cdots\times  (\rho\abs^{x_k})^{r_k} \times \Sigma),\]
    for some nonzero representation $\Pi$ of a classical group (resp.\ $\Sigma$ of a general linear group),
    then
    \[ D_{\rho\abs^{x_k}}^{(r_k)}\circ\cdots \circ D_{\rho\abs^{x_1}}^{(r_1)}(\pi) \neq 0,\qquad(\text{resp. }  L_{\rho\abs^{x_k}}^{(r_k)}\circ \cdots \circ L_{\rho\abs^{x_1}}^{(r_1)}(\sigma) \neq 0). \]
    Conversely, if $D_{\rho\abs^x}^{(k)}(\pi) \neq 0$ (resp. $L_{\rho\abs^x}^{(k)}(\sigma) \neq 0$), then there exists an irreducible subquotient $\pi'$ of $D_{\rho\abs^x}^{(k)}(\pi)$ (resp. $\sigma'$ of $L_{\rho\abs^x}^{(k)}(\sigma)$) such that
    \[ \pi \hookrightarrow (\rho\abs^x)^{k} \rtimes \pi'\qquad(\text{resp. }  \sigma \hookrightarrow (\rho\abs^x)^{k} \rtimes \sigma'). \]
    \item [(ii)] Leibniz rule: In the Grothendieck group, we have
\begin{equation}\label{eq:Leibniz}
D^{(k)}_{\rho\abs^z}(\tau\rtimes\pi)=\sum_{k_1+k_2+k_3=k} c_{(k_1,k_2,k_3)} L^{(k_1)}_{\rho\abs^z}\circ R^{(k_2)}_{\rho\abs^{-z}}(\tau)\rtimes D^{(k_3)}_{\rho\abs^z}(\pi),
\end{equation}
where the $c_{(k_1,k_2,k_3)}$ are positive integers. Similarly, for representations $\tau,\tau'$ of general linear groups,
\[ L^{(k)}_{\rho\abs^z}(\tau\times\tau')=\sum_{k_1+k_2=k}c_{(k_1,k_2)}L^{(k_1)}_{\rho\abs^z}(\tau)\times L^{(k_2)}_{\rho\abs^z}(\tau')\]
for some positive integers $c_{(k_1,k_2)}$, and likewise for right derivatives.
\item[(iii)] Derivatives of Steinberg representations:
\begin{align}\label{eq der St}
    L_{\rho\abs^{z}}(\Delta_{\rho}[x,y])= \begin{cases}  
\Delta_{\rho}[x-1,y] & \text{ if } z=x, \\
0 & \text{ otherwise,}
\end{cases}\ \ \ \ R_{\rho\abs^{z}}(\Delta_{\rho}[x,y])= \begin{cases}  
    \Delta_{\rho}[x,y+1] & \text{ if } z=y, \\
    0 & \text{ otherwise.}
    \end{cases}
\end{align}
\end{enumerate}
We remark that the Leibniz rule is a consequence of Tadi{\'c}'s formula \cite{Tad95} (and of the geometric lemma \cite{Zel80} for general linear groups).

\subsection{Unitarity}\label{subsec:unitarity}

In this paper, we only use the following standard facts about unitarity.
\begin{enumerate}[label=(U\arabic*)]
\item\label{U1} If $\tau$ is an irreducible unitary representation of a general linear group and $\pi$ is irreducible unitary, then $\tau\rtimes\pi$ is unitary, hence semisimple; in particular every irreducible subquotient of $\tau\rtimes\pi$ is a subrepresentation, and is unitary.
\item\label{U2} (Complementary series) Let $\{\Pi_s\}_{s\in[s_0,s_1)}$ be a continuous family of irreducible Hermitian representations obtained by varying the exponent of an induction. If $\Pi_{s_0}$ is unitary, then so is $\Pi_s$ for all $s\in[s_0,s_1)$, and every irreducible subquotient of the limit $\Pi_{s_1}$ is unitary.
\item\label{U3} The generalized Speh representations $u_{\rho}(a,b)$ (defined in \eqref{eq:def Speh} below) with $\rho$ unitary are unitary \cite{Tad86}.
\end{enumerate}

\section{Generalized Speh representations}\label{subsec:Speh}

A \emph{generalized Speh representation}  $u$ of a general linear group is an irreducible representation of the form
\begin{align}\label{eq:def Speh}
    u=u_\rho(a,b):=L\big(\St(\rho,a)\abs^{\frac{1-b}{2}},\ \St(\rho,a)\abs^{\frac{3-b}{2}},\ \dots,\ \St(\rho,a)\abs^{\frac{b-1}{2}}\big),
\end{align}
where $\rho$ is a unitary supercuspidal representation of a general linear group and $a,b\in\Z_{>0}$.
We call $\St(\rho,a)\abs^{\frac{1-b}{2}}$ (resp.\ $\St(\rho,a)\abs^{\frac{b-1}{2}}$) the lowest (resp.\ highest) segment of $u$.  A twist $u_\rho(a,b)\abs^t$ is called an \emph{essentially generalized Speh representation}.

\begin{notation}\label{not:UDelta}\ 
\begin{enumerate}
    \item For a generalized Speh representation $u=u_\rho(a,b)$, we set $\widetilde{u}:=u_\rho(a,b-2)$, which is obtained from $u$ by removing its lowest and highest segments.
    \item For a segment of the form $\Delta=\St(\rho,a)\abs^{\frac{1-b}{2}}$ with $b\in\Z_{\geq1}$ and $\rho$ self-dual, set
\[
u_\Delta:=u_\rho(a,b),\qquad \widetilde{u}_\Delta:=u_\rho(a,b-2),
\]
where $\widetilde{u}_\Delta$ is the trivial representation of $\GL_0(F)$ if $b\leq 2$. Thus $\Delta$ is the lowest segment of $u_\Delta$, and $\widetilde{u}_\Delta=u_{\Delta\abs^{1}}$ when $b\geq 3$.
\end{enumerate}

\end{notation}

\begin{lemma}\label{lem:socle}
Let $\Delta=\St(\rho,a)\abs^{\frac{1-b}{2}}$ with $b\geq2$ and $\rho$ self-dual. Then
\[
\soc\big(\Delta\times {\widetilde{u}_\Delta}\times\Delta^\vee\big)=u_\Delta,\qquad \soc\big(\Delta\times {\widetilde{u}_\Delta}\big)=u_\rho(a,b-1)\abs^{-\frac12}.
\]
\end{lemma}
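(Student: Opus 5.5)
We work entirely inside $\GL$ and use Zelevinsky--Langlands theory, with the derivative tools of \S\ref{subsec:derivatives}. Write $\Delta_j:=\St(\rho,a)\abs^{\frac{2j-1-b}{2}}$ for $j=1,\dots,b$, so that $\Delta=\Delta_1$, $\Delta^\vee=\Delta_b$ (since $\rho$ is self-dual), and $\widetilde u_\Delta=L(\Delta_2,\dots,\Delta_{b-1})$. Note that $u_\rho(a,b-1)\abs^{-1/2}=L(\Delta_1,\dots,\Delta_{b-1})$.

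The first step is to show that the target lies in the socle. Since $\widetilde u_\Delta\hookrightarrow \Delta_2\times\cdots\times\Delta_{b-1}$, we get
\[\Delta_1\times\widetilde u_\Delta\times\Delta_b\hookrightarrow \Delta_1\times\Delta_2\times\cdots\times\Delta_b=M(u_\Delta),\]
and $M(u_\Delta)$ has $u_\Delta$ as its unique irreducible subrepresentation. Hence every irreducible subrepresentation of $\Delta\times\widetilde u_\Delta\times\Delta^\vee$ is $u_\Delta$, and it occurs with multiplicity one because $u_\Delta$ appears only once in $M(u_\Delta)$. It therefore suffices to show that the socle is nonzero, which is automatic. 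The same argument for $\Delta_1\times\widetilde u_\Delta\hookrightarrow\Delta_1\times\cdots\times\Delta_{b-1}$ gives the second statement. (If $b=2$, $\widetilde u_\Delta$ is trivial and $\Delta_1\times\Delta_2=M(L(\Delta_1,\Delta_2))$ directly.)

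The one point to check carefully is that the embedding of products is legitimate. Parabolic induction is exact, so $\widetilde u_\Delta\hookrightarrow \Delta_2\times\cdots\times\Delta_{b-1}$ does induce an injection after inducing with $\Delta_1$ on the left and $\Delta_b$ on the right. We also need that the standard module with exponents in increasing order, $e(\Delta_1)<\cdots<e(\Delta_b)$, has a unique irreducible subrepresentation, namely the Langlands quotient in the convention of \S\ref{subsec:Langlands}. This is exactly the convention fixed there ($\pi\hookrightarrow\Delta_1\times\cdots\times\Delta_f$ as unique irreducible subrepresentation when $e(\Delta_1)\le\cdots\le e(\Delta_f)$). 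Multiplicity one of $L(\cdot)$ in its standard module is standard (\cite{Zel80}). Thus no step is a genuine obstacle. The only possible subtlety is that "socle $=u_\Delta$" should mean the socle is irreducible, and this follows because a direct sum $u_\Delta^{\oplus 2}$ would embed into $M(u_\Delta)$, contradicting the uniqueness and multiplicity one of the subrepresentation there.
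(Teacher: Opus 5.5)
Your proposal is correct and follows the paper's proof. Like the paper, you embed $\widetilde{u}_\Delta$ into its standard module and induce, giving $\Delta\times\widetilde u_\Delta\times\Delta^\vee\hookrightarrow M(u_\Delta)$ and $\Delta\times\widetilde u_\Delta\hookrightarrow M(u_\rho(a,b-1)\abs^{-1/2})$, and then use that each standard module has a unique irreducible subrepresentation. Your separate multiplicity-one remark is not needed, since that uniqueness already forces the socle to be irreducible.
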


\begin{proof}
We have injections
\begin{align*}
    \Delta\times {\widetilde{u}_\Delta}\times\Delta^\vee \hookrightarrow \Delta\times M({\widetilde{u}_\Delta})\times\Delta^\vee= M(u_{\Delta}),
\end{align*}
and
\begin{align*}
   \Delta\times {\widetilde{u}_\Delta} \hookrightarrow \Delta\times M({\widetilde{u}_\Delta})= M(u_\rho(a,b-1)\abs^{-\frac12}).
\end{align*}
Since the right hand sides have a unique irreducible subrepresentation $u_{\Delta}$ and $u_\rho(a,b-1)\abs^{-\frac12}$ respectively, the conclusion follows.
\end{proof}

\subsection{Consecutive derivatives of generalized Speh representations}
In this subsection, we prove the following constraint on consecutive derivatives of generalized Speh representations.
\begin{lemma}\label{lem:consecutive der Speh}
    Let $u= u_{\rho}(a,b)$ be a generalized Speh representation, and write the lowest segment of $u$ as  
    \[ \Delta_{\rho}[\alpha, \beta]:= \Delta_{\rho}[ \half{a-1}+\half{1-b}, \half{1-a}+ \half{1-b}].\]    
    Suppose that 
    \[  L_{\rho\abs^{z_r}} \circ \cdots \circ L_{\rho\abs^{z_1}} (u) \neq 0 \ \ \ (\text{resp.  } \ \ R_{\rho\abs^{-z_r}} \circ \cdots \circ {R}_{\rho\abs^{-z_1}} (u) \neq 0 ),\]
    where $z_1 \geq \cdots \geq z_r$. Then we must have 
    \begin{itemize}
        \item [(a)]$z_1= \alpha$, and
        \item [(b)]$z_i= z_1-i+1$ for $1 \leq i \leq r$, and
        \item [(c)]$z_r \geq \beta $.
    \end{itemize}     
\end{lemma}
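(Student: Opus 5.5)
We argue using two different embeddings of $u=u_\rho(a,b)$ into induced representations, together with the Leibniz rule for left derivatives on general linear groups. Jacquet functors are exact, so if $u\hookrightarrow \Sigma$ and $L_{\rho\abs^{z_r}}\circ\cdots\circ L_{\rho\abs^{z_1}}(u)\neq0$, then the same composite is nonzero on $\Sigma$. The conditions (a)--(c) will then come from computing the composite on $\Sigma$ through the Leibniz rule and the formula \eqref{eq der St}.

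\emph{First embedding (rows).} Set $u':=u_\rho(a,b-1)\abs^{\frac12}$; its lowest segment is $\Delta\abs^{1}$, where $\Delta=\Delta_\rho[\alpha,\beta]$. I claim $u\hookrightarrow \Delta\times u'$. To see this, note that $\Delta\times u'\hookrightarrow \Delta\times M(u')=M(u)$. Since $M(u)$ has unique irreducible subrepresentation $u$, it suffices to check that $u$ is a subquotient of $\Delta\times u'$, and this holds by the Langlands data, exactly as in the proof of Lemma~\ref{lem:socle}.

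We then prove the statement by induction on $b$. Applying the Leibniz rule, each $L_{\rho\abs^{z}}$ acts either on the $\Delta$-factor or on the $u'$-factor.
\begin{itemize}
\item On the $\Delta$-factor it only acts at its current top exponent, by \eqref{eq der St}.
\item On the $u'$-factor, by the induction hypothesis (a), the first derivative that hits it has exponent $\alpha+1$.
\end{itemize}
Hence $z_1\in\{\alpha,\alpha+1\}$. Suppose $z_1=\alpha$. Then every later $z_i\le\alpha<\alpha+1$, so no later derivative can hit $u'$. All derivatives therefore act on $\Delta$, which forces $z_i=\alpha-i+1$ and $z_r\geq\beta$ (the segment has only $\alpha-\beta+1$ exponents). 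This gives (b) and (c). For $b=1$, $u=\Delta$, and the claim is immediate from \eqref{eq der St}.

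\emph{Second embedding (columns), to exclude $z_1=\alpha+1$.} Write $u$ as the unique subrepresentation of a product of $a$ ``Zelevinsky columns'' $Z_1\times\cdots\times Z_a$. Here $Z_j$ is the essentially Speh-type representation of $\GL$ with cuspidal support $\rho\abs^{\alpha-j+1},\dots,\rho\abs^{\alpha-j+b}$, whose only left derivative is at its minimal exponent $\alpha-j+1$. This is the Zelevinsky/M{\oe}glin--Waldspurger dual description of $u_\rho(a,b)$. Alternatively, one can obtain it by applying the Zelevinsky involution to the analogous row embedding for $u_\rho(b,a)$. By the Leibniz rule, a nonzero first derivative $L_{\rho\abs^{z_1}}(u)$ forces $z_1\in\{\alpha,\alpha-1,\dots,\beta\}$. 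Combined with $z_1\in\{\alpha,\alpha+1\}$, this gives $z_1=\alpha$, which is (a). The induction is then complete.

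The right-derivative statement follows by applying the contragredient, or by using the embedding $u\hookrightarrow u_\rho(a,b-1)\abs^{-\frac12}\times\Delta'$, where $\Delta'$ is the highest segment; this embedding is symmetric to the one above. The relevant relation is $u_\rho(a,b)^\vee\cong u_{\rho^\vee}(a,b)$, which turns right $\rho\abs^{-z}$-derivatives into left $\rho^\vee\abs^{z}$-derivatives. The lowest segment of $u^\vee$ has the same $\alpha$ and $\beta$, because $b$ enters symmetrically.

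The main obstacle is justifying the column embedding $u\hookrightarrow Z_1\times\cdots\times Z_a$ and identifying the left derivatives of each $Z_j$. I would try to get both from the Zelevinsky involution, which exchanges $u_\rho(a,b)$ and $u_\rho(b,a)$ and left derivatives with left derivatives (at the same exponents), and then reuse the row argument.
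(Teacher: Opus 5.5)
Your proof is correct. For (b) and (c) it is the paper's argument: embed $u\hookrightarrow\Delta\times u_\rho(a,b-1)\abs^{1/2}$ and note that the second factor can first be hit only at exponent $\alpha+1>z_1\geq z_i$. Hence every derivative falls on $\Delta$, and \eqref{eq der St} finishes the proof.

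The genuine difference is in (a). The paper simply quotes the Kret--Lapid description of Jacquet modules of ladder representations. You instead squeeze $z_1$ between two realizations of $u$. The Langlands (row) realization gives $z_1\geq\alpha$, and the Zelevinsky (column) realization $u\hookrightarrow Z_1\times\cdots\times Z_a$ gives $\beta\leq z_1\leq\alpha$. Your route is more self-contained: it needs only the Leibniz rule, \eqref{eq der St} and its analogue for the $Z_j$, and the classical identification of $u_\rho(a,b)$ in the Zelevinsky classification. The paper's route is a one-line appeal to a stronger result that computes the whole Jacquet module.

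Two small remarks.
\begin{enumerate}
\item The induction on $b$ is unnecessary for (a). Since $u\hookrightarrow M(u)=\Delta\times\Delta\abs^{1}\times\cdots\times\Delta\abs^{b-1}$, the Leibniz rule already gives $z_1\in\{\alpha,\dots,\alpha+b-1\}$.
\item Your fallback justification via the Zelevinsky involution is misstated. The involution intertwines the Jacquet functor for $P_{(k,n-k)}$ with that for $P_{(n-k,k)}$, swapping the factors, so it exchanges left and right derivatives at the same exponent. For example, $\St(\rho,2)$ has its only left derivative at $\rho\abs^{1/2}$, while its dual has its only left derivative at $\rho\abs^{-1/2}$. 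You do not need that property, however. Derivatives only see semisimplified Jacquet modules, so it is enough that $u\leq Z_1\times\cdots\times Z_a$ in the Grothendieck group. This follows by applying the involution, which is a ring homomorphism, to $u_\rho(b,a)\leq M(u_\rho(b,a))$, using that it sends $u_\rho(b,a)$ to $u_\rho(a,b)$ and each Steinberg segment to the corresponding Zelevinsky segment.
\end{enumerate}
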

\begin{proof}
We show the assertion for the left derivatives, and omit the verification for the right derivatives since it is similar.
    Part (a) follows from the formula for Jacquet modules of ladder representations \cite{KL12}, since $u$ is a ladder representation. For Part (b),   
    we have an injection
    \[ u \hookrightarrow \Delta_{\rho}[\alpha, \beta] \times u_\rho(a,b-1)\abs^{1/2}.\]
    By Part (a), we see that $u_\rho(a,b-1)\abs^{1/2}$ is left $\rho\abs^{z}$-reduced unless $z= z_1+1$. Since $z_i \leq z_1< z_1+1$, the Leibniz rule gives
    \begin{align*}
        0 \neq L_{\rho\abs^{z_r}} \circ \cdots \circ L_{\rho\abs^{z_1}} (u) & \leq L_{\rho\abs^{z_r}} \circ \cdots \circ L_{\rho\abs^{z_1}} (\Delta_{\rho}[\alpha, \beta] \times u_\rho(a,b-1)\abs^{1/2}) \\
        &=  L_{\rho\abs^{z_r}} \circ \cdots \circ L_{\rho\abs^{z_1}} (\Delta_{\rho}[\alpha, \beta]) \times u_\rho(a,b-1)\abs^{1/2}.
    \end{align*}
Then the desired assertion follows from applying \eqref{eq der St} repeatedly.    
\end{proof}

\subsection{Product of essentially generalized Speh representations}

To an essentially generalized Speh representation $u=u_{\rho}(a,b)\abs^s$, we associate a quadruple
\[q(u)=q(u_{\rho}(a,b)\abs^s )=(-A+s,B+s,-B+s,A+s), \]
where $A=\frac{a+b}{2}-1$ and $B=\frac{a-b}{2}$. For quadruples we write $q<_{\mathrm{strong}}q'$ if each entry of $q$ is strictly smaller than the corresponding entry of $q'$, and for essentially Speh representations $u,u'$ we write $u<_{\mathrm{strong}}u'$ if $q(u)<_{\mathrm{strong}}q(u')$. We use the following irreducibility criterion of Tadi\'c.

\begin{thm}[{\cite[Theorem~1.1]{Tad14}}]\label{thm:Tadic}
Let $u=u_\rho(a,b)\abs^s$ and $u'=u_{\rho'}(a',b')\abs^{s'}$ be essentially Speh representations. Then $u\times u'$ is reducible if and only if
\begin{enumerate}[label=(\roman*)]
\item $\rho\cong\rho'$, the exponents of $u$ and $u'$ lie in the same coset modulo $\Z$, and the union of the supports of $u$ and $u'$ is a segment, and
\item $q(u)<_{\mathrm{strong}}q(u')$ or $q(u')<_{\mathrm{strong}}q(u)$.
\end{enumerate}
If $u\times u'$ is irreducible, then $u\times u'\cong u'\times u$.
\end{thm}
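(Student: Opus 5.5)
Since $u$ and $u'$ are irreducible, the final assertion is formal. In the Grothendieck group $u\times u'$ and $u'\times u$ have the same image, so if one of them is irreducible both are, and they are isomorphic. The content is therefore the reducibility criterion, and I would prove it by reducing to the socle criterion for products with a square-irreducible factor (Lapid--M{\'i}nguez). Every generalized Speh representation is a ladder representation, hence $\square$-irreducible. For a $\square$-irreducible $u$ and any irreducible $u'$, the product $u\times u'$ has a unique irreducible subrepresentation, occurring with multiplicity one. Moreover, $u\times u'$ is irreducible if and only if $\soc(u\times u')\cong\soc(u'\times u)$. So the plan is to compute both socles explicitly and compare them.

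First I dispose of the cases where (i) fails. If $\rho\not\cong\rho'$, or the exponents lie in different cosets modulo $\Z$, then no segment of $u$ is linked to any segment of $u'$. Irreducibility then follows from Zelevinsky's theory by separating cuspidal lines. If the union of the supports is not a segment, the supports are separated by a gap. I would then show irreducibility by checking that $u\times u'$ and $u'\times u$ have the same socle; this is immediate from the ladder combinatorics below, since no matching between the two ladders is possible. I would also handle this case via the Zelevinsky (M{\oe}glin--Waldspurger) involution: it exchanges $u_\rho(a,b)$ with $u_\rho(b,a)$, preserves irreducibility of products, and acts on $q(u)$ by permuting its entries in a controlled way. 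This roughly halves the casework by allowing us to assume, say, $a\ge a'$.

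For the main case, where the supports lie on one line and their union is a segment, I would use the explicit description of the socle of a product of two ladders (Lapid--M{\'i}nguez, via their matching function between the segments of $u$ and of $u'$). The socle is obtained from the multisegment $\mathfrak m(u)+\mathfrak m(u')$ by a sequence of elementary operations, swapping $\Delta,\Delta'$ with $\Delta\cup\Delta', \Delta\cap\Delta'$ along a maximal matching of linked segments. I would then translate the existence of a nontrivial matching in one order but not the other into inequalities on the endpoints of the highest and lowest segments. The four inequalities among the entries $(-A+s,B+s,-B+s,A+s)$ correspond exactly to comparisons of the beginnings and ends of the first and last segments of the two ladders. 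The goal is to show that the two socles differ precisely when all four inequalities are strict in the same direction, i.e.\ when $q(u)<_{\mathrm{strong}}q(u')$ or $q(u')<_{\mathrm{strong}}q(u)$.

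The main obstacle is this last combinatorial comparison: showing that the matching conditions collapse exactly to strong comparability of the quadruples. For the ``if'' direction (strong comparability implies reducibility), I would first try a direct argument. Assume $q(u)<_{\mathrm{strong}}q(u')$. Use the Leibniz rule and Lemma~\ref{lem:consecutive der Speh} to exhibit a derivative sequence that is nonzero on $\soc(u\times u')$ but vanishes on $\soc(u'\times u)$. The strict inequalities guarantee that the relevant lowest or highest segment of $u'$ can be peeled off first only in one of the two orders. For the converse I would rely on the matching computation, using the Zelevinsky involution to reduce to a normalized configuration.
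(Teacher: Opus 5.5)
The paper gives no proof of this statement. It is quoted from \cite{Tad14} and used only through its consequences, Lemmas~\ref{lem:Speh-irr}--\ref{lem:Delta uj+}, so your proposal has to stand on its own.

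\textbf{What is sound.} Your preliminary steps are correct.
\begin{enumerate}
\item The last assertion follows from $[u\times u']=[u'\times u]$ in the Grothendieck group.
\item Generalized Speh representations are $\square$-irreducible; more simply, $u\times u$ is a twist of a unitary induction, hence irreducible by Bernstein.
\item The Lapid--M{\'i}nguez socle criterion applies.
\item Supports on different cuspidal lines give irreducible products.
\item The Zelevinsky involution sends $u_\rho(a,b)\abs^s$ to $u_\rho(b,a)\abs^s$. This swaps the two middle entries of $q$ (since $B\mapsto -B$), so conditions (i) and (ii) are invariant under it.
\end{enumerate}

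\textbf{The missing step.} None of this touches the content of the theorem. That content lies entirely in the step you label ``the main obstacle'' and do not carry out. Under (i), you must show that the Lapid--M{\'i}nguez combinatorial condition for two rectangular ladders holds in both orders exactly when $q(u)$ and $q(u')$ are not strongly comparable.

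The socle description you invoke (``swapping $\Delta,\Delta'$ with $\Delta\cup\Delta',\Delta\cap\Delta'$ along a maximal matching'') is too imprecise to use as a black box. The natural input is Lapid--M{\'i}nguez's irreducibility criterion for a product of two ladders, which is a condition on matchings between linked segments. After invoking it, the whole proof is precisely the combinatorial verification you have not done.

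Note also that in the \emph{if} direction, (i) must be part of the hypothesis. For example, $\rho\times\rho\abs^5$ has strongly comparable quadruples but is irreducible.

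\textbf{The derivative fallback fails.} The argument you propose for the \emph{if} direction cannot work as described. Derivatives are defined through semisimplified Jacquet modules, and $u\times u'$ and $u'\times u$ have the same semisimplification. So for any composition $D$ of derivatives, the Leibniz rule together with Lemma~\ref{lem:consecutive der Speh} computes one and the same object, $D(u\times u')=D(u'\times u)$.

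This is an upper bound for $D$ of both socles simultaneously. It is nonzero as soon as $D(\soc(u\times u'))\neq 0$. Hence it can never prove $D(\soc(u'\times u))=0$.

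In the paper, the analogous vanishing $D(\tau)=0$ works only because $\tau$ is itself an irreducible induced representation, so the Leibniz rule computes $D(\tau)$ exactly. To separate the two socles you would need an independent identification of $\soc(u'\times u)$, for instance its Langlands data. That is again the matching computation you have deferred.
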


In this subsection, we list some direct consequences of the theorem.

\begin{lemma}\label{lem:Speh-irr}
Let $\rho$ be self-dual and $a,b,c,d\geq1$.
\begin{enumerate}[label=(\alph*)]
\item If $t,t'\in\R$ satisfy $|t-t'|<1$, then $u_\rho(a,b)\abs^{t}\times u_\rho(c,d)\abs^{t'}$ is irreducible.
\item $u_\rho(a,b)\abs^{-\frac12}\times u_\rho(c,d)\abs^{\frac12}$ is reducible if and only if $(c,d)=(a,b)$.
\end{enumerate}
In particular, $u_\rho(a,b)\abs^{-\frac12}\times u_\rho(c,d)\abs^{s}$ is irreducible for $s\in\{0,-\tfrac12\}$.
\end{lemma}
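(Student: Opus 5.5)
Both parts reduce to Theorem~\ref{thm:Tadic}, using only condition (ii) about strong comparison of quadruples. Write $u=u_\rho(a,b)\abs^{t}$ and $u'=u_\rho(c,d)\abs^{t'}$, and put $A=\frac{a+b}{2}-1$, $B=\frac{a-b}{2}$, $C=\frac{c+d}{2}-1$, $D=\frac{c-d}{2}$. Then
\[
q(u)=(-A+t,\,B+t,\,-B+t,\,A+t),\qquad q(u')=(-C+t',\,D+t',\,-D+t',\,C+t').
\]
The key observation is that $q(u)<_{\mathrm{strong}}q(u')$ is equivalent to the two conditions $-A+t<-C+t'$ and $A+t<C+t'$, together with $B+t<D+t'$ and $-B+t<-D+t'$. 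Adding the first pair gives $t<t'$. Adding the second pair gives $t<t'$ as well, but we also need more information. Subtracting shows $|A-C|<t'-t$ and $|B-D|<t'-t$.

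For part (a), suppose $|t-t'|<1$. The numbers $A-C$ and $B-D$ lie in $\frac12\Z$, and $(A-C)+(B-D)=a-c\in\Z$ and $(A-C)-(B-D)=d-b\in\Z$. So either both differences are integers or both are half-integers. If both are integers, the inequalities force $A=C$ and $B=D$, i.e.\ $(a,b)=(c,d)$. That configuration still satisfies $q(u)<_{\mathrm{strong}}q(u')$ whenever $t<t'$. I therefore need condition (i) to exclude it: the supports must have exponents in the same coset, so $t-t'\in\Z$, which together with $|t-t'|<1$ and $t\ne t'$ is impossible. If both differences are half-integers, we need $\frac12<t'-t<1$, and again $t-t'\notin\Z$ violates (i). In every case (i) or (ii) fails, so the product is irreducible by Theorem~\ref{thm:Tadic}.

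For part (b), take $t=-\frac12$ and $t'=\frac12$, so $t'-t=1$ and (i)'s coset condition holds. Strong comparison requires $|A-C|<1$ and $|B-D|<1$, and the reverse order is impossible. By the same parity argument, both differences are half-integers of absolute value $\frac12$ or both are zero. The half-integer case means $a-c=\pm1$ or $0$ with $d-b$ odd, which has to be analysed. It looks like it could produce reducibility for $(c,d)\ne(a,b)$, e.g.\ $A-C=B-D=\frac12$, i.e.\ $a=c+1$, $b=d$. This is the main obstacle. Here, however, $(A-C)-(B-D)=d-b$ must be an integer: with both differences equal to $\pm\frac12$, the quantity $d-b$ lies in $\{0,\pm1\}$ and $a-c\in\{0,\pm1\}$, with $a-c+d-b$ even. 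So both differences half-integers forces $(a-c)$ and $(d-b)$ to be odd, not even. This is a contradiction only if I recheck: $A-C=\frac{(a-c)+(b-d)}{2}$, and if this equals $\pm\frac12$ then $a-c+b-d$ is odd, while $B-D=\frac{(a-c)-(b-d)}{2}$ has the same parity. So the case is genuine.

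It is then excluded by (i): the supports must have union a segment \emph{with matching cosets}. With $t'-t=1$ and $a+b$ versus $c+d$ of different parity, the exponents of $u$ and $u'$ differ by a half-integer coset, since the support exponents lie in $\frac{a+b}{2}+t+\Z$. So (i) fails. Hence reducibility occurs exactly when $A=C$ and $B=D$, i.e.\ $(c,d)=(a,b)$. In that case the supports are adjacent and overlap, so (i) holds and the product is reducible.

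Finally, the ``in particular'' follows from (a) applied with $t=-\frac12$ and $t'\in\{0,-\frac12\}$, since then $|t-t'|<1$.
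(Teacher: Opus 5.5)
Your proof is correct and follows the paper's route: both arguments apply Tadi\'c's criterion to the quadruples. The paper collapses your integer/half-integer case split by noting that the coset condition in (i), together with (ii), makes every entry of $q(u')-q(u)$ a positive integer; adding the first and last entries then gives $2(t'-t)\geq 2$ for (a), and with $t'-t=1$ forces $A=C$, $B=D$ for (b).

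Fix two slips in the write-up. In the half-integer case of (a) the coset condition reads $t'-t\in\frac12+\Z$, not $t-t'\in\Z$, so (i) fails because $t'-t\in(\frac12,1)$ lies in neither coset. In (b), your opening claim that the coset condition holds is true only when $a+b\equiv c+d \pmod 2$, as you later realize; remove that claim and the erroneous parity remarks before ``the case is genuine''.
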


\begin{proof}
Let $A=\frac{a+b}{2}-1$, $B=\frac{a-b}{2}$, $C=\frac{c+d}{2}-1$, $D=\frac{c-d}{2}$, and let $\delta:=t'-t$. Then the coordinate-wise difference of $q\big(u_\rho(c,d)\abs^{t'}\big)$ and $q\big(u_\rho(a,b)\abs^{t}\big)$ can be expressed as
\[
q=\big(A-C+\delta,\ D-B+\delta,\ B-D+\delta,\ C-A+\delta\big).
\]
Suppose that $u_\rho(c,d)\abs^{t'} \times u_\rho(a,b)\abs^{t}$ is reducible. By symmetry and Theorem \ref{thm:Tadic}, we may assume that the entries of $q$ are all positive integers.

For Part (a), we have $A-C+\delta \geq 1$ and $C-A+\delta \geq 1$. Therefore, $2 \delta \geq 2$, a contradiction. For Part (b), we have $\delta=1$. Then $A-C +\delta \geq 1$ and $C-A +\delta \geq 1$ imply that $A=C$. Similarly, we get $B=D$. Therefore, we must have $(a,b)=(c,d)$. This completes the proof of the lemma.
\end{proof}

\begin{lemma}\label{lem:half-commute}
Consider essentially generalized Speh representations $u= u_{\rho}(a,b)\lvert \cdot \rvert^{s}$ and $u'= u_{\rho}(c,d)\lvert \cdot \rvert^{t}$. Suppose that the centers of the highest segments of $u$ and of $u'$ are either $-1$ or $-\half{1}$. Namely, $ \{ \half{b-1}+s, \half{d-1}+t \} \subseteq \{-1, \half{-1}\}$. Then $u\times u'$ is irreducible. In particular $u\times u'=u'\times u$.
\end{lemma}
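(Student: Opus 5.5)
The plan is to apply Tadi\'c's criterion (Theorem~\ref{thm:Tadic}) directly, after rewriting the quadruple $q(u)$ in terms of the segments of $u$. Write $u=u_\rho(a,b)\abs^{s}$ and set $\alpha:=\half{a-1}$. Let $h:=\half{b-1}+s$ be the center of the highest segment of $u$ and $\ell:=\half{1-b}+s$ the center of its lowest segment. With $A=\half{a+b}-1$ and $B=\half{a-b}$, a short computation gives
\[
q(u)=(-A+s,\;B+s,\;-B+s,\;A+s)=(\ell-\alpha,\;\ell+\alpha,\;h-\alpha,\;h+\alpha).
\]
Thus the last two entries of $q(u)$ are exactly $y$ and $x$ of the highest segment $\St(\rho,a)\abs^{h}$. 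Similarly, put $\gamma:=\half{c-1}$ and $h':=\half{d-1}+t$ for $u'$, so the last two entries of $q(u')$ are $h'-\gamma$ and $h'+\gamma$.

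Suppose for contradiction that $u\times u'$ is reducible. By Theorem~\ref{thm:Tadic}(i), all exponents of $u$ and $u'$ lie in one coset of $\R/\Z$. Every entry of $q(u)$ and $q(u')$ is an endpoint of a segment of $u$ or $u'$, so all these entries are mutually congruent modulo $\Z$. By Theorem~\ref{thm:Tadic}(ii) and symmetry in $u,u'$, we may assume $q(u)<_{\mathrm{strong}}q(u')$. Comparing the third and fourth entries then gives that both
\[
(h'-\gamma)-(h-\alpha)\quad\text{and}\quad (h'+\gamma)-(h+\alpha)
\]
are positive integers. Adding them shows $2(h'-h)\ge 2$, i.e.\ $h'-h\ge1$.

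This contradicts the hypothesis $h,h'\in\{-1,-\half{1}\}$, which forces $|h'-h|\le\half{1}$. Hence $u\times u'$ is irreducible, and the final statement of Theorem~\ref{thm:Tadic} gives $u\times u'\cong u'\times u$.

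The only step that needs care is confirming that the integrality in condition (i) applies to the differences of the third and fourth entries of the quadruples. This holds because these entries are the endpoints $y$ and $x$ of the highest segments, and all segment endpoints lie in the common coset; everything else is bookkeeping.
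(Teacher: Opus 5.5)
Your proof is correct and follows essentially the same route as the paper. Both assume reducibility, use Tadi\'c's criterion to get integral strict inequalities between the last two entries of the quadruples (which sum to twice the centers of the highest segments), and contradict the assumption that these centers lie in $\{-1,-\frac{1}{2}\}$; your explicit check that the relevant differences are integers is a detail the paper leaves implicit.
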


\begin{proof}
Write $q(u)=(z,w,x,y)$ and $q(u')=(z',w',x',y')$. Suppose the contrary that $u\times u'$ is reducible. Then by Theorem~\ref{thm:Tadic}, we may assume that $x\geq x'+1$ and $y\geq y'+1$. Then
\[  2(\half{b-1}+s)= x+y \geq x'+y'+2= 2(\half{d-1}+t) +2 \geq  0,  \]
a contradiction.
\end{proof}

\begin{lemma}\label{lem:Delta uj+}
    Let $\Delta=\Delta_{\rho}[x,y]$ be a segment with $e(\Delta)\leq -\half{1}$ and let $u= u_{\rho}(a,b)\abs^s$ be an essentially generalized Speh representation whose highest segment has center $-1$ or $-\half{1}$. Write $\Delta_u:=\Delta_{\rho}[\half{a-b}+s, -\half{a+b}+1+s]$ for the lowest segment of $u$, and write $u^{\flat}:= u_{\rho}(a,b-1)\abs^{s+\half{1}}$, so that $u \hookrightarrow \Delta_{u} \times u^{\flat}$.
\begin{enumerate}
    \item [(a)] Suppose that $e(\Delta_u) = e(\Delta)$. Then we have an injection
    \[ \Delta \times u \hookrightarrow  \Delta_{u}  \times \Delta \times u^{\flat}.\]
    \item [(b)] Suppose that $e(\Delta_u) < e(\Delta)$. Then we have
   \[ \Delta \times u = u \times \Delta \hookrightarrow  \Delta_{u}   \times u^{\flat}\times \Delta=\Delta_{u}   \times \Delta\times u^{\flat}.\]
\end{enumerate}
\end{lemma}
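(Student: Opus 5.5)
The plan is to start from the injection $u\hookrightarrow \Delta_u\times u^{\flat}$, which is the first step in the proof of Lemma~\ref{lem:consecutive der Speh}. It comes from $M(u)=\Delta_u\times M(u^\flat)$ and the fact that $u^\flat$ is the socle of $M(u^\flat)$, so $u$ embeds in $\Delta_u\times u^\flat$ (compare Lemma~\ref{lem:socle}). Tensoring on the left with $\Delta$ gives
\[ \Delta\times u\hookrightarrow \Delta\times\Delta_u\times u^\flat. \]
In both parts the task is then to move $\Delta$ past $\Delta_u$, and in part (b) also past $u$ and $u^\flat$, using irreducibility of the relevant products.

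For (a), $e(\Delta_u)=e(\Delta)$, so the two segments share a center. One then contains the other, so they are not linked, and by \cite{Zel80} we have $\Delta\times\Delta_u=\Delta_u\times\Delta$. Substituting into the display above gives $\Delta\times u\hookrightarrow \Delta_u\times\Delta\times u^\flat$.

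For (b), I would first show $\Delta\times u$ is irreducible, by applying Theorem~\ref{thm:Tadic} with $\Delta=u_\rho(a',1)\abs^{e(\Delta)}$ regarded as a Speh representation. Put $c:=\half{b-1}+s\in\{-1,-\half12\}$, the center of the highest segment of $u$. Note that $q(u)=(x_{\min}\text{-ish})$ has entries $-A+s,B+s,-B+s,A+s$, whose first-and-last sum is $2s$ and whose middle sum is $2s$. For $\Delta$ the quadruple is $(y,\,x,\,y,\,x)$ up to ordering conventions, with $x+y=2e(\Delta)$.

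Reducibility requires strong comparability. If $q(\Delta)<_{\mathrm{strong}}q(u)$, then comparing the appropriate entries (as in the proof of Lemma~\ref{lem:half-commute}) gives $2e(\Delta)+2\le$ twice the center of the lowest segment of $u$, i.e. $e(\Delta)+1\le e(\Delta_u)$. This contradicts $e(\Delta_u)<e(\Delta)$. If instead $q(u)<_{\mathrm{strong}}q(\Delta)$, the analogous comparison of the last two coordinates gives $2c+2\le 2e(\Delta)\le -1$, forcing $c\le -\half32$, which contradicts $c\in\{-1,-\half12\}$. So $\Delta\times u=u\times\Delta$.

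Then $u\times\Delta\hookrightarrow\Delta_u\times u^\flat\times\Delta$. It remains to see that $u^\flat\times\Delta$ is irreducible, so that it equals $\Delta\times u^\flat$. Here $u^\flat$ has lowest segment of center $e(\Delta_u)+1$ and highest segment center $c$. I would run the same Tadić comparison. In the case $q(u^\flat)<_{\mathrm{strong}}q(\Delta)$ the contradiction $2c+2\le 2e(\Delta)\le -1$ is unchanged. The case $q(\Delta)<_{\mathrm{strong}}q(u^\flat)$ would require $e(\Delta)+1\le e(\Delta_u)+1$, which contradicts $e(\Delta_u)<e(\Delta)$.

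The main obstacle is the bookkeeping that translates entries of $q$ into the centers of the lowest and highest segments (which coordinate sums give which center). I will verify carefully that the two strong-inequality cases reduce exactly to the inequalities above, including the degenerate case $b=1$, where $u^\flat$ is trivial and the claim is immediate.
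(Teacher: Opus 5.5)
Your proposal is correct and follows essentially the same route as the paper. In (a) you commute $\Delta$ past $\Delta_u$ by non-linkedness, where the paper invokes Theorem~\ref{thm:Tadic} (equivalent for segments). In (b) you rule out both strong comparisons of $q(\Delta)$ with $q(u)$ and with $q(u^\flat)$ via the centers of the lowest and highest segments, exactly as the paper does. One point to make explicit: the integer shifts (your ``$+2$'' and ``$+1$'') come from condition (i) of Theorem~\ref{thm:Tadic}, which forces the compared entries to differ by integers. Without this, the cases $q(u)<_{\mathrm{strong}}q(\Delta)$ with $c=-1$ and $q(\Delta)<_{\mathrm{strong}}q(u^\flat)$ would not give a contradiction.
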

\begin{proof}
For Part (a), Theorem \ref{thm:Tadic} implies $\Delta_u \times \Delta$ is irreducible. Then by Lemma \ref{lem:socle}, we have 
\[ \Delta \times u \hookrightarrow \Delta \times \Delta_u \times u^{\flat}= \Delta_u \times  \Delta  \times u^{\flat}.\]

For Part (b), the injection in the middle again follows from Lemma \ref{lem:socle}, and it remains to prove the two equalities. Write $q(u)=(A,B,C,D)$, so that $q(u^{\flat})=(A+1, B+1, C,D)$; recall that $q(\Delta)=(y,x,y,x)$. By Theorem \ref{thm:Tadic}, if $\Delta\times u$ (resp.\ $\Delta\times u^{\flat}$) is reducible, then the exponents of $\Delta$ and $u$ lie in the same coset modulo $\Z$, and $\Delta$ is either strongly smaller or strongly larger than $u$ (resp.\ $u^{\flat}$).

First, suppose that $\Delta >_{\textrm{strong}} u$ or $\Delta >_{\textrm{strong}} u^{\flat}$. Then $y>C$ and $x>D$, and since $y-C,x-D\in\Z$, we get $x+y\geq C+D+2$. Our assumption on the highest segment of $u$ gives $\half{C+D}\geq -1$, hence $e(\Delta)=\half{x+y}\geq 0$, contradicting $e(\Delta)\leq-\half{1}$.

Next, if $\Delta <_{\textrm{strong}} u$, then $B>x$ and $A>y$, which imply that 
\[e(\Delta_{u})=\half{A+B} > \half{x+y}=e(\Delta),\]
a contradiction. Finally, if $\Delta <_{\textrm{strong}} u^{\flat}$, then $B+1>x$ and $A+1>y$, so $B \geq x$ and $A\geq y$, and hence
\[ e(\Delta_{u})=\half{A+B} \geq \half{x+y}=e(\Delta), \]
which again contradicts $e(\Delta_u)< e(\Delta)$. This completes the proof of the lemma.
\end{proof}
\subsection{Inductions from essentially Arthur type representations}

In the purely bad parity setting, M{\oe}glin showed that Arthur type representations are irreducibly induced from products of generalized Speh representations.

\begin{thm}[{\cite[\S 4.2]{Moe11}}]\label{thm:Moeglin}
Let $\{(\rho_i,a_i,b_i)\}_{i=1}^r$ be triples where $\rho_i$ is a self-dual unitary supercuspidal representation of $\GL_{d_i}(F)$ and $a_i,b_i\in\Z_{>0}$ satisfy $\alpha_{\rho_i}+\frac{a_i+b_i}{2}\notin\Z$. Then
\[
\bigtimes_{i=1}^r u_{\rho_i}(a_i,b_i)\rtimes\one_{G_0}
\]
is irreducible. Every representation of purely bad parity which is of Arthur type is of this form. In particular, representations of purely bad parity and of Arthur type are unitary.
\end{thm}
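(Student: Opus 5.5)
The plan is to deduce everything from Arthur's endoscopic classification \cite{Art13} together with M{\oe}glin's construction of packets in the ``bad parity'' situation. Let $\psi=\bigoplus_{i=1}^r(\rho_i\otimes S_{a_i}\otimes S_{b_i})^{\oplus 2}\oplus\phi_{\one}$. The condition $\alpha_{\rho_i}+\frac{a_i+b_i}{2}\notin\Z$ says exactly that $\rho_i\otimes S_{a_i}\otimes S_{b_i}$ is self-dual of the \emph{opposite} type to $\widehat{G}$. In that case $\psi$ is a legitimate Arthur parameter of $G_n$ only because each such summand occurs with even multiplicity.

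Such paired summands contribute nothing to the component group: their centralizer is an orthogonal (resp.\ symplectic) group of even rank, which is connected, or more precisely contributes no $\Z/2$ factor. Hence $\mathcal{S}_\psi$ is trivial. M{\oe}glin's general reduction for $\psi=\psi_u\oplus\psi_u^\vee\oplus\psi_0$ states that the packet $\Pi_\psi$ consists of the irreducible constituents of $u\rtimes\Pi_{\psi_0}$. More concretely, the stable distribution attached to $\psi$ is the (twisted-endoscopic) transfer of the trace of $\bigtimes_i u_{\rho_i}(a_i,b_i)$ on $\GL$, and this transfer equals the character of the induced representation $\Pi:=\bigtimes_i u_{\rho_i}(a_i,b_i)\rtimes\one$. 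This follows from compatibility of the twisted transfer with parabolic induction, using $\psi_0=\phi_{\one}$.

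The first key step is irreducibility. The representation $\Pi$ is unitarily induced from unitary data by \ref{U3}, so it is semisimple. Its character is the packet sum $\sum_{\pi\in\Pi_\psi}\langle\cdot,\pi\rangle\,\mathrm{tr}\,\pi$. Since $\mathcal{S}_\psi$ is trivial, the character relation pins down a single virtual character. M{\oe}glin's multiplicity-one theorem for packets then forces $\Pi_\psi$ to be a single irreducible representation, with multiplicity one, so $\Pi$ is irreducible.

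The hard point in this step is showing that $\Pi$ has no further constituents cancelling in the stable sum. I would handle this with the positivity of coefficients in M{\oe}glin's construction: every constituent of $\Pi$ appears with coefficient $+1$ in the stable distribution. An alternative, more hands-on route also exists. One checks that $\Hom(\Pi,\Pi)$ is one-dimensional by a Jacquet module computation with Tadi\'c's formula. One uses that the highest derivatives of $\Pi$ with respect to $\rho_i\abs^{\alpha}$ are irreducible, where $\alpha$ is the exponent from Lemma~\ref{lem:consecutive der Speh}. The bad-parity hypothesis ensures that $u_{\rho_i}(a_i,b_i)\abs^{t}\rtimes\one$ never meets a reducibility point $\alpha_{\rho_i}$ that would split it.

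The second key step is exhaustion. Suppose $\pi$ is of purely bad parity and lies in some $\Pi_\psi$. The infinitesimal character of $\pi$ is that of $\psi$. By Definition~\ref{def:pbp}, every $\rho\abs^x$ in the cuspidal support has $x\in\alpha_\rho+\frac12+\Z$, so no summand $\rho\otimes S_a\otimes S_b$ of $\psi$ can be of good parity (that would require $x\in\alpha_\rho+\Z$), and none can be non-self-dual. A non-self-dual summand would force a $\rho\abs^x$ in the support with $\rho$ not self-dual, or with the wrong coset, after one checks via the conjugate-pair structure. The supercuspidal part is trivial by condition~(i). So $\psi$ has the displayed form, and $\pi\cong\Pi$ by the first step.

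Unitarity is then immediate: $\Pi$ is an irreducible representation unitarily induced from the unitary representation $\bigotimes_i u_{\rho_i}(a_i,b_i)\otimes\one$ by \ref{U3}. I expect the main obstacle to be the irreducibility of $\Pi$, that is, the transfer and multiplicity-one input. Everything else is bookkeeping of parities.
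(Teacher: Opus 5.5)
\textbf{There is a gap in the irreducibility step.} The paper gives no argument for this statement; it quotes \cite[\S 4.2]{Moe11}. Its real content is the irreducibility of $\Pi:=\bigtimes_i u_{\rho_i}(a_i,b_i)\rtimes\one$. Your derivation of the other two assertions from that one is fine: Arthur's parabolic reduction identifies $\Pi_\psi$ with the constituents of $\Pi$, the cuspidal-support bookkeeping forces $\psi$ into the displayed shape, and unitarity then follows from \ref{U3}.

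Two small corrections. First, the multiplicity space of a self-dual summand of type opposite to $\widehat G$ carries a symplectic form, so the centralizer factor is $\Sp_m(\BC)$ in both cases, never an even orthogonal group (which would be disconnected). Second, for $\Sp_{2n}$ the good-parity summand $\one\otimes S_1\otimes S_1$ does occur, exactly once.

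Now the gap. Once you know that $\Pi_\psi$ is the set of constituents of $\Pi$ and that $\eta_\psi=\mathrm{tr}\,\Pi$, the character identity with $\mathcal S_\psi=1$ reads $\mathrm{tr}\,\Pi=\sum_{\pi\in\Pi_\psi}\mathrm{tr}\,\pi$. This is a tautology that holds just as well if $\Pi$ has several summands. M{\oe}glin's multiplicity-one theorem says each member of $\Pi_\psi$ occurs once, i.e.\ $\Pi$ is multiplicity free, not that $|\Pi_\psi|=1$. For non-tempered $\psi$ the map $\Pi_\psi\to\widehat{\mathcal S_\psi}$ is in general not injective, so a trivial component group bounds nothing. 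Positivity of the coefficients is automatic for a semisimple induced representation, so it excludes nothing either.

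Your fallback route also fails as described, because Tadi{\'c}'s formula and Frobenius reciprocity cannot distinguish bad from good parity. Since $u\cong u^\vee$, the term $u\otimes\one$ occurs at least twice in the semisimplified Jacquet module of $u\rtimes\one$ along $\GL\times G_0$, once from the trivial and once from the longest Weyl element. So the best bound these tools can give is $\dim\mathrm{End}(u\rtimes\one)\le 2$, in either parity.

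The case $a=b=1$ shows this is not an artefact. There $[\mathrm{Jac}(\rho\rtimes\one)]=2\,\rho\otimes\one$ both when $\alpha_\rho=0$ (good parity, reducible) and when $\alpha_\rho=\tfrac12$ (bad parity, irreducible). The distinction comes from Silberger's reducibility theory, i.e.\ from the Plancherel measure, not from Jacquet modules.

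What you need is an input that decides whether the self-intertwining operator of $u\rtimes\one$ attached to $u\cong u^\vee$ is scalar. Possible sources are:
\begin{itemize}
\item analytic information such as the Plancherel measure;
\item Arthur's local intertwining relation together with a proof that this operator and the identity span the commuting algebra;
\item a quoted irreducibility theorem: M{\oe}glin's, or, if you accept them as black boxes, Theorem~\ref{thm:Atobe} at $s=0$ combined with Theorem~\ref{thm:BS} and Theorem~\ref{thm:Tadic}.
\end{itemize}
Your remark that $u\abs^t\rtimes\one$ ``never meets a reducibility point'' points at the right phenomenon. However, it gives no mechanism at the unitary point $t=0$, which is exactly where the question lies.
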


The irreducibility of inductions from essentially generalized Speh representations and a representation of Arthur type is controlled by the following result of Bo\v{s}njak--Stadler.

\begin{thm}[{\cite[Theorem~1.2]{BS25}}]\label{thm:BS}
Let $\tau_1,\dots,\tau_r$ be essentially Speh representations and let $\pi_A$ be a representation of $G$ of Arthur type. Then $\tau_1\times\cdots\times\tau_r\rtimes\pi_A$ is irreducible if and only if $\tau_i\times\tau_j$ and $\tau_i\times\tau_j^\vee$ are irreducible for all $i\neq j$ and $\tau_i\rtimes\pi_A$ is irreducible for all $i$.
\end{thm}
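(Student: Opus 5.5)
\emph{Only if.} Suppose first that $\tau_i\times\tau_j$ is reducible for some $i\neq j$. Since the $\tau_k$ with $k\neq i,j$ can be commuted past in the Grothendieck group, I would write $[\Pi]=[\tau_i\times\tau_j\times\Pi_0\rtimes\pi_A]$ and split $[\tau_i\times\tau_j]=[\sigma_1]+[\sigma_2]$ with $\sigma_1\neq\sigma_2$. Each $\sigma_k\times\Pi_0\rtimes\pi_A$ is nonzero, so $\Pi$ has at least two irreducible constituents, provided these two pieces cannot cancel into a single irreducible. To rule this out I would compare Jacquet modules along the maximal parabolic with Levi $\GL\times G$. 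The $\GL$-part of a suitable extremal term, detected by the first derivatives as in Lemma~\ref{lem:consecutive der Speh}, separates $\sigma_1$ from $\sigma_2$.

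For $\tau_i\times\tau_j^\vee$ I would use the Grothendieck-group identity $[\tau\rtimes\sigma]=[\tau^\vee\rtimes\sigma]$. This follows from Tadi\'c's formula, or from the MVW involution together with self-duality of $\pi_A$. It lets me replace $\tau_j$ by $\tau_j^\vee$ and reduce to the previous case. Reducibility of a single $\tau_i\rtimes\pi_A$ propagates in the same way, via exactness of induction and the same Jacquet-module separation.

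\emph{If.} This is the substantial direction. Set $\Pi=\tau_1\times\cdots\times\tau_r\rtimes\pi_A$. By Theorem~\ref{thm:Tadic}, the hypotheses make all $\tau_i,\tau_j^{\pm\vee}$ commute with each other. Irreducibility of $\tau_i\rtimes\pi_A$ gives $\tau_i\rtimes\pi_A\cong\tau_i^\vee\rtimes\pi_A$, since both are irreducible with the same semisimplification. Hence
\[
\Pi\cong\tau_1^{\vee}\times\cdots\times\tau_r^{\vee}\rtimes\pi_A .
\]
By the MVW involution and $\pi_A^\vee\cong\pi_A$ up to MVW twist, the right-hand side is (up to that twist) the contragredient of $\Pi$. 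Consequently $\soc(\Pi)$ and $\mathrm{cosoc}(\Pi)$ are dual to each other.

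It then suffices to show that $\soc(\Pi)$ is irreducible and occurs in $[\Pi]$ with multiplicity one. In that case the cosocle, being dual to it, is the same constituent, which forces $\Pi$ to be irreducible. I would prove both socle statements by induction on $r$ and on the size of the $\tau_i$. I would peel off extremal cuspidal exponents with highest derivatives $D^{(k)}_{\rho\abs^z}$, choosing $z$ to be the maximal exponent among the highest segments so that $z\neq0$. The Leibniz rule \eqref{eq:Leibniz} and Lemma~\ref{lem:consecutive der Speh} then compute $D^{(k)}_{\rho\abs^z}(\Pi)$ as a smaller induction of the same type, which is irreducible by induction and occurs with multiplicity one. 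Frobenius reciprocity transfers this back to $\Pi$.

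The main obstacle is the base case where no such $z$ exists: the $\tau_i$ are unitary (centered at $0$) and interact with $\pi_A$ itself. Here I would instead invoke M{\oe}glin's explicit description of $A$-packets, in the form of Atobe's derivative algorithm, to control $\tau\rtimes\pi_A$. Making the derivative bookkeeping uniform across the cases where $\tau_i$ is linked to parts of $\psi_{\pi_A}$ is where I expect most of the work to lie.
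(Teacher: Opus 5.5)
The paper gives no proof of this statement. It is imported from Bo\v{s}njak--Stadler \cite{BS25} and used as a black box, so your proposal has to stand on its own. Your ``only if'' direction is fine, and simpler than you make it. Classes of finite-length representations in the Grothendieck group are effective, so if $[\tau_i\times\tau_j]=[\sigma_1]+[\sigma_2]+\cdots$ with $\sigma_1,\sigma_2$ irreducible, then
\[
[\Pi]=[\sigma_1\times\Pi_0\rtimes\pi_A]+[\sigma_2\times\Pi_0\rtimes\pi_A]+\cdots
\]
already has length at least $2$. Nothing can ``cancel'', and no Jacquet-module separation is needed. The same remark handles a reducible $\tau_i\rtimes\pi_A$, and $[\tau\rtimes\sigma]=[\tau^\vee\rtimes\sigma]$ handles $\tau_i\times\tau_j^\vee$. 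Your preliminary reduction in the ``if'' direction is also correct and standard. The hypotheses give $\Pi\cong\tau_1^\vee\times\cdots\times\tau_r^\vee\rtimes\pi_A$, the MVW involution then gives $\soc(\Pi)\cong\mathrm{cosoc}(\Pi)$, and an irreducible socle of multiplicity one in $[\Pi]$ forces $\Pi$ to be irreducible.

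The gap is in proving that the socle is irreducible of multiplicity one, which is the entire content of the theorem.

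\emph{The induction leaves its class.} By Lemma~\ref{lem:consecutive der Speh}, a nonzero $L_{\rho\abs^z}$ (resp.\ $R_{\rho\abs^{-z}}$) of an essentially Speh representation only shortens its lowest (resp.\ highest) segment. So for $a,b\geq2$ the factor $L^{\max}\circ R^{\max}(\tau_i)$ in the highest Leibniz term is a ladder that is no longer essentially Speh. Likewise $D^{\max}_{\rho\abs^z}(\pi_A)$ is irreducible but in general not of Arthur type. Already for $\pi_A=u_\rho(a,b)\rtimes\one$ of purely bad parity with $a\neq b$ and $a,b\geq2$, the highest derivative at $\rho\abs^{(a-b)/2}$ violates condition (b) of Lemma~\ref{lem:pA-type}. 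Hence $D^{(k)}_{\rho\abs^z}(\Pi)$ is not ``a smaller induction of the same type'', and the induction hypothesis cannot be applied to it. You also never check that the irreducibility hypotheses pass to the derived data.

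\emph{The transfer back to $\Pi$ is not automatic.} You need every irreducible subrepresentation of $\Pi$ to have a nonzero derivative of the full order $k$, i.e.\ an embedding $\Pi\hookrightarrow(\rho\abs^z)^k\rtimes\Pi'$. This requires moving to the front the copies of $\rho\abs^{-z}$ produced by right derivatives and the copies of $\rho\abs^{z}$ coming from $\pi_A$. That is exactly where reducibility shows up: compare \S\ref{sec:main}, where reducibility is detected by a subrepresentation and a subquotient having different derivative orders.

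\emph{The base case is the whole problem.} When all $\tau_i$ are unitary, \ref{U1} makes $\Pi$ semisimple, so the socle--cosocle argument gives nothing. Since $r=1$ is precisely the hypothesis, what remains is the passage from single-factor irreducibility to the product. ``Invoke M{\oe}glin's description via Atobe's algorithm'' names tools but supplies no argument for that passage. As written, your ``if'' direction is an outline with its essential step missing.
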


We also use the following special case of the irreducibility result of Atobe.

\begin{thm}[{\cite[Theorem~5.3]{Ato22b}}]\label{thm:Atobe}
Let $\rho$ be self-dual, $a,b\geq1$ and $s\in\R$. Then $u_\rho(a,b)\abs^s\rtimes\one$ is irreducible if $ \half{a+b} +s $ is not in $\alpha_{\rho} +\Z$.
\end{thm}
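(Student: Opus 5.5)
This statement is quoted from \cite[Theorem~5.3]{Ato22b}, and the paper uses it as a black box. If I had to prove it directly, I would use the standard ``socle and cosocle share a multiplicity-one Jacquet constituent'' argument.

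Set $\Pi:=u_\rho(a,b)\abs^s\rtimes\one$. Every supercuspidal $\rho\abs^{x}$ in the support of $u_\rho(a,b)\abs^s$ has $x\in s+\frac{a-b}{2}+\Z=s+\frac{a+b}{2}+\Z$. Since $\alpha_\rho\in\{0,\frac12,1\}$, we have $-\alpha_\rho+\Z=\alpha_\rho+\Z$. So the hypothesis says that no exponent $\pm x$ in the support meets the reducibility coset $\alpha_\rho+\Z$. In particular, every $\rho\abs^{x}\rtimes\one$ and $\rho\abs^{-x}\rtimes\one$ with $x$ in the support is irreducible, by \cite{Sil80}.

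First, I would write down the semisimplified Jacquet module of $\Pi$ with respect to the minimal parabolic, using Tadi\'c's formula (equivalently, the Leibniz rule \eqref{eq:Leibniz} iterated). The Jacquet module of $u_\rho(a,b)\abs^s$ is explicit, since it is a ladder representation \cite{KL12}. Each term is obtained by splitting the ladder, keeping a left part, and replacing a right part $R$ by $R^\vee$. Next I would choose a distinguished exponent sequence $\omega\otimes\one$: take the one read off from $u_\rho(a,b)\abs^s$ itself in the order given by Lemma~\ref{lem:consecutive der Speh}, i.e.\ all segments of the ladder traversed from the top. I would then show that $\omega$ occurs in $[\mathrm{Jac}(\Pi)]$ with multiplicity exactly one.

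The key point in that count is that a term with nontrivial contragredient part contains some exponent $-x$. That would have to coincide with an exponent $x'$ of $u\abs^s$ placed in the same position. Because of Lemma~\ref{lem:consecutive der Speh} and the ordering, this forces a coincidence $x'=-x$ together with the ladder rigidity. I expect the coset hypothesis to exclude exactly the problematic ``turnaround'' at $0$ or $\pm\frac12$ that arises when $\alpha_\rho+\Z$ contains the support. This multiplicity count is the step I expect to be the main obstacle. I would first try to verify it by induction on $b$, using the embedding $u\hookrightarrow\Delta_u\times u^{\flat}$ from Lemma~\ref{lem:Delta uj+}. That embedding reduces the count to a single segment $\Delta\rtimes\one$ with the same coset condition. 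For a single segment, the irreducibility of $\Delta\rtimes\one$ when $e(\Delta)\notin\alpha_\rho+\Z$ is classical (Tadi\'c).

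Finally, I would conclude as follows. By Frobenius reciprocity (property (i) of derivatives), every irreducible subrepresentation of $\Pi$ has $\omega\otimes\one$ in its Jacquet module. Now apply the MVW involution and contragredient. The representation $\Pi$ has the same semisimplification as $(u_\rho(a,b)\abs^{s})^\vee\rtimes\one$, and the dual of the latter is again $u_\rho(a,b)\abs^{s}\rtimes\one$ up to this involution. Hence every irreducible quotient of $\Pi$ also contains $\omega\otimes\one$. Multiplicity one then forces the socle to be irreducible and to coincide with the cosocle, and to appear only once in $\Pi$. Therefore $\Pi$ is irreducible.
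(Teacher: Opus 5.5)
You have the setup right: the support lies in $s+\half{a+b}+\Z$, and $-\alpha_\rho+\Z=\alpha_\rho+\Z$. But the step you flag as the main obstacle is not merely hard, it is false, so the argument cannot be completed as planned. The hypothesis keeps the support away from the reducibility points $\pm\alpha_\rho$, not away from the turnaround.

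When $s+\half{a+b}\in\half{1}\Z$, the support lies in the bad-parity coset $\alpha_\rho+\half{1}+\Z$. This coset is self-dual and contains $0$ or $\pm\half{1}$. It is exactly the case this paper needs, and there the contragredient terms of Tadi\'c's formula do reproduce $\omega$.

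The cleanest instance is $s=0$. Since $\rho$ is self-dual, $u_\rho(a,b)^\vee\cong u_\rho(a,b)$. Hence the Jacquet module of $\Pi$ along the parabolic with Levi $\GL\times G_0$ contains $u\otimes\one+u^\vee\otimes\one=2\,u\otimes\one$, and every exponent sequence of $u$, in particular your $\omega$, occurs at least twice.

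Concretely, take $\Sp_4(F)$ and $\St(\one,2)\rtimes\one$. Here $\alpha_{\one}=1$ and $\frac{3}{2}\notin\Z$, so the statement applies and the representation is irreducible. Its minimal Jacquet module is $2\,\abs^{1/2}\otimes\abs^{-1/2}+2\,\abs^{1/2}\otimes\abs^{1/2}$, so no exponent sequence has multiplicity one.

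A non-unitary example: for $\SO_5(F)$ one has $\alpha_{\one}=\half{1}$. The representation $\seg{\one}{1}{0}\rtimes\one=u_{\one}(2,1)\abs^{1/2}\rtimes\one$ is irreducible; it equals $L(\seg{\one}{0}{-1};\one)$. Yet its minimal Jacquet module is $2\,\abs^{1}\otimes\abs^{0}+\abs^{0}\otimes\abs^{1}+\abs^{0}\otimes\abs^{-1}$, so your $\omega=\abs^1\otimes\abs^0$ has multiplicity $2$. The proposed induction on $b$ via $u\hookrightarrow\Delta_u\times u^{\flat}$ cannot repair this. The single-segment base case already fails, and irreducibility of $\Delta\rtimes\one$ says nothing about Jacquet multiplicities.

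The cosocle step has a separate gap. Contragredient plus MVW shows that every irreducible quotient of $\Pi$ embeds into $(u_\rho(a,b)\abs^s)^\vee\rtimes\one$. So it contains the exponent sequence $\omega^\vee$ of $(u_\rho(a,b)\abs^s)^\vee$ (negated exponents in reversed order), not $\omega$. The remark that the dual of $\Pi^\vee$ is again $\Pi$ is circular.

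A correct socle/cosocle argument must show that the constituent singled out by one of $\omega,\omega^\vee$ carries all copies of the other. In the $\SO_5$ example, you would have to show that $L(\seg{\one}{0}{-1};\one)$, singled out by $\omega^\vee=\abs^0\otimes\abs^{-1}$, contains both copies of $\abs^1\otimes\abs^0$. This needs information about the possible constituents that a count on $\Pi$ alone does not supply.

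Your no-turnaround heuristic is valid only when $s+\half{a+b}\notin\half{1}\Z$. The self-dual (bad-parity) case is the real content of the statement, and the paper does not prove it; it imports it from \cite{Ato22b}.
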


We end this section by showing the easier direction of Theorem \ref{thm:main}.

\begin{prop}\label{prop:A prime unitary}
    Suppose that $\pi$ is a representation of purely bad parity satisfying condition $(\mathrm{A}')$. Then $\pi$ is unitary. Moreover, in the second case of Definition \ref{def A prime}, the induction $\abs^{1/2}\rtimes\pi'$ is irreducible.
\end{prop}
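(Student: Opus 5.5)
If $\pi$ is of Arthur type, unitarity is the last assertion of Theorem~\ref{thm:Moeglin}, so only the second case of Definition~\ref{def A prime} needs work. There $G=\Sp_{2n}(F)$ and $\pi\cong\abs^{1/2}\rtimes\pi'$ with $\pi'$ of Arthur type and purely bad parity. By Theorem~\ref{thm:Moeglin} we write
\[
\pi'=\bigtimes_{i=1}^r u_{\rho_i}(a_i,b_i)\rtimes\one, \qquad \alpha_{\rho_i}+\tfrac{a_i+b_i}{2}\notin\Z .
\]
For the trivial character $\rho=\one_{\GL_1}$ and $G=\Sp$, we have $\alpha_\rho=1$ (since $\abs^{1}\rtimes\one_{\Sp_0}$ reduces). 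So $\abs^{1/2}=u_{\one}(1,1)\abs^{1/2}$ fits the framework. I will realize $\abs^{1/2}\rtimes\pi'$ as the endpoint of a complementary series: consider the family $\Pi_s:=\abs^{s}\rtimes\pi'$ for $s\in[0,\tfrac12]$ and apply \ref{U2}.

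First I establish irreducibility of $\Pi_s$ for all $s\in[0,\tfrac12]$ using Theorem~\ref{thm:BS} with $\tau=\abs^s$ and $\pi_A=\pi'$. Since there is a single $\tau$, no pairwise conditions $\tau_i\times\tau_j$ arise. The only condition is that $\abs^s\rtimes\pi'$ be irreducible, which is exactly what I am trying to prove, so Theorem~\ref{thm:BS} alone is circular here.

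Instead I will take all the Speh factors together. Set $\tau_0=\abs^s$ and $\tau_i=u_{\rho_i}(a_i,b_i)$ for $i\ge1$, and apply Theorem~\ref{thm:BS} with $\pi_A=\one$, which is of Arthur type. This requires checking three things.

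(1) The factor $\abs^s\rtimes\one$ is irreducible by Theorem~\ref{thm:Atobe}, since $\tfrac{1+1}{2}+s=1+s\notin 1+\Z$ for $s\in(0,\tfrac12]$. At $s=0$ it is a unitary induction of $\Sp_2$ from the trivial character, which is irreducible.

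(2) Each $u_{\rho_i}(a_i,b_i)\rtimes\one$ is irreducible by Theorem~\ref{thm:Moeglin}, or by Theorem~\ref{thm:Atobe} with $s=0$.

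(3) The products $\tau_0\times\tau_i$, $\tau_0\times\tau_i^\vee=\abs^{s}\times u_{\rho_i}(a_i,b_i)$ and the mutual products of the $\tau_i$ must be irreducible. The mutual products are handled by Lemma~\ref{lem:Speh-irr}(a) with $t=t'=0$. For $\abs^s\times u_{\rho_i}(a_i,b_i)$ with $\rho_i=\one$, Theorem~\ref{thm:Tadic} requires the exponents to lie in the same coset mod $\Z$. The exponents of $u_{\one}(a_i,b_i)$ lie in $\tfrac{a_i+b_i}{2}+\Z$, which equals $\tfrac12+\Z$ because $1+\tfrac{a_i+b_i}{2}\notin\Z$. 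So the check reduces to $s=\tfrac12$. At $s=\tfrac12$, Lemma~\ref{lem:Speh-irr}(b), with $u_\rho(a,b)\abs^{-1/2}$ replaced by $u_\one(1,1)$ shifted appropriately, or a direct quadruple comparison, should give irreducibility: $q(\abs^{1/2})=(\tfrac12,\tfrac12,\tfrac12,\tfrac12)$ is strongly comparable to $q(u_{\one}(a,b))=(-A,B,-B,A)$ only if all four differences have the same sign. That would need $-A>\tfrac12$ and $A>\tfrac12$ simultaneously, or the reverse, which is impossible. So the products are irreducible.

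With these checks, Theorem~\ref{thm:BS} gives that $\Pi_s=\abs^s\times\bigtimes u_i\rtimes\one$ is irreducible for all $s\in[0,\tfrac12]$. It is Hermitian because the induced data are self-dual up to $s\mapsto -s$. $\Pi_0$ is a unitary induction, hence unitary. By \ref{U2}, $\Pi_s$ is unitary on $[0,\tfrac12)$, and every irreducible subquotient of $\Pi_{1/2}$ is unitary. Since $\Pi_{1/2}\cong\abs^{1/2}\rtimes\pi'$ is irreducible, it equals $\pi$, which is therefore unitary. This also proves the irreducibility claim.

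The main obstacle is verifying the hypotheses of Theorem~\ref{thm:BS} at the endpoint $s=\tfrac12$. In particular I must ensure that $\abs^{1/2}\rtimes\one_{\Sp_0}$ counts as irreducible in Atobe's sense: it is just the character of $\Sp_2\supset$ torus induced, with $\alpha=1$, so no reduction occurs at $\tfrac12$. I must also make sure the quadruple comparison with the $u_{\one}(a_i,b_i)$ is handled uniformly across the coset conditions.
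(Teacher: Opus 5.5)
Your proof is correct and is essentially the paper's argument. You use the same family $\Pi_s=\abs^{s}\times\bigtimes_i u_{\rho_i}(a_i,b_i)\rtimes\one$, prove irreducibility for $s\in[0,\tfrac12]$ via Theorem~\ref{thm:BS} with $\pi_A=\one$, Theorem~\ref{thm:Atobe} and Tadi\'c's criterion, and then conclude with \ref{U2}.

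One small slip is in your endpoint quadruple comparison. The case $q(u_{\one}(a,b))<_{\mathrm{strong}}q(\abs^{1/2})$ is not impossible on its face, since it holds for $a=b=1$. It is excluded only because bad parity forces $A\in\tfrac12+\Z_{\geq0}$. More simply, Lemma~\ref{lem:Speh-irr}(a) with $|s-0|\leq\tfrac12<1$ gives irreducibility of $\abs^{s}\times u_{\rho_i}(a_i,b_i)$ for all $s$ at once, which is what the paper uses.
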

\begin{proof}
    By Theorem \ref{thm:Moeglin}, we only need to consider the case that $G= \Sp_{2n}(F)$ and
    \[ \pi= \abs^{1/2} \times \bigtimes_{i \in I} u_{\rho_i}(a_i, b_i) \rtimes \pmb{1}_{\Sp_{0}}.\]
    Consider a family
    \[ \Pi_s:=\abs^{s} \times \bigtimes_{i \in I} u_{\rho_i}(a_i, b_i) \rtimes \pmb{1}_{\Sp_{0}} \]
    for $0 \leq s \leq \half{1}$. By Theorems \ref{thm:BS}, \ref{thm:Atobe} and Lemma \ref{lem:Speh-irr}, $\Pi_s$ is irreducible for all $ 0 < s \leq \half{1}$.  Moreover, $\Pi_{0}$ is also irreducible, since the good parity induction $\abs^0 \rtimes \pmb{1}_{\Sp_{0}}$ is irreducible. Since $\Pi_0$ is unitarily induced from a unitary representation, it is unitary. Hence, by \ref{U2}, $\{\Pi_{s}\}_{0 \leq s \leq \half{1}}$ is a continuous family of irreducible unitary representations. In particular, $\pi= \Pi_{\half{1}}$ is irreducible and unitary. This completes the proof of the proposition. 
\end{proof}

\section{Reduction to minimal counter examples}\label{sec:reduction}
We give a precise definition of \emph{minimal counter examples}, which were mentioned in the outline of the proof.

\begin{defn}\label{def:reduced}
 We say that a representation $\pi$ of purely bad parity is a \emph{minimal counter example} if $\pi$ is the Langlands subquotient of a parabolic induction of the form
\[
\St(\rho,a)\abs^{s}\times\St(\rho,a)\abs^{s}\times \bigtimes_{i\in I}u_{\rho_i}(a_i,b_i)\rtimes\pi_0,
\]
where
\begin{itemize}
\item $s<-\half{1}$ (and $s \in \half{1}\Z$ since $\pi$ is of purely bad parity);
\item $(\rho_i,a_i,b_i)\neq(\rho,a,-2s-1)$ for all $i\in I$;
\item if $G=\SO_{2n+1}$, then $\pi_0$ is the trivial representation of $\SO_1(F)$. If $G=\Sp_{2n}$, then $\pi_0$ is either the trivial representation of $\Sp_0(F)$ or the irreducible induced representation $\Ind_{\GL_1}^{\Sp_2}\abs^{1/2}=\abs^{1/2}\rtimes\one_{\Sp_0}$.
\end{itemize}
\end{defn}

In Notation \ref{not:UDelta}, we have $\widetilde{u}_\Delta=u_\rho(a,-2s-1)$, so the second condition says that $u_{\rho_i}(a_i,b_i)\not\cong \widetilde{u}_\Delta$ for all $i\in I$. We will see in Remark~\ref{rem:reduced-not-pA} that a minimal counter example does not satisfy condition $(\mathrm{A}')$. The goal of this section is to prove the following reduction.

\begin{prop}\label{prop:reduction}
Suppose that there exists a unitary representation of purely bad parity that does not satisfy condition $(\mathrm{A}')$. Then there exists a unitary representation of purely bad parity that is a minimal counter example.
\end{prop}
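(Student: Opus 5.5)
Starting from a unitary purely bad parity $\pi$ that fails condition $(\mathrm{A}')$, I would build a finite chain of unitary representations by repeated Arthurization, and take the last member before $(\mathrm{A}')$ holds as the minimal counter example.

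\textbf{Setup.} First I invoke the parity constraint, Proposition~\ref{prop:parity} (stated in this section, in the form described in the introduction). For any unitary $\pi$ of purely bad parity it gives:
\begin{itemize}
\item every multiplicity $m_{\rho\abs^{x}\otimes S_a}(\pi)$ is even;
\item the only exceptions are the trivial summand when $G=\Sp_{2n}(F)$, and $m_{\abs^{1/2}\otimes S_1}(\pi)$, which may be odd.
\end{itemize}
In the $\Sp_{2n}$ case with odd $m_{\abs^{1/2}\otimes S_1}$, I split off one copy of $\abs^{1/2}$ and write $\pi_0=\abs^{1/2}\rtimes\one$. Otherwise $\pi_0=\one$. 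After this, all remaining multiplicities are even, and condition (a) holds modulo $\pi_0$.

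\textbf{Multiplicity characterization of $(\mathrm{A}')$.} Next I use the characterization of condition $(\mathrm{A}')$ in terms of multiplicities (the analogue of (a)+(b)). Then $\pi$ fails $(\mathrm{A}')$ exactly when there is some $\rho\abs^{x}\otimes S_a$ with $x<0$ and
\[
m_{\rho\abs^{x}\otimes S_a}(\pi)>m_{\rho\abs^{x+1}\otimes S_a}(\pi).
\]
Write $x=\half{1-b}$. By the parity result the difference is $2r$ with $r\geq 1$.

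\textbf{Arthurization step.} For such a failure, set $\Delta=\St(\rho,a)\abs^{x}$. Let $\pi'$ be the Langlands subquotient of $\widetilde u_\Delta\rtimes\pi$. By \ref{U1}, $\pi'$ is unitary, and it is again of purely bad parity. Its parameter is $\phi_\pi$ plus two copies of the string $\rho\abs^{x+1}\otimes S_a,\dots,\rho\abs^{-x-1}\otimes S_a$. This step:
\begin{itemize}
\item lowers the defect $m_{x}-m_{x+1}$ by $2$;
\item keeps every multiplicity even, apart from the $\pi_0$ part;
\item may raise defects at other exponents: at $-x-1$ versus $-x$ (only if $-x-1<0$, which is impossible since $x<-\half{1}$ or $x=-\half{1}$); otherwise only at $x+1$ relative to $x+2$, and only when $x+1<0$.
\end{itemize}

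\textbf{Choice of order and termination.} The main obstacle is choosing the order of these steps so that the process terminates and its penultimate stage has exactly one defect, equal to $2$, at some $x_0<-\half{1}$. I would always process the failure with the largest $x$, that is, the one closest to $0$. Adding the string only increases multiplicities at exponents $\geq x+1$, and it does so uniformly along the interior of the string. So for exponents $x'$ with $x<x'<-x-1$ the differences $m_{x'}-m_{x'+1}$ are unchanged, except at the string's endpoints:
\begin{itemize}
\item at $x+1$ the new $m$ grows, which only helps the inequality between $x$ and $x+1$;
\item the right endpoint lies at positive exponent, so it does not matter.
\end{itemize}
Hence each step strictly decreases the total defect $\sum_{x<0}\max(0,m_x-m_{x+1})$ (summed over all $(\rho,a)$) by $2$. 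The process therefore terminates at a representation satisfying $(\mathrm{A}')$.

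\textbf{Extracting the minimal counter example.} Let $\pi''$ be the stage just before termination. It is unitary and has total defect exactly $2$, concentrated at a single $\rho_0\abs^{x_0}\otimes S_{a_0}$. The case $x_0=-\half{1}$ needs care. There a defect means more $\rho\abs^{-1/2}\otimes S_a$ than $\rho\abs^{1/2}\otimes S_a$, which is impossible for a parameter containing both $\phi_\Delta$ and $\phi_\Delta^\vee$, apart from the $\pi_0$ exception, which I handle separately. So $x_0<-\half{1}$.

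Now I read off the $L$-parameter of $\pi''$. Greedily peel generalized Speh strings of the form $\rho\abs^{y}\otimes S_a,\dots,\rho\abs^{-y}\otimes S_a$, taken with multiplicity two. Because every other inequality holds, the parameter is that of
\[
\Delta_0\times\Delta_0\times\bigtimes_{i\in I}u_{\rho_i}(a_i,b_i)\rtimes\pi_0, \qquad \Delta_0=\St(\rho_0,a_0)\abs^{x_0}.
\]
By the singleton $L$-packet property, $\pi''$ is its Langlands subquotient.

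\textbf{Normalization.} Finally, if some $u_{\rho_i}(a_i,b_i)$ equals $\widetilde u_{\Delta_0}$, I absorb it: $\Delta_0$, $\widetilde u_{\Delta_0}$ and $\Delta_0^\vee$ together make up $u_{\Delta_0}$, which would leave no defect. This contradicts the defect being $2$, so it cannot occur. Hence the second condition of Definition~\ref{def:reduced} holds, and $\pi''$ is the desired unitary minimal counter example.
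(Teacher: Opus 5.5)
Your proposal is correct and is essentially the paper's argument: it is the iterated Arthurization sketched in the introduction, while the formal proof performs all the steps in one induction (adding $m_\Delta(\pi)/2$ copies of $\widetilde u_\Delta$ for $\Delta\in\Deven\setminus\{\Delta_0\}$ and $k_0$ copies of $\widetilde u_{\Delta_0}$), which amounts to the same thing since the relevant $L$-packets are singletons. Note only that a step at $x$ changes no difference $m_{x'}-m_{x'+1}$ with $x'<0$ other than the one at $x'=x$, so your ordering by largest $x$ and your worry about new defects at $x+1$ are unnecessary, and there is no $\pi_0$ exception at $x_0=-\half{1}$ because all the parameters involved (including $\phi_{\pi_0}$) are self-dual.
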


\subsection{A characterization of condition \texorpdfstring{$(\mathrm{A}')$}{(A')}}
In this subsection, we characterize condition $(\mathrm{A}')$ in terms of the multiplicities of segments in the $L$-data of $\pi$. 

\begin{defn}\label{def:mult}
Let $\pi=L(\Delta_1,\dots,\Delta_f;\ptemp)$ with $\ptemp=\Delta^0_1\times\cdots\times\Delta^0_k\rtimes\one$, where $e(\Delta^0_j)=0$. For a segment $\Delta$, define
\[
m_{\Delta}(\pi):=\begin{cases}
\#\{1\leq i\leq f\mid \Delta_i\cong\Delta\} & \text{if } e(\Delta)<0,\\
m_{\Delta^\vee}(\pi) & \text{if } e(\Delta)>0,\\
2\cdot\#\{1\leq j\leq k\mid \Delta^0_j\cong\Delta\} & \text{if } e(\Delta)=0.
\end{cases}
\]
\end{defn}

\begin{lemma}\label{lem:pA-type}
Let $\pi$ be a representation of $G$ of purely bad parity. Then $\pi$ satisfies condition $(\mathrm{A}')$ if and only if the following hold.
\begin{enumerate}[label=(\alph*)]
\item $m_{\Delta}(\pi)$ is even for all segments $\Delta$, except that $m_{\abs^{\pm 1/2}}(\pi)$ may be odd if $G=\Sp_{2n}(F)$.
\item $m_{\Delta}(\pi)\leq m_{\Delta\abs^1}(\pi)$ for all $\Delta$ with $e(\Delta)<0$.
\end{enumerate}
\end{lemma}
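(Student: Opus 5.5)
Both directions go through Mœglin's description (Theorem~\ref{thm:Moeglin}) of Arthur type representations of purely bad parity, translated into multiplicities.

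\emph{Multiplicities of a Speh representation.} For $\pi_A=\bigtimes_{j\in J}u_{\rho_j}(a_j,b_j)\rtimes\one$, I will first compute the $L$-data.
\begin{itemize}
\item The $L$-parameter is $\phi_{\pi_A}=\sum_j(\phi_{u_j}+\phi_{u_j}^\vee)+\phi_{\one}$.
\item Each $u_\rho(a,b)$ has segments $\St(\rho,a)\abs^{\frac{1-b}{2}+k}$ for $0\le k\le b-1$, and these are self-dual as a set.
\item So each segment $\Delta$ with $e(\Delta)<0$ coming from $u_j$ occurs twice among the negative-exponent segments, once from $u_j$ and once from $u_j^\vee\cong u_j$.
\item A segment of exponent $0$ (present when $b_j$ is odd) contributes one copy of $\Delta$ to $\ptemp$, hence $2$ to $m_\Delta$.
\end{itemize}
Hence, for $\Delta=\St(\rho,a)\abs^{x}$ with $x<0$,
\[
m_\Delta(\pi_A)=2\#\{j:(\rho_j,a_j)=(\rho,a),\ \tfrac{1-b_j}{2}\le x\}.
\]
This is even and non-decreasing in $x$ up to $x=0$. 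The monotonicity step holds because any $j$ counted for $\Delta$ is also counted for $\Delta\abs^1$, and at $x+1=0$ we use the definition $m=2\#$ for tempered pieces. This shows that Arthur type implies (a) and (b).

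\emph{The extra family.} In the second case of $(\mathrm{A}')$, $\pi=\abs^{1/2}\rtimes\pi'$ is irreducible (Proposition~\ref{prop:A prime unitary}). Its $L$-data is that of $\pi'$ with one extra segment $\abs^{-1/2}$. Thus $m_{\abs^{-1/2}}$ increases by one, and this is exactly the permitted odd exception in (a). Inequality (b) at $\Delta=\abs^{-1/2}$ compares with $\abs^{1/2}$, which is defined via the dual, giving $m_{\abs^{1/2}}=m_{\abs^{-1/2}}$; so (b) still holds. Inequality (b) at $\Delta=\abs^{-3/2}$ only improves. So $(\mathrm{A}')$ implies (a) and (b).

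\emph{Converse.} Given (a) and (b), I construct the Speh data directly, fixing $(\rho,a)$ and working along the chain of exponents $x\in-\frac12\Z$.
\begin{itemize}
\item If $m_{\abs^{-1/2}}$ is odd, first remove one $\abs^{-1/2}$; the result still satisfies (a) and (b) with all multiplicities even.
\item For each $(\rho,a)$, define the number of $j$ with lowest exponent $\frac{1-b}{2}=x$ to be $\frac12\bigl(m_{\St(\rho,a)\abs^{x}}-m_{\St(\rho,a)\abs^{x-1}}\bigr)\ge0$, using (b). This is a telescoping inversion of the formula above.
\end{itemize}
Two points need care here.
\begin{itemize}
\item The exponent-zero level: $m$ there is twice the tempered multiplicity, which matches the odd-$b$ Speh contributions.
\item The parity condition $\alpha_\rho+\frac{a+b}{2}\notin\Z$ must hold. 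It follows from the purely bad parity hypothesis: $x\in\alpha_\rho+\frac12+\Z$ for the supercuspidal exponents of the segments $\St(\rho,a)\abs^x$.
\end{itemize}
The $L$-data of the resulting $\pi_A$ (times $\abs^{1/2}$ if one was removed) then matches that of $\pi$. Since purely bad parity $L$-packets are singletons, $\pi$ equals this representation, so $\pi$ satisfies $(\mathrm{A}')$.

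\emph{Main obstacle.} I expect the hardest part to be the bookkeeping at exponents $0$ and $\pm\frac12$. This means checking that the conventions in Definition~\ref{def:mult} make the monotonicity formula exact across $x=0$. It also means checking that the exception for $\abs^{\pm1/2}$ in (a) corresponds precisely to a single factor $\abs^{1/2}$ and not to some other non-Arthur configuration. For the latter, I will rely on the fact that $\phi_{\one}$ for $\Sp_0$ is the trivial character, which is what allows a lone $\abs^{\pm1/2}$ pair in the $\Sp_{2n}$ case.
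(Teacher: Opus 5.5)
Your proposal is correct and follows the paper's route. The paper's proof just invokes M{\oe}glin's description (Theorem~\ref{thm:Moeglin}), the count of two copies of each segment per factor $u_{\rho_j}(a_j,b_j)$, and the extra $\abs^{-1/2}$ contributed by $\pi_0=\abs^{1/2}\rtimes\one_{\Sp_0}$, and it omits the bookkeeping you supply. Three minor points: (b) at $\abs^{-3/2}$ survives removing one $\abs^{-1/2}$ by parity ($m_{\abs^{-3/2}}$ is even and $m_{\abs^{-1/2}}$ is odd); your removal step already handles the exception, so the appeal to $\phi_{\one}$ is unnecessary; and for fixed $(\rho,a)$ the exponents run over the single coset $\alpha_\rho+\frac a2+\Z$, not all of $-\frac12\Z$.
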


\begin{proof}
This follows from the definition and Theorem \ref{thm:Moeglin}. See the explanation for the characterization of Arthur type in the introduction. Note that if $\pi_0$ is non-trivial, i.e., $\pi_0= \abs^{1/2} \rtimes \pmb{1}_{\Sp_0}$, then $m_{\abs^{-1/2}}(\pi_0)=1$. We omit the rest of the details.
\end{proof}

\begin{remark}\label{rem:reduced-not-pA}
If $\pi$ is a minimal counter example, with $\Delta=\St(\rho,a)\abs^s$ as in Definition~\ref{def:reduced}, it is not hard to see that $m_{\Delta}(\pi)-m_{\Delta\abs^1}(\pi)\in\{1,2\}$. Moreover, it is $1$ if and only if $\Delta=\abs^{-3/2}$ and $\pi_0$ is non-trivial. In particular, minimal counter examples do not satisfy condition $(\mathrm{A}')$.
\end{remark}

\subsection{Parity constraint}
In this subsection, we prove the following proposition.

\begin{prop}\label{prop:parity}
Suppose that $\pi$ is a representation of $G$ of purely bad parity that is unitary. Then for every segment $\Delta$, $m_{\Delta}(\pi)$ is even, unless $G=\Sp_{2n}(F)$ and $\Delta\in\{\abs^{1/2}, \abs^{-1/2}\}$.
\end{prop}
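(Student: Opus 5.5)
Suppose $\pi$ is unitary of purely bad parity and that some $m_\Delta(\pi)$ is odd, with $\Delta\notin\{\abs^{\pm1/2}\}$ if $G=\Sp_{2n}(F)$. Since $m_\Delta=m_{\Delta^\vee}$ and tempered segments always have even multiplicity by Definition~\ref{def:mult}, we may take $e(\Delta)<0$. The plan is to Arthurize every segment \emph{except} for one copy of each odd-multiplicity segment. This will produce a unitary representation that is the irreducible value at $s=0$ of the family $\Pi_s$ described in the introduction. We then play the complementary series against the classification of unitary complementary Arthur representations.

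\textbf{Step 1: Arthurization with unitary inductions.} We process the segments $\Delta'$ with $e(\Delta')<0$ in order of increasing $e(\Delta')$. Whenever the pair consisting of $m_{\Delta'}(\pi)$ and $m_{\Delta'\abs^1}(\pi)$ violates both evenness and condition (b), we induce $\pi$ by unitary generalized Speh representations $\widetilde u_{\Delta'}$, or by $u_{\Delta'\abs^{1/2}}$-type factors, which are unitary by \ref{U3}. We then pass to the Langlands subquotient, which is unitary by \ref{U1}. The goal is a unitary purely bad parity representation $\pi''$ whose $L$-parameter equals that of
\[
\Pi_0=\bigtimes_{i\in I_{odd}}u_{\rho_i}(a_i,b_i)\abs^{-1/2}\times\bigtimes_{i\in I_{even}}u_{\rho_i}(a_i,b_i)\rtimes\one,
\]
with $I_{odd}\neq\emptyset$. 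Concretely, each odd-multiplicity chain $\Delta,\Delta\abs^1,\dots$ is completed, after adding a Speh factor, into the parameter of $u_\rho(a,b)\abs^{-1/2}$, so that only a single copy survives. The bookkeeping here uses the multiplicity characterization in Lemma~\ref{lem:pA-type}.

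\textbf{Step 2: irreducibility of $\Pi_s$.} For $s\in[0,\tfrac12)$ the factors $u_{\rho_i}(a_i,b_i)\abs^{-1/2+s}$ have $\tfrac{a_i+b_i}{2}-\tfrac12+s\notin\alpha_{\rho_i}+\Z$ when $s\neq0$. They therefore satisfy Theorem~\ref{thm:Atobe}, while the case $s=0$ is of good parity and will be handled separately. Pairwise irreducibility follows from Lemma~\ref{lem:Speh-irr}, and Theorem~\ref{thm:BS} then gives that $\Pi_s$ is irreducible on $[0,\tfrac12)$. Consequently $\pi''=\Pi_0$, since the $L$-packet is a singleton.

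\textbf{Step 3: contradiction.} By \ref{U2}, $\Pi_s$ is unitary for $s\in[0,\tfrac12)$. By the classification of unitary complementary Arthur representations (\cite[Theorem~1.3]{HJLLZ25} or \cite[Theorem~4.1]{AM25}), this forces $u_{\rho_i}(a_i,b_i)\rtimes\one$ to be irreducible for all $i\in I_{odd}$, equivalently that the unitary induction $\Pi_{1/2}$ is irreducible. The expected main obstacle is to show, using \cite[Theorem~4.4]{Ato22b} and the nonvanishing criterion of \cite{Ato22a}, that the good parity induction $u_\rho(a,b)\rtimes\one$ is reducible, i.e.\ that its local Arthur packet is not a singleton, except when $G=\Sp_{2n}(F)$ and $u=\abs^0$. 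The approach will be to exhibit two distinct nonvanishing elements of the packet of $\rho\otimes S_a\otimes S_b\oplus\rho\otimes S_a\otimes S_b(\oplus\,\phi_{\one})$. For example, one can vary the sign character, arguing via Jacquet modules and derivatives that both sign choices give nonzero representations unless $a=b=1$, $\rho=\one$ and $G=\Sp$, where $\abs^0\rtimes\one_{\Sp_0}$ is irreducible. In that exceptional situation the odd segment is exactly $\abs^{\pm1/2}$, which is the permitted exception. This contradicts the choice of $\Delta$.
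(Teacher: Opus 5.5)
Your strategy is the paper's own; it is the outline in the introduction. However, Step~2 as written leaves open exactly the case you need, and for a wrong reason.

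\textbf{Step 2: the endpoint $s=0$.} You exempt $s=0$ from Theorem~\ref{thm:Atobe} as being ``of good parity'' and defer it. Yet the identification $\pi''=\Pi_0$, and the start of the complementary series, rest entirely on irreducibility at $s=0$. The parity is in fact the other way round. For $i\in I_{odd}$ the representation $u_{\rho_i}(a_i,b_i)$ is of good parity; otherwise $u_{\rho_i}(a_i,b_i)\rtimes\one$ would be irreducible by Theorem~\ref{thm:Atobe} and Step~3 would give no contradiction. Hence $\frac{a_i+b_i}{2}-\frac12+s\in\alpha_{\rho_i}-\frac12+s+\Z$ avoids $\alpha_{\rho_i}+\Z$ for every $s\in[0,\frac12)$, including $s=0$. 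So $s=0$ is the purely bad parity endpoint (necessarily, since $\pi''$ is of purely bad parity), and $s=\frac12$ is the good parity one.

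There is a second point at $s=0$. Two factors indexed by $I_{odd}$ have exponents with $|t+t'|=1$, so by Lemma~\ref{lem:Speh-irr}(b) the product $u\abs^{-1/2}\times(u'\abs^{-1/2})^\vee$ is reducible when $u\cong u'$. You must therefore state that the $I_{odd}$ part is multiplicity free. This holds because each odd segment contributes exactly one factor $u_{\rho_\Delta}(a_\Delta,b_\Delta-1)\abs^{-1/2}$, and distinct segments give distinct triples.

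\textbf{Step 1: the construction.} Step~1 is vaguer than necessary. Drop the option of inducing by $u_{\Delta'\abs^{1/2}}=u_\rho(a,b-1)$: it is of good parity, so inducing by it leaves the purely bad parity setting in which Langlands subquotients are well defined. No ordering or ``violation'' test is needed. Let $\Dodd$ (resp.\ $\Deven$) be the set of $\Delta$ with $e(\Delta)<0$ and $m_\Delta(\pi)$ odd (resp.\ $e(\Delta)\leq 0$ and $m_\Delta(\pi)$ even and positive). Take $\pi''$ to be the Langlands subquotient of $\bigtimes_{\Delta\in\Dodd}(\widetilde u_\Delta)^{(m_\Delta(\pi)+1)/2}\times\bigtimes_{\Delta\in\Deven}(\widetilde u_\Delta)^{m_\Delta(\pi)/2}\rtimes\pi$, which is unitary by \ref{U1} and \ref{U3}. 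Comparing $L$-parameters and using $L(\Delta\times\widetilde u_\Delta)=u_{\rho_\Delta}(a_\Delta,b_\Delta-1)\abs^{-1/2}$ from Lemma~\ref{lem:socle} gives the shape of $\Pi_0$ directly.

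\textbf{The reducibility lemma.} Flipping the sign does not always produce a second member. Take $G=\Sp_{2n}(F)$, $\rho=\one$ and $a=b\geq 2$, so $B=0$. There the extended multi-segment with $l=0$ and flipped $\eta$ vanishes by Atobe's non-vanishing criterion, because of the extra entry $([0,0]_{\one},0,1)$. In that case you must vary $l$ instead, e.g.\ $l=1$, which is allowed since $b\geq 2$; this is the paper's $\mathcal{E}_1$. You should also first reduce to $a\geq b$ via the Aubert--Zelevinsky involution so that this parametrization applies.
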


\begin{proof}
Since $m_{\Delta}(\pi)=m_{\Delta^\vee}(\pi)$ and $m_{\Delta}(\pi)$ is even when $e(\Delta)=0$, it suffices to consider segments with $e(\Delta)<0$. Consider the set of segments
\[
\Dodd:=\{\Delta\mid e(\Delta)<0,\ m_{\Delta}(\pi)\text{ is odd}\},\qquad \Deven:=\{\Delta\mid e(\Delta)\le0,\ m_{\Delta}(\pi)\in2\Z_{>0}\}.
\]
 Define $\pi'$ to be the Langlands subquotient of
\[
\bigtimes_{\Delta\in\Dodd}\big(\widetilde{u}_\Delta\big)^{\frac{m_{\Delta}(\pi)+1}{2}}\times\bigtimes_{\Delta\in\Deven}\big(\widetilde{u}_\Delta\big)^{\frac{m_{\Delta}(\pi)}{2}}\rtimes\pi .
\]
If $\pi$ is unitary, then the above induced representation is unitarily induced from a unitary representation, hence semisimple with all subquotients unitary by \ref{U1}. Therefore, the Langlands subquotient $\pi'$ is unitary.

On the other hand, comparing the $L$-parameter, $\pi'$ is also the Langlands subquotient of the induction
\[
\Pi':=\bigtimes_{\Delta\in\Dodd}L(\Delta\times \widetilde{u}_\Delta)\times\bigtimes_{\Delta\in\Dodd}u_\Delta^{\frac{m_{\Delta}(\pi)-1}{2}}\times\bigtimes_{\Delta\in\Deven}u_\Delta^{\frac{m_{\Delta}(\pi)}{2}}\rtimes\one .
\]
Write $\Delta=\St(\rho_\Delta,a_\Delta)\abs^{\frac{1-b_\Delta}{2}}$. Then $L(\Delta\times \widetilde{u}_\Delta)=u_{\rho_\Delta}(a_\Delta,b_\Delta-1)\abs^{-\frac12}$ by Lemma \ref{lem:socle}. We claim that $\Pi'$ is irreducible, and hence $\pi'=\Pi'$. More generally, consider the representations
\[
\Pi'_s:=\bigtimes_{\Delta\in\Dodd}u_{\rho_\Delta}(a_\Delta,b_\Delta-1)\abs^{-\frac12+s}\rtimes\pi_A,\qquad \pi_A:=\bigtimes_{\Delta\in\Dodd\cup\Deven}u_\Delta^{\lfloor\frac{m_{\Delta}(\pi)}{2}\rfloor}\rtimes\one.
\]
We claim that $\Pi_s'$ are irreducible for all $0\leq s<\frac12$ (note that $\Pi'_0=\Pi'$). 

Indeed, by Theorem \ref{thm:BS}, it suffices to show that for any two essentially generalized Speh representations $u=u_{\rho}(a,b)\abs^t$ and $u'= u_{\rho'}(c,d)\abs^{t'}$ corresponding to different indices in the product of $\Pi'_{s}$, the inductions $u \times u', u^{\vee} \times u'$ and $u \rtimes \pmb{1}$ are all irreducible. Since $|t\pm t'| \in \{ 0, \half{1}-s, 1-2s \}$, Lemma \ref{lem:Speh-irr} implies that $u \times u'$ is always irreducible and $u^{\vee} \times u'$ could be reducible only when $s=0$ and $u=u'$ are both factors of the product over $\Dodd$. However, this is impossible since the product over $\Dodd$ is multiplicity free. Finally, the induction $u \rtimes \pmb{1}$ is always irreducible by Theorem \ref{thm:Atobe}. This verifies the claim.

Now $\Pi'_s$ is irreducible for $0\leq s<\frac12$, and $\Pi'_0=\Pi'=\pi'$. Since $\pi'$ is unitary, $\Pi'_s$ is unitary for all $0\leq  s<\frac12$ by \ref{U2}.  By the classification of unitary complementary Arthur representations in \cite[Theorem~1.3]{HJLLZ25} or \cite[Theorem 4.1]{AM25}, the induction $u_{\rho_\Delta}(a_\Delta,b_\Delta-1)\rtimes\pi_A$ must be irreducible for all $\Delta \in \Dodd$. Applying Theorem \ref{thm:BS} again, we see that the (good parity) induction
\[u_{\rho_\Delta}(a_\Delta,b_\Delta-1)\rtimes\one\]
must be irreducible for all $\Delta \in \Dodd$. By Lemma \ref{lem:good parity induction} below, this happens only when $G=\Sp_{2n}(F)$, $\rho_\Delta=\pmb{1}$, $a_{\Delta}=1$ and $b_{\Delta}=2$, so $\Delta=\abs^{-1/2}$. We conclude that $\Dodd$ must be empty if $G_n= \SO_{2n+1}(F)$, and $\Dodd \subseteq \{\abs^{-1/2}\}$ if $G_n= \Sp_{2n}(F)$. This completes the proof of the proposition.
\end{proof}

Finally, we prove the missing piece of the above proof uses extended multi-segments, for which we refer to \cite{Ato22a} and \cite[Section~4.3]{Ato22b}.

\begin{lemma}\label{lem:good parity induction}
Suppose that $\rho$ is self-dual and $a,b \in \Z_{\geq 1}$ such that
$\half{a+b} \in \alpha_{\rho}+\Z$. Then the (good parity) induction
    \[ u_{\rho}(a,b) \rtimes \pmb{1}\]
    is irreducible if and only if $G= \Sp_{2n}(F)$, $\rho$ is trivial and $a=b=1$.
\end{lemma}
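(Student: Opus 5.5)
The plan is to read the statement as the assertion that a certain local Arthur packet is a singleton. Put $A=\half{a+b}-1$ and $B=\half{a-b}$, and let
\[
\psi=(\rho\otimes S_a\otimes S_b)^{\oplus 2}\oplus\phi_{\one},
\]
where $\phi_{\one}$ is trivial (respectively zero) for $\Sp_{2n}(F)$ (respectively $\SO_{2n+1}(F)$). The hypothesis $\half{a+b}\in\alpha_\rho+\Z$ says exactly that $\psi$ is of good parity. By \cite[Theorem~4.4]{Ato22b}, the unitary induction $u_\rho(a,b)\rtimes\one$ is semisimple. Moreover, it is the multiplicity-free sum of those members of $\Pi_\psi$ that are attached to extended multi-segments in which the two rows $([A,B]_\rho,\ell,\eta)$ attached to $\rho\otimes S_a\otimes S_b$ appear as a "paired" block, i.e.\ with the same $\ell$ and opposite signs $\eta$. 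So the task reduces to counting the nonzero, pairwise non-isomorphic representations arising this way.

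\emph{The "if" direction.} For $G=\Sp_{2n}(F)$, $\rho=\one$ and $a=b=1$, the induction is $\abs^0\rtimes\one_{\Sp_0}$ of $\Sp_2(F)=\SL_2(F)$. This is the unitary principal series attached to the trivial character, and it is irreducible. Equivalently, the centralizer of $\psi=3\cdot\one$ in $\SO_3(\BC)$ is connected modulo centre, so $\Pi_\psi$ is a singleton.

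\emph{The "only if" direction.} Here I would exhibit two distinct nonzero extended multi-segments of the allowed shape. Take the two rows $([A,B]_\rho,\ell,\eta)$ and $([A,B]_\rho,\ell,-\eta)$ with $\ell=0$. When $\phi_{\one}$ is present, add the row $([0,0]_{\one},0,\eta_0)$, with $\eta_0$ forced by the sign condition. Then vary the admissible data: the choice of $\eta=\pm1$, and also the $\ell$ between $0$ and $\lfloor b/2\rfloor$ when $b\ge2$. This should produce at least two extended multi-segments.

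For each of them I must check two things.
\begin{itemize}
\item \emph{Nonvanishing.} I would use the criterion of \cite{Ato22a}. For a pair of identical rows $[A,B]$ with opposite signs, the relevant inequalities reduce to conditions between adjacent rows that are automatically satisfied. The only interaction is with the row $[0,0]_{\one}$, and only when $\rho=\one$.
\item \emph{Distinctness.} The two resulting representations should be non-isomorphic. I would detect this via the $L$-data, or via the tempered case $b=1$, where the members are distinguished by their characters of the component group $\mathcal S_\phi$.
\end{itemize}
In the tempered case this is just the statement that $\St(\rho,a)\rtimes\one$ reduces into two pieces exactly when $\rho\otimes S_a$ contributes a $\Z/2$ to $\mathcal S_\phi$. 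This fails only for $\Sp$ with $\rho\otimes S_a=\one$, since then $\psi=3\cdot\one$.

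To pass from $b=1$ to general $b$, an alternative to the full extended multi-segment computation is Aubert duality. Aubert duality preserves (ir)reducibility of the induction and sends $u_\rho(a,b)\rtimes\one$ to $u_\rho(b,a)\rtimes\one$ up to sign. This handles $a=1$ and $b=1$ symmetrically. The genuinely two-dimensional cases $a,b\ge2$ would then still be treated with extended multi-segments, choosing $\ell=1$ and $\ell=0$ to obtain two different members.

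\emph{Main obstacle.} I expect the main obstacle to be the nonvanishing verification in the boundary situations. These are $\rho=\one$ with $\phi_{\one}$ nontrivial and small $A,B$ (such as $(a,b)=(1,3)$ or $(2,2)$), where the extra row $[0,0]_{\one}$ may kill one of the candidate extended multi-segments. In these cases I would first try the candidate with $\ell=0$, choosing its sign $\eta$ so that the sign condition at the $[0,0]$ row holds trivially. Then I would check directly, via the derivative computations of \S\ref{subsec:derivatives}, that a second constituent exists, for instance by showing that $D_{\rho\abs^{A}}$ of the induction has strictly larger length than its value on any single member.
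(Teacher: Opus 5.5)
Your overall route matches the paper's: you use \cite[Theorem~4.4]{Ato22b} to reduce to exhibiting two nonzero $\pi(\mathcal{E})$, check nonvanishing with \cite{Ato22a}, and note that $\abs^0\rtimes\one_{\Sp_0}$ is irreducible. However, the extended multi-segments you write down are wrong, and the step you call automatic fails. In Theorem~4.4 the two added rows are $([A,B]_\rho,\ell,\eta)$ and $([A,B]_\rho,\ell,(-1)^{A-B}\eta)$, with $A-B=b-1$. So the signs are opposite only for even $b$, and \emph{equal} for odd $b$ (already at $b=1$, both copies of $\rho\otimes S_a$ must carry the same character value). With your pairing $(\eta,-\eta)$ and $b$ odd, the two adjacent rows satisfy $\eta_{k+1}\neq(-1)^{A_k-B_k}\eta_k$. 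Atobe's criterion then demands $A-\ell<B+\ell$, i.e.\ $2\ell>b-1$, which is impossible because $\ell\leq\lfloor b/2\rfloor=\frac{b-1}{2}$. The sign condition also picks up a factor $(\eta\cdot(-\eta))^{b}=-1$. For $\SO_{2n+1}(F)$ nothing compensates it. For $\Sp_{2n}(F)$, ``forcing'' $\eta_0=-1$ on the $[0,0]_{\one}$ row takes you outside the family of Theorem~4.4, where that row is fixed as $([0,0]_\one,0,1)$. Thus for, e.g., $(a,b)=(5,3)$ or $(3,3)$, every candidate you list is zero. Replacing $-\eta$ by $(-1)^{A-B}\eta$ repairs this: the conditions between the two rows become equalities, and the sign condition is untouched.

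With that correction your plan goes through, and the boundary issue is settled without any derivative computation.
\begin{itemize}
\item Use Aubert duality to assume $a\geq b$ in all cases, not just to swap $a=1$ and $b=1$. Then $B\geq0$ and the adjacent-row criterion applies.
\item The only interaction is between $([0,0]_\one,0,1)$ and $([A,B]_\one,\ell,\eta)$. For $\eta=1$ you need $0\leq B+\ell$ and $\ell\leq A$; for $\eta=-1$ you need $0<B+\ell$.
\item So $\ell=0$, $\eta=\pm1$ works except when $\rho=\one$ and $a=b$. There $b\geq2$, and the paper replaces $\eta=-1$ by $\ell=1$, $\eta=1$. Your uniform choice $\eta=1$, $\ell\in\{0,1\}$ for $a,b\geq2$ also works, since $A\geq1$.
\item No distinctness check is needed. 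Theorem~4.4 writes the induction as a direct sum indexed by these $\mathcal{E}$, so two nonzero summands already give length at least two.
\end{itemize}
Your treatment of $b=1$ through the component group (a $\Z/2$ from $\rho\otimes S_a$, except when $\psi=3\cdot\one$ for $\Sp_{2n}(F)$), transported to $a=1$ by Aubert duality, is a valid alternative to the paper's uniform extended multi-segment argument.
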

\begin{proof}
    By \cite[Proposition~4.2]{Ato22b}, the induction $u_{\rho}(a,b) \rtimes \pmb{1}$ is multiplicity-free, and its irreducible subquotients are exactly the members of the local Arthur packet $\Pi_{\psi}(G)$, where
    \[ \psi= \begin{cases}
         (\rho \otimes S_{a} \otimes S_b)^{\oplus 2} & \text{ if }G= \SO_{2n+1}(F),\\
         (\rho \otimes S_{a} \otimes S_b)^{\oplus 2}+ 1 \otimes S_1 \otimes S_1 & \text{ if }G= \Sp_{2n}(F).
    \end{cases}\]
    Since the Aubert--Zelevinsky involution commutes with parabolic induction up to semisimplification and sends $u_\rho(a,b)$ to $u_\rho(b,a)$, we may assume that $a \geq b$. Set $A:= \half{a+b}-1$ and $B:= \half{a-b}$. By \cite[Theorem~4.4]{Ato22b}, $u_\rho(a,b)\rtimes\pmb{1}$ decomposes as the direct sum of the non-zero $\pi(\mathcal{E})$, where $\mathcal{E}$ is obtained from the extended multi-segment of $\pmb{1}$ by adding $([A,B]_\rho,l,\eta)$ and $([A,B]_\rho,l,(-1)^{A-B}\eta)$ with $0\leq l\leq \half{b}$ and $\eta\in\{\pm1\}$. Hence it suffices to exhibit two such $\mathcal{E}$ with $\pi(\mathcal{E})\neq 0$.
    
If $G=\SO_{2n+1}(F)$, we take the extended multi-segments $\mathcal{E}_+$ and $\mathcal{E}_-$, where
\[ \mathcal{E}_\pm= \{([A,B]_{\rho},0,\pm1), ([A,B]_{\rho},0, \pm (-1)^{A-B})\}.\]
If $G=\Sp_{2n}(F)$ and $(\rho,a,b) \neq (\pmb{1},1,1)$, we take the following extended multi-segments.
\begin{itemize}
    \item If $\rho \neq \pmb{1}$, or $\rho= \pmb{1}$ and $a>b$, then consider $\{\mathcal{E}_+, \mathcal{E}_-\}$, where
    \[ \mathcal{E}_\pm= \{([A,B]_{\rho},0,\pm1), ([A,B]_{\rho},0, \pm(-1)^{A-B})\} \cup \{([0,0]_{\pmb{1}},0,1)\}.\]
    \item If $\rho =\pmb{1}$ and $a=b$, then $b\geq 2$ since $(a,b) \neq (1,1)$. In this case we consider $\mathcal{E}_+$ and $\mathcal{E}_1$, where
    \begin{align*}
         \mathcal{E}_+&= \{([A,B]_{\rho},0,1), ([A,B]_{\rho},0, (-1)^{A-B})\} \cup \{([0,0]_{\pmb{1}},0,1)\},\\
         \mathcal{E}_1&= \{([A,B]_{\rho},1,1), ([A,B]_{\rho},1, (-1)^{A-B})\} \cup \{([0,0]_{\pmb{1}},0,1)\}.
    \end{align*}
\end{itemize}
By the non-vanishing criterion \cite[Theorems~3.7 and 4.4]{Ato22a}, one checks that the representations attached to the above extended multi-segments are non-zero. Hence $u_\rho(a,b)\rtimes\pmb{1}$ is reducible in these cases. Conversely, if $G=\Sp_{2n}(F)$ and $(\rho,a,b)=(\pmb{1},1,1)$, then $\abs^0\rtimes\one_{\Sp_0}$ is irreducible. This completes the proof of the lemma.
\end{proof}

\subsection{Proof of \texorpdfstring{Proposition~\ref{prop:reduction}}{Proposition 4.2}}

Suppose that $\pi$ is a unitary representation of purely bad parity which does not satisfy condition $(\mathrm{A}')$. We use the notation $\Dodd$, $\Deven$ of the proof of Proposition~\ref{prop:parity}. By Proposition~\ref{prop:parity}, $\Dodd$ is either empty or equal to $\{\abs^{-1/2}\}$ (the latter only if $G=\Sp_{2n}(F)$). Hence condition (a) of Lemma~\ref{lem:pA-type} holds, so condition (b) must fail. Since (b) is automatic when $e(\Delta)=-\frac12$, the set
\[
\mathcal D:=\{\Delta\in\Deven\mid e(\Delta)<-\tfrac12,\ m_{\Delta}(\pi)>m_{\Delta\abs^1}(\pi)\}
\]
is nonempty. We fix $\Delta_0\in\mathcal D$ such that $e(\Delta_0)=\max_{\Delta\in\mathcal D}e(\Delta)$, and set
\[
k_0:=\frac{m_{\Delta_0}(\pi)}{2}-\Big\lfloor\frac{m_{\Delta_0\abs^1}(\pi)}{2}\Big\rfloor-1\ \geq 0 .
\]
Here the floor function is needed only when $\Delta_0= \abs^{-3/2}$. Let $\pi_0:=\bigtimes_{\Delta'\in\Dodd}\Delta'\rtimes\one$, and let $\pi'$ be the Langlands subquotient of
\[
\big(\widetilde{u}_{\Delta_0}\big)^{k_0}\times\bigtimes_{\Delta\in\Deven\setminus\{\Delta_0\}}\big(\widetilde{u}_\Delta\big)^{\frac{m_{\Delta}(\pi)}{2}}\rtimes\pi .
\]
As in the proof of Proposition~\ref{prop:parity}, $\pi'$ is unitary by \ref{U1}. Comparing the $L$-parameter by Lemma~\ref{lem:socle}, $\pi'$ is also the Langlands subquotient of
\[
\Delta_0^2\times u_{\Delta_0}^{\,k_0+\lfloor m_{\Delta_0|\cdot|^1}(\pi)/2\rfloor}\times\bigtimes_{\Delta\in\Deven\setminus\{\Delta_0,\Delta_0\abs^1\}}u_\Delta^{\frac{m_{\Delta}(\pi)}{2}}\rtimes\pi_0.
\]
Then it is clear from this expression that $\pi'$ is a minimal counter example (with respect to $\Delta_0$). This completes the proof of the proposition.
 \qed

\section{Minimal counter examples are not unitary}\label{sec:main}

The goal of this section is to prove the following theorem.

\begin{thm}\label{thm:reduced}
Suppose that $\pi$ is a minimal counter example; see Definition \ref{def:reduced}. Namely $\pi$ is the Langlands subquotient of
\[
\Delta\times\Delta\times\bigtimes_{i\in I}u_{\rho_i}(a_i,b_i)\rtimes\pi_0,
\]
where $\Delta=\seg{\rho}{x}{y}$ with $e(\Delta)<-\frac12$ and $u_{\rho_i}(a_i,b_i)\not\cong \widetilde{u}_\Delta$ for all $i\in I$. Then $\pi$ is not unitary.
\end{thm}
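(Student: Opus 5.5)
We argue by contradiction, following step (3) of the outline in the introduction. Suppose $\pi$ is unitary. Write $\Delta=\St(\rho,a)\abs^{s}$ with $s=\half{1-b}$, so $b\geq 3$ and $\widetilde{u}_\Delta=u_\rho(a,b-2)=u_{\Delta\abs^1}$. By \ref{U1} and \ref{U3}, the induction $\widetilde{u}_\Delta\rtimes\pi$ is semisimple, so every irreducible subquotient is a subrepresentation. Let $\tau$ be its Langlands subquotient. Its $L$-data are obtained from those of $\pi$ by adding the $b-2$ segments $\Delta\abs^1,\dots,\Delta\abs^{b-2}$ twice each. Hence $\tau$ is the Langlands subquotient of
\[
u_\Delta\times u_\Delta\times\bigtimes_{i\in I}u_{\rho_i}(a_i,b_i)\rtimes\pi_0 ,
\]
which is irreducible by Theorem \ref{thm:Moeglin} (together with Proposition \ref{prop:A prime unitary} when $\pi_0$ is non-trivial). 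So $\tau$ equals this induction and satisfies $(\mathrm{A}')$. It then suffices to prove that $\tau$ does \emph{not} embed into $\widetilde{u}_\Delta\rtimes\pi$, which contradicts semisimplicity.

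To separate $\tau$ from the subrepresentations of $\widetilde{u}_\Delta\rtimes\pi$, we choose a composition of derivatives $D$ adapted to the extra copy of $\Delta$. Set $x=x(\Delta)$ and $y=y(\Delta)$. Since $\pi\hookrightarrow\Delta\times\Delta\times\cdots\rtimes\pi_0$, and since $\Delta\hookrightarrow\rho\abs^{x}\times\cdots\times\rho\abs^{y}$, every subrepresentation $\sigma\hookrightarrow\widetilde{u}_\Delta\rtimes\pi$ satisfies
\[
\sigma\hookrightarrow \widetilde{u}_\Delta\times\Delta\times\Delta\times\cdots .
\]
To obtain a statement that holds uniformly for all such $\sigma$, we first move $\Delta\times\Delta$ to the left of $\widetilde{u}_\Delta$. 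The segment $\Delta$ is unlinked with $\widetilde{u}_\Delta$ in the relevant sense, because $\Delta\times\widetilde{u}_\Delta$ has socle $u_\rho(a,b-1)\abs^{-1/2}$ by Lemma \ref{lem:socle}. We use Lemma \ref{lem:Delta uj+} to commute, or to embed, $\Delta\times u$ past the factors $u=u_{\rho_i}(a_i,b_i)$ whose lowest segment has exponent at most $e(\Delta)$. We first rearrange the standard embedding of $\pi$ itself, so that
\[
\pi\hookrightarrow \Delta\times\Delta\times\Pi_1
\]
for a suitable $\Pi_1$, possibly after first pulling out lowest segments $\Delta_u$ of factors with $e(\Delta_u)\le e(\Delta)$. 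Frobenius reciprocity then gives $D_{\rho\abs^{y}}^{(2)}\circ\cdots\circ D_{\rho\abs^{x}}^{(2)}(\pi)\neq0$, or a refined composition $D$ that also accounts for the pulled-out $\Delta_u$'s. Next we apply the Leibniz rule \eqref{eq:Leibniz} to $\widetilde{u}_\Delta\rtimes\pi$. The terms in which $D$ acts trivially on $\widetilde{u}_\Delta$ give a nonzero contribution, so $D(\widetilde{u}_\Delta\rtimes\pi)$ contains $\widetilde{u}_\Delta\rtimes D(\pi)$. This alone does not give $D(\sigma)\neq0$ for every subrepresentation $\sigma$. To get that, we use the embedding $\sigma\hookrightarrow(\rho\abs^{x})^2\times\cdots$, obtained by commuting $\widetilde{u}_\Delta$ to the right of $\Delta\times\Delta$ inside $\widetilde{u}_\Delta\times\Delta\times\Delta\times\cdots$. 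This is legitimate because $\rho\abs^{x}$ is not in the left derivative support of $\widetilde{u}_\Delta$: by Lemma \ref{lem:consecutive der Speh}, the only left derivative of $\widetilde{u}_\Delta$ starts at $x+1$.

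It remains to show $D(\tau)=0$. Apply the Leibniz rule to $\tau=u_\Delta\times u_\Delta\times\bigtimes u_{\rho_i}(a_i,b_i)\rtimes\pi_0$. By Lemma \ref{lem:consecutive der Speh}, each chain of consecutive left or right derivatives of a Speh factor must start at the top of its lowest segment (resp.\ its negative). This forces each $u_\Delta$ to absorb at most one copy of the consecutive string $\rho\abs^{x},\dots,\rho\abs^{y}$ in $D$. The hypothesis $u_{\rho_i}(a_i,b_i)\not\cong\widetilde{u}_\Delta$, together with the choice $\Delta_0$ of maximal exponent, is what prevents the other factors from absorbing the second copy. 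Indeed, a factor with the same lowest segment as $\Delta$ would have to be $u_\Delta$ itself; the factors with lowest segment $\Delta\abs^1$ are excluded by hypothesis; and the remaining factors cannot start at $x$ or end at $-y$. The main obstacle is to design $D$ uniformly, or in a small number of cases, so that both halves hold simultaneously. The delicate cases are factors $u_{\rho_i}(a_i,b_i)$ whose lowest segment $\Delta_u$ shares the endpoint $x$ or overlaps $\Delta$, the right derivatives $R_{\rho\abs^{-z}}$ coming from the contragredient side, and the case $\Delta=\abs^{-3/2}$ with $\pi_0$ non-trivial. We expect to treat these case by case: we lengthen $D$ by prefixing the derivatives that peel off the interfering $\Delta_u$'s, which uses Lemma \ref{lem:Delta uj+}(a), and we check on the $\tau$ side that the count of available $\rho\abs^{x}$ is exceeded by one.
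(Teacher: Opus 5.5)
Your framework matches the paper's. You use \ref{U1} to make $\widetilde u_\Delta\rtimes\pi$ semisimple, then look for a composition $D$ of derivatives with $D(\sigma)\neq0$ for every subrepresentation $\sigma$ and $D(\tau)=0$. The execution, however, has a genuine gap.

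\textbf{The representation $\tau$ is misidentified.} Since $\phi_{u_\Delta}=\phi_\Delta+\phi_{\widetilde u_\Delta}+\phi_{\Delta^\vee}$, the part $2\phi_\Delta+2\phi_{\widetilde u_\Delta}+2\phi_{\Delta^\vee}$ of the $L$-parameter of $\widetilde u_\Delta\rtimes\pi$ equals $\phi_{u_\Delta}+\phi_{u_\Delta}^\vee$. Hence $\tau=u_\Delta\times\bigtimes_{i\in I}u_{\rho_i}(a_i,b_i)\rtimes\pi_0$ contains a \emph{single} copy of $u_\Delta$; your induction with two copies lives on a group of the wrong rank. Moreover, in the Leibniz rule \eqref{eq:Leibniz} this single $u_\Delta$ absorbs \emph{two} copies of each $\rho\abs^{w}$ with $y\leq w\leq x$. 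One comes from left derivatives along its lowest segment $\Delta$, the other from right derivatives along its highest segment $\Delta^\vee$. Two smaller issues:
\begin{itemize}
\item Lemma \ref{lem:socle} shows that $\Delta\times\widetilde u_\Delta$ is reducible, not that the factors commute.
\item $\pi$ is only the Langlands subquotient of $\Delta\times\Delta\times\bigtimes_i u_{\rho_i}(a_i,b_i)\rtimes\pi_0$. Embedding $\pi$ needs an argument like Proposition \ref{prop:embed}.
\end{itemize}

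\textbf{The choice of $D$ is backwards, and this is the fatal point.} Your design principle is to use exponents that $\widetilde u_\Delta$ does not see. By Lemma \ref{lem:consecutive der Speh}, $\widetilde u_\Delta$ is left $\rho\abs^{w}$-reduced and right $\rho\abs^{-w}$-reduced for all $w\neq x+1$. So for any composition $D$ of derivatives at such exponents, $D(\widetilde u_\Delta\rtimes\pi)=\widetilde u_\Delta\rtimes D(\pi)$, and nothing extra is available to $\sigma$. Meanwhile, in $\tau$ the factor $u_\Delta$ supplies exactly what $\Delta\times\Delta$ supplies in $\pi$, and the remaining factors are common. Your promised excess ``by one'' therefore has no source.

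A concrete counterexample: take $G=\SO_5(F)$, $\rho=\one_{\GL_1}$, $\Delta=\abs^{-1}$, $I=\emptyset$. Then $\widetilde u_\Delta=\abs^0$ and $\tau=\one_{\GL_3}\rtimes\one_{\SO_1}$. The derivative $D^{(2)}_{\abs^{-1}}(\tau)$ contains $L_{\abs^{-1}}\circ R_{\abs^{1}}(\one_{\GL_3})\rtimes\one=\abs^0\rtimes\one\neq0$, so your $D$ does not vanish on $\tau$.

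\textbf{The missing idea} is to differentiate at $\rho\abs^{x+1}$, the top of the lowest segment $\Delta\abs^1$ of $\widetilde u_\Delta$.
\begin{itemize}
\item There $L_{\rho\abs^{x+1}}(\widetilde u_\Delta)\neq0$.
\item In contrast, $u_\Delta$ is left $\rho\abs^{x+1}$-reduced and right $\rho\abs^{-x-1}$-reduced, because that segment is locked behind $\Delta$.
\item Let $s=\#\{i\mid\rho_i\cong\rho,\ x_i=x+1\}$. This gives order $2s+1$ on every $\sigma$ against at most $2s$ on $\tau$. In the example above, $D_{\abs^0}$ already separates.
\end{itemize}

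\textbf{The remaining work} is to extract $(\rho\abs^{x+1})^{2s}$ from $\pi$ ahead of $\Delta^2$.
\begin{itemize}
\item This extraction fails when some $\Delta_i=\seg{\rho}{x+1}{y_i}$ with $y_i>y$ is linked with $\Delta$.
\item In that case one needs the longer chain $D^{(m_{z-1}+1)}_{\rho\abs^{z-1}}\circ\cdots\circ D^{(m_{x+1}+1)}_{\rho\abs^{x+1}}$ together with the inequality $y+1<z$. This inequality is where the hypothesis $u_i\not\cong\widetilde u_\Delta$ enters.
\item One must also handle the extra $\rho\abs^{\pm1/2}$ absorbed by $\pi_0=\abs^{1/2}\rtimes\one$. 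This forces a second derivative, at $x$ or at $x+2$.
\end{itemize}
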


The main idea is to show that the irreducible representation satisfying condition $(\mathrm{A}')$
\begin{equation}\label{eq tau}
    \tau:=u_\Delta\times\bigtimes_{i\in I}u_{\rho_i}(a_i,b_i)\rtimes\pi_0,
\end{equation}
which is the Langlands subquotient of $\widetilde{u}_\Delta\rtimes\pi$, cannot be a subrepresentation of $\widetilde{u}_\Delta\rtimes\pi$. We will design a composition of derivatives which is nonzero on every irreducible subrepresentation $\sigma$ of $\widetilde{u}_\Delta\rtimes\pi$, but vanishes on $\tau$. The main idea for designing these derivatives is that the lowest segment of $\widetilde{u}_\Delta$ is ``locked" in $u_{\Delta}$ for $\tau$. For example, we have $L_{\rho\abs^{x+1}} (\widetilde{u}_\Delta) \neq 0$ but $L_{\rho\abs^{x+1}} (u_{\Delta}) =0$. Thus, the derivatives of the lowest segment of $\widetilde{u}_\Delta$ will contribute to that of a subrepresentation of $\widetilde{u}_\Delta\rtimes\pi$, but not to that of $\tau$.

\subsection{Notation}
Now we fix the notation for the rest of the section. Thus, we fix a minimal counter example from Theorem \ref{thm:reduced}. For $i\in I$, set
\begin{align*}
u_i&:=u_{\rho_i}(a_i,b_i),\\
u_i^{-}&:=u_{\rho_i}\big(a_i,\lfloor\tfrac{b_i}{2}\rfloor\big)\abs^{\frac{\lfloor b_i/2\rfloor-b_i}{2}},\\
u_i^{+}&:=u_{\rho_i}\big(a_i,\lfloor\tfrac{b_i}{2}\rfloor\big)\abs^{-\frac{\lfloor b_i/2\rfloor-b_i}{2}},\\
u_i^{0}&:=\begin{cases}\text{the trivial representation of }\GL_0(F)&\text{if } b_i\text{ is even},\\ u_{\rho_i}(a_i,1)=\St(\rho_i,a_i)&\text{otherwise}.\end{cases}
\end{align*}
Thus $u_i^-$ consists of the segments of $u_i$ with negative center, $u_i^+$ of those with positive center, and
\[
u_i\hookrightarrow u_i^{-}\times u_i^{0}\times u_i^{+}.
\]
Here if $b_i=1$, then $u_i^{\pm}$ are trivial representation of $\GL_0(F)$. Set $\ptemp:=\bigtimes_{i\in I}u_i^0\rtimes\one$, which is irreducible and tempered. Also, set
\[ \pi_{0,temp}:= \bigtimes_{i\in I}u_i^0\rtimes \pi_0,\]
which is an irreducible induction. We also set
\[
\Delta_i:=\seg{\rho_i}{x_i}{y_i},\qquad x_i:=\frac{a_i-b_i}{2},\quad y_i:=1-\frac{a_i+b_i}{2},
\]
the lowest segment of $u_i$, and
\[
\widetilde u_i^{-}:=\begin{cases}u_{\rho_i}\big(a_i,\lfloor\tfrac{b_i-2}{2}\rfloor\big)\abs^{\frac{\lfloor (b_i-2)/2\rfloor-(b_i-2)}{2}}&\text{if } b_i\geq2,\\ \text{the trivial representation of }\GL_0(F)&\text{otherwise},\end{cases}
\]
so that $\widetilde u_i^-=u_{\rho_i}(a_i,b_i-2)^-$ and, for $b_i\geq 2$,
\[
u_i^{-}\hookrightarrow\Delta_i\times\widetilde u_i^{-}.
\]
Note that the highest segment of $u_i^-$ (if $b_i\geq2$) has center $-1$ or $-\frac12$, and its lowest segment is $\Delta_i$, with center $\frac{1-b_i}{2}$. 

Finally, we decompose $I=I_1 \sqcup I_2$, where 
\[
I_2:=\Big\{i\in I\ \Big|\ \rho_i\cong\rho,\ x_i=x+1,\ y_i>y\Big\}=\{i\in I\mid u_i^->_{\mathrm{strong}}\Delta,\ x_i=x+1\},
\]
and $I_1:=I\setminus I_2$.

\subsection{Embeddings into standard modules}
In this subsection, we prove a technical proposition that allows us to construct various embeddings.

\begin{prop}\label{prop:embed}
    Suppose that $\Delta^1,\dots,\Delta^f$ are segments with $e(\Delta^1)\leq\cdots\leq e(\Delta^f)\leq -\half{1}$ and the Langlands subquotient $\pi$ of an induction
    \[ \Delta^1\times\cdots\times\Delta^f\times\bigtimes_{i}u_i\rtimes\pi_0\]
    is of purely bad parity, where $\pi_0=\pmb{1}_{\SO_{1}}$ if $G_n= \SO_{2n+1}(F)$, and $\pi_0 \in \{ \pmb{1}_{\Sp_{0}}, \abs^{1/2}\rtimes \pmb{1}_{\Sp_0}\}$ if $G_n=\Sp_{2n}(F)$. Set $\pi_{0,temp}:= \bigtimes_{i \in I}u_i^0 \rtimes \pi_0$.  Then there is an embedding
\[
\Delta^1\times\cdots\times\Delta^f\times\bigtimes_{i\in I}(u_i^-)^2\rtimes\pi_{0,temp}\hookrightarrow M(\pi),
\]
where the product $\bigtimes_{i\in I}(u_i^-)^2$ can be taken in an arbitrary order. In particular, $\pi$ is the unique irreducible subrepresentation of the left hand side.
\end{prop}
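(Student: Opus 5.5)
The plan is to build the embedding by first unfolding each $u_i$ into its negative, central and positive parts, then converting the positive parts into negative ones on the classical-group side, and finally commuting everything into the order of the standard module. Fix an index set order. Since $u_i\hookrightarrow u_i^-\times u_i^0\times u_i^+$, exactness of parabolic induction gives
\[
\Delta^1\times\cdots\times\Delta^f\times\bigtimes_i u_i\rtimes\pi_0\hookrightarrow \Delta^1\times\cdots\times\Delta^f\times\bigtimes_i (u_i^-\times u_i^0\times u_i^+)\rtimes\pi_0 .
\]
However, the statement concerns the left-hand side with $(u_i^-)^2$ and $\pi_{0,temp}$. So the cleaner route is to go through $M(\pi)$ directly rather than through the $u_i$.

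The key steps are as follows.

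First, identify the $L$-data of $\pi$. Its $L$-parameter is $\sum\phi_{\Delta^j}+\sum\phi_{u_i}+\phi_{\pi_0}+\text{duals}$. The negative-center segments are the $\Delta^j$ together with two copies of each segment of $u_i^-$ (one from $u_i^-$ itself and one as the dual of a segment of $u_i^+$). The tempered part is $\pi_{0,temp}$, up to the $\abs^{-1/2}$ from $\pi_0$ in the $\Sp$ case, which is treated as a negative segment or kept inside $\pi_0$. Thus $M(\pi)=\Sigma_1\times\cdots\times\Sigma_N\rtimes\pi_{0,temp}$ (or with $\pi_0$ adjusted), where the $\Sigma_k$ are all of these segments ordered by nondecreasing exponent.

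Second, embed each $u_i^-$ into the product of its segments in increasing exponent order. Since $u_i^-$ is a Langlands quotient of a ladder, $u_i^-\hookrightarrow M(u_i^-)$, i.e. $u_i^-\hookrightarrow \Delta_i\times\cdots$ with exponents increasing, by iterating $u_i^-\hookrightarrow\Delta_i\times\widetilde u_i^-$. Substituting, the left-hand side embeds into a product of all the segments $\Delta^j$ and two copies of the segments of each $u_i^-$, induced from $\pi_{0,temp}$, but in a shuffled order.

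Third, reorder this product into the standard-module order. Two segments with different exponents that appear in the wrong order (larger exponent first) can be swapped when the product is irreducible. For segments, irreducibility is equivalent to not being linked. When they are linked, we instead use that $\Delta\times\Delta'\hookrightarrow\Delta'\times\Delta$ whenever $e(\Delta)\ge e(\Delta')$ fails in the right way, which is the standard Zelevinsky fact that $\Delta'\times\Delta$ (with $e(\Delta')<e(\Delta)$, i.e. the standard order) contains $L(\Delta',\Delta)$ as socle. More simply, whenever $x(\Delta)>x(\Delta')$ and $y(\Delta)>y(\Delta')$ we have a nonzero map in the required direction; a cleaner tool is Lemma~\ref{lem:Delta uj+} together with Lemma~\ref{lem:half-commute}. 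Since the highest segments of all $u_i^-$ have center $-1$ or $-\frac12$, the $u_i^-$ commute with each other, which gives the ``arbitrary order'' claim. Lemma~\ref{lem:Delta uj+} then lets us peel off lowest segments. Concretely, I will induct on the total number of segments. I take a segment $\Sigma$ of minimal exponent: either some $\Delta^1$, or the lowest segment $\Delta_i$ of some $u_i^-$, after moving that $u_i^-$ to the front using commutativity. Lemma~\ref{lem:Delta uj+}(a) and (b) move $\Delta_i$ past the $\Delta^j$'s that precede it, since all $e(\Delta^j)\le-\frac12$. Factoring $\Sigma\times(\text{rest})$ and applying induction to the rest, which has the same shape with $u_i^-$ replaced by $\widetilde u_i^-$ or with $\Delta^1$ removed, yields the embedding into $M(\pi)$. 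The two copies of $u_i^-$ are handled one at a time.

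Finally, $M(\pi)$ has unique irreducible subrepresentation $\pi$. The left-hand side is nonzero and embeds into $M(\pi)$, so its socle is $\pi$.

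I expect the main obstacle to be the reordering step. The difficulty is guaranteeing that at each stage the minimal-exponent segment can be brought to the front by an injection rather than merely a nonzero map. The delicate case is when $\Delta^j$ and $\Delta_i$ have equal exponents but are linked, which requires Lemma~\ref{lem:Delta uj+}(a). A second subtlety is the $\Sp$ case with $\pi_0=\abs^{1/2}\rtimes\one$, where $\abs^{-1/2}$ must be placed last among the negative segments. I would handle this by noting that $\abs^{-1/2}\rtimes\one\cong\abs^{1/2}\rtimes\one$ is irreducible, so it stays absorbed in $\pi_0$.
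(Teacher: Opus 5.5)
Your concrete argument (induction on the number of negative segments, peeling off a segment of minimal exponent, namely either some $\Delta^j$ or the lowest segment $\Delta_i$ of some $u_i^-$, which you bring forward via Lemma~\ref{lem:half-commute} and move past the $\Delta^j$'s via Lemma~\ref{lem:Delta uj+}, and then using $\Sigma\rtimes M(\pi^-)=M(\pi)$) is essentially the paper's proof; the only real difference is that you absorb the $\abs^{-1/2}$ coming from $\pi_0$ into $\pi_{0,temp}$ at the outset via $\abs^{-1/2}\rtimes\one\cong\abs^{1/2}\rtimes\one$, where the paper treats it as a separate case. Two cautions: drop the detour through the full decompositions $M(u_i^-)$ and the swapping of linked segments, since for linked $\Delta,\Delta'$ there is no injection between $\Delta\times\Delta'$ and $\Delta'\times\Delta$ (their socles differ); and peel off both copies of $\Delta_i$ in a single inductive step, as the paper does, so that the remainder has the form $(\widetilde u_i^-)^2$ required by the induction hypothesis.
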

\begin{proof}
    The order of the product $\bigtimes_{i\in I}(u_i^-)^2$ can be taken arbitrarily by Lemma \ref{lem:half-commute}, since the center of the highest segment of each $u_i^-$ is either $-1$ or $-\half{1}$. Write 
    \[ \pi= L(\Delta_1',\ldots, \Delta_{r}'; \pi_{temp}),\]
    where recall that we require $e(\Delta_1') \leq \cdots \leq e(\Delta_r') <0$. We apply induction on $r$.
    Suppose that $r=0$. Then we have  $\pi=\pi_{temp}=M(\pi)$ and the embedding trivially holds.  From now on we assume that $r \geq 1$.

 We first deal with the easier case that $\Delta_1'= \Delta^k$ for some $1 \leq k \leq f$ (so $e(\Delta^1)=\cdots=e(\Delta^k)$). By relabeling the $\Delta^i$ we may assume that $k=1$.
 Then set 
 \[\pi^-= L(\Delta_2',\ldots, \Delta_{r}'; \pi_{temp}).\]
The induction hypothesis for $\pi^-$ gives 
\[\Delta^1 \times \left(\Delta^2\times\cdots\times\Delta^f\times\bigtimes_{i\in I}(u_i^-)^2\rtimes\pi_{0,temp} \right)\hookrightarrow \Delta^1 \rtimes M(\pi^-)= M(\pi). \]
From now on, we assume that $\Delta_1'\neq\Delta^i$ for all $i$, so that $e(\Delta_1')\leq  e(\Delta^1)$.

Next, we deal with the case that $\Delta_1'=\Delta_j$ for some $j \in I$. In this case, we have $m_{\Delta_1'}(\pi)\geq 2$, so we may assume that $\Delta_2'=\Delta_1'$ by relabeling if necessary.
Set 
\[ k:= \max(\{0\}\sqcup \{1 \leq i \leq f\ | \ e(\Delta^i)=e(\Delta_1') \} ).\]
Then, applying Lemma \ref{lem:Delta uj+} repeatedly (with $u=u_j^-$, in which case $u^{\flat}=\widetilde{u}_j^-$), we obtain injections
\begin{align*}
\bigtimes_{1 \leq i \leq k} \Delta^i \times \bigtimes_{k+1 \leq i \leq f} \Delta^i  \times (u_j^-)^2 &\hookrightarrow \bigtimes_{1 \leq i \leq k} \Delta^i  \times (u_j^-)^2 \times \bigtimes_{k+1 \leq i \leq f} \Delta^i  \\
& \hookrightarrow \bigtimes_{1 \leq i \leq k} \Delta^i  \times (\Delta_j)^2 \times  (\widetilde{u}_j^-)^2 \times \bigtimes_{k+1 \leq i \leq f} \Delta^i  \\
&=(\Delta_j)^2 \times \bigtimes_{1 \leq i \leq k} \Delta^i  \times    \bigtimes_{k+1 \leq i \leq f} \Delta^i  \times (\widetilde{u}_j^-)^2.
\end{align*}
Now put 
\[ \pi^-:= L(\Delta_3',\ldots, \Delta_{r}'; \pi_{temp}).\]
The above computation and the induction hypothesis for $\pi^-$ give 
 \begin{align*}
&\Delta^1\times\cdots\times\Delta^f\times\bigtimes_{i\in I}(u_i^-)^2\rtimes\pi_{0,temp}\\
&\hookrightarrow (\Delta_j)^{2}\times \left(\Delta^1\times\cdots\times\Delta^f \times (\widetilde{u}_j^-)^2\times\bigtimes_{i\in I \setminus\{j\}}(u_i^-)^2\rtimes\pi_{0,temp}\right)\\
&\hookrightarrow (\Delta_j)^{2} \times M(\pi^-)\\
&= M(\pi).
 \end{align*}

Finally, we deal with the remaining case: $\Delta_1' \neq \Delta^i$ for all $i$. In this case, the segment $\Delta_1'$ must come from $\pi_0$, so $\Delta_1'= \abs^{-1/2}$ and $m_{\Delta_1'}(\pi)=1$. If $r>1$, then we can swap $\Delta_1'$ with $\Delta_2'$. Since $\Delta_2'$ must equal to $\Delta^i$ for some $1 \leq i \leq f$ or $\Delta_j$ for some $j \in I$, we reduce the situation to the previous cases. If $r=1$, then $f=0$, and the induction is irreducible by Proposition \ref{prop:A prime unitary}. This completes the proof of the proposition.
\end{proof}

\subsection{The case \texorpdfstring{$I_2=\emptyset$}{I2 empty}}

In this subsection we prove the special case that $I_2$ is empty. We further divide this case into three subcases:
\begin{enumerate}
    \item [(i)] if $G_n=\Sp_{2n}(F)$, $\pi_0=\abs^{1/2} \rtimes \one$ and $\rho \cong \pmb{1}_{\GL_1}$, then $x+1 \not\in \{\pm \half{1}\}$ (Proposition~\ref{prop:empty});
    \item [(ii)] $G_n=\Sp_{2n}(F)$, $\pi_0=\abs^{1/2} \rtimes \one$, $\rho \cong \pmb{1}_{\GL_1}$, and  $x+1 \in \{\pm \half{1}\}$ but $\Delta \neq \abs^{-3/2}$ (Proposition~\ref{prop:empty-half});
    \item [(iii)] $G_n=\Sp_{2n}(F)$, $\pi_0=\abs^{1/2} \rtimes \one$, and $\Delta = \abs^{-3/2}$ (Proposition~\ref{prop:empty-32}).
\end{enumerate}

\begin{prop}\label{prop:empty}
Suppose that $\pi$ is as in Theorem~\ref{thm:reduced} with $I_2=\emptyset$. If $G=\Sp_{2n}(F)$, $\pi_0=\abs^{1/2}\rtimes\one_{\Sp_0}$ and $\rho\cong\one_{\GL_1}$, assume moreover that $x+1\neq\pm\frac12$. Let
\[
s:=|\{i\in I\mid \rho_i\cong\rho,\ x_i=x+1\}|.
\]
\begin{enumerate}
    \item [(a)]If $\sigma\hookrightarrow \widetilde{u}_\Delta\rtimes\pi$, then $D^{(2s+1)}_{\rho\abs^{x+1}}(\sigma)\neq0$. 
    \item [(b)] Let $\tau$ be defined in \eqref{eq tau}. Then $D^{(2s+1)}_{\rho\abs^{x+1}}(\tau)=0$.
\end{enumerate}
In particular, $\pi$ is not unitary.
\end{prop}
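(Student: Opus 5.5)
Write $z:=x+1$ and $\widetilde{u}:=\widetilde{u}_\Delta=u_\rho(a,-2s_\Delta-1)$, where $s_\Delta=e(\Delta)$. The lowest segment of $\widetilde u$ is $\Delta\abs^1=\seg{\rho}{x+1}{y+1}$, so by Lemma~\ref{lem:consecutive der Speh} the only left derivative of $\widetilde u$ at the exponent $\rho\abs^{z}$ is of order one, i.e.\ $L_{\rho\abs^z}(\widetilde u)\neq0$ and $L^{(2)}_{\rho\abs^z}(\widetilde u)=0$.

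For part (a), I will find an embedding of $\sigma$ into an induced representation whose leftmost block is $(\rho\abs^{z})^{2s+1}$, and then apply Frobenius reciprocity (i). Let $J:=\{i\in I\mid \rho_i\cong\rho,\ x_i=z\}$, so that $|J|=s$. Proposition~\ref{prop:embed} gives
\[
\pi\hookrightarrow \Delta\times\Delta\times\bigtimes_{i\in J}(u_i^-)^2\times\bigtimes_{i\in I\setminus J}(u_i^-)^2\rtimes\pi_{0,temp},
\]
where the $u_i^-$ with $i\in J$ may be placed first, by Lemma~\ref{lem:half-commute}. For $i\in J$ we have $u_i^-\hookrightarrow\Delta_i\times\widetilde u_i^-$ with $x(\Delta_i)=z$, and $\Delta_i\hookrightarrow\rho\abs^{z}\times\seg{\rho}{z-1}{y_i}$.

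The obstacle is moving the $2s$ copies of $\rho\abs^z$ past $\Delta\times\Delta$. Since $I_2=\emptyset$, every $i\in J$ has $y_i\le y$, so $\Delta_i\supseteq\Delta\abs^1$. Then $\Delta_i$ and $\Delta$ are not linked, because $\Delta\cup\Delta_i$ fails to be a segment extending both. Hence $\Delta\times\Delta_i\cong\Delta_i\times\Delta$. A cleaner route is to take the left derivative directly: $L^{(2s)}_{\rho\abs^z}$ of $\bigtimes_{i\in J}(u_i^-)^2$ is nonzero, and $\Delta$ is left $\rho\abs^z$-reduced since $x(\Delta)=x\neq z$. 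I would commute the $u_i^-$ ($i\in J$) in front of $\Delta^2$ by Theorem~\ref{thm:Tadic}, checking that $u_i^-\times\Delta$ is irreducible; this is where $I_2=\emptyset$ is used, since it rules out $q(\Delta)<_{\mathrm{strong}}q(u_i^-)$.

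Next, $\sigma\hookrightarrow\widetilde u\rtimes\pi$, and I embed $\widetilde u\hookrightarrow\rho\abs^z\times(\cdots)$. I then commute $\widetilde u$ past the blocks $(u_i^-)^2$ for $i\in J$, or alternatively apply the Leibniz rule to the Jacquet module of $\widetilde u\times\bigtimes_{i\in J}(u_i^-)^2\times\cdots$ to collect $2s+1$ copies of $\rho\abs^z$. Together with the embedding of $\pi$ above, this produces an embedding of $\sigma$ whose leftmost block is $(\rho\abs^{z})^{2s+1}$, and (i) then gives $D^{(2s+1)}_{\rho\abs^{z}}(\sigma)\neq0$.

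For part (b), I apply the Leibniz rule \eqref{eq:Leibniz} to $\tau=u_\Delta\times\bigtimes_i u_i\rtimes\pi_0$ and count the available copies of $\rho\abs^z$. By Lemma~\ref{lem:consecutive der Speh}:
\begin{itemize}
\item $u_\Delta$ has lowest segment with top $x\neq z$, so $L_{\rho\abs^z}(u_\Delta)=0$; its right derivative at $\rho\abs^{-z}$ needs bottom $y=-z$, which is excluded for a genuine segment since $e(\Delta)<-\tfrac12$ (I need to check this edge case).
\item Each $u_i$ contributes at most one copy from the left, and only when $i\in J$. Each contributes at most one from the right, and only when $y_i=-z$, which means $u_i$ is the contragredient-type match; here again $I_2=\emptyset$ together with $u_i\not\cong\widetilde u$ should rule this out for $i\notin J$.
\item $\pi_0$ contributes nothing, thanks to the hypothesis $z\neq\pm\tfrac12$.
\end{itemize}
The total is therefore at most $2s$, and since the order $2s+1$ exceeds it, $D^{(2s+1)}_{\rho\abs^z}(\tau)=0$.

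Finally, (a) and (b) show that $\tau$ is not a subrepresentation of $\widetilde u\rtimes\pi$. If $\pi$ were unitary, \ref{U1} would make every irreducible subquotient of $\widetilde u\rtimes\pi$ a subrepresentation, including the Langlands subquotient $\tau$, which is a contradiction. I expect the main difficulty to be the counting in (b) for the right derivatives of the $u_i$: exactly which $u_i$ can contribute, and why each gives at most $2$ in total, must be matched precisely with the $2s$ from (a).
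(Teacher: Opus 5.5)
Your overall strategy is the paper's: an embedding plus Frobenius reciprocity for every subrepresentation $\sigma$, and the Leibniz rule plus Lemma~\ref{lem:consecutive der Speh} for $\tau$. Several ingredients are correct:
\begin{itemize}
\item $u_i^-\times\Delta$ is irreducible for $i\in J$ precisely because $I_2=\emptyset$;
\item $L_{\rho\abs^{z}}(\widetilde u_\Delta)\neq0=L^{(2)}_{\rho\abs^{z}}(\widetilde u_\Delta)$;
\item $\pi_0$ is $\rho\abs^z$-reduced when $z\neq\pm\half{1}$.
\end{itemize}
However, both parts contain a genuine gap.

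\textbf{Part (a).} The passage to a leftmost block $(\rho\abs^{z})^{2s+1}$ is not justified. Suppose $L^{(2s)}_{\rho\abs^z}$ of $\bigtimes_{i\in J}(u_i^-)^2$, or $L^{(2s+1)}_{\rho\abs^z}$ of $\widetilde u_\Delta\times\bigtimes_{i\in J}(u_i^-)^2$, is nonzero. The converse of Frobenius reciprocity turns this into an embedding into $(\rho\abs^z)^{2s}\times(\cdots)$ only if that product is irreducible. You only know pairwise irreducibility, and deducing irreducibility of the whole product is a deeper fact not among the tools used here. Naively commuting single copies of $\rho\abs^z$ also fails, since for instance $\rho\abs^{z}\times\seg{\rho}{z-1}{y_j}$ is reducible.

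Your alternative, the Leibniz rule applied to the Jacquet module of the full induced representation, shows only that the induced representation has a nonzero derivative. That says nothing about its subrepresentation $\sigma$.

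The paper avoids this as follows.
\begin{itemize}
\item Apply Proposition~\ref{prop:embed} to $\pi$, viewed as the Langlands subquotient of $\bigtimes_{i\in J}\Delta_i^2\times\Delta^2\times\bigtimes_{i\in J}\widetilde u_i\times\bigtimes_{i\notin J}u_i\rtimes\pi_0$. Apply it also to $\widetilde\pi=\bigtimes_{i\in J}\widetilde u_i\times\bigtimes_{i\notin J}u_i\rtimes\pi_0$.
\item This gives $\pi\hookrightarrow\bigtimes_{i\in J}\Delta_i^2\times\Delta^2\rtimes\widetilde\pi$. The only commutation needed is $\Delta_i\times\Delta\cong\Delta\times\Delta_i$, which holds because $\Delta\subseteq\Delta_i$.
\item The front block is a product of nested segments with common top $z$, so it is irreducible by Zelevinsky. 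Its highest derivative $\bigtimes_{i\in J}\seg{\rho}{x}{y_i}^2$ is irreducible and of order $2s$.
\item Finally, $\widetilde u_\Delta\times\rho\abs^z\cong\rho\abs^z\times\widetilde u_\Delta$, since the second entries of the quadruples coincide. So $\widetilde u_\Delta$ moves past $(\rho\abs^z)^{2s}$, and one more copy is peeled off its lowest segment.
\end{itemize}

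\textbf{Part (b).} You use the wrong criterion for right derivatives. Lemma~\ref{lem:consecutive der Speh} says that $R_{\rho\abs^{-z}}(u)\neq0$ forces $z=\alpha$, the top of the \emph{lowest} segment. Equivalently, $-z$ is the bottom of the highest segment, which for the self-dual $u_i$ is $\Delta_i^\vee=\seg{\rho}{-y_i}{-x_i}$. So the condition is $x_i=z$, the same as for left derivatives, and not $y_i=-z$.

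As written your count fails. The hypotheses $I_2=\emptyset$ and $u_i\not\cong\widetilde u_\Delta$ impose nothing on $y_i$ for $i\notin J$; for example, a factor with lowest segment $\seg{\rho}{x}{-x-1}$ is allowed. Moreover, under your criterion no $i\in J$ could contribute on the right, since $y_i\leq y<-z$, so you would not even arrive at $2s$.

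With the correct criterion the bound is immediate:
\begin{itemize}
\item $u_\Delta$ contributes nothing because $x\neq z$ (not because $y\neq -z$);
\item each $i\in J$ has $l_i\leq 1$ and $r_i\leq1$, since $L$ and $R$ commute;
\item all other factors contribute $0$.
\end{itemize}
Hence $d\leq 2s$. The hypotheses $I_2=\emptyset$ and $u_i\not\cong\widetilde u_\Delta$ play no role in (b).
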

\begin{proof}
 Note that since $e(\Delta)<-1/2$ and $x+y \in \half{1}\Z$, we have $\half{x+y} \leq -1$ and hence $y\leq -x-2$.  
Set 
\[ I^{=x+1}=\{i \in I \ | \ x_i=x+1, \text{and }\rho_i \cong \rho\}, \]
 and decompose $I= I^{=x+1} \sqcup I^{\neq x+1}.$ Suppose that $i \in I^{=x+1}$. Then since $I_2$ is empty, we have
\[ y_i \leq y \leq-x-2.\]
In particular, we observe that for $i \in I^{=x+1}$,
\begin{itemize}
    \item the generalized Speh representation $u_i$ is not tempered, so $u_i^-$ is non-trivial, and
    \item $\Delta_i \times \Delta= \Delta \times \Delta_i$, since $\Delta\subseteq\Delta_i$.
\end{itemize}

Set 
\[ \widetilde{\pi}:= \bigtimes_{i \in I^{=x+1}} \widetilde{u}_i \times \bigtimes_{i \in I^{\neq x+1}} u_i \rtimes \pi_{0}. \]
Applying Proposition \ref{prop:embed} to $\widetilde{\pi}$ and $\pi$ respectively, we see that
\begin{align*}
    &\left(\bigtimes_{i \in I^{=x+1}} (\Delta_i)^{2} \times \Delta^2 \right) \rtimes \widetilde{\pi} \\
    \hookrightarrow & \left(\bigtimes_{i \in I^{=x+1}} (\Delta_i)^{2} \times \Delta^2 \right) \times \bigtimes_{i \in I^{=x+1}} (\widetilde{u}_i^-)^2 \times \bigtimes_{i \in I^{\neq x+1}} (u_i^-)^2 \rtimes \pi_{0,temp}\\
    \hookrightarrow& M(\pi).
\end{align*}
As a consequence, we have an injection
\[ \pi \hookrightarrow \left(\bigtimes_{i \in I^{=x+1}} (\Delta_i)^{2} \times \Delta^2 \right)  \rtimes \widetilde{\pi}.\]
Now we focus on the $\GL$ part. By Frobenius reciprocity, Leibniz rule and \eqref{eq der St} for the highest derivative of the irreducible induction $\bigtimes_{i \in I^{=x+1}} (\Delta_i)^{2}$, we have
\begin{align*}
   \left(\bigtimes_{i \in I^{=x+1}} (\Delta_i)^{2} \times \Delta^2 \right)  
    & \hookrightarrow  (\rho\abs^{x+1})^{2s} \times \left (\bigtimes_{i \in I^{=x+1}} (\Delta_{\rho}[x_i-1, y_i])^{2} \times \Delta^2 \right)  \\
    &=  (\rho\abs^{x+1})^{2s}  \times \Pi,
\end{align*}
where $\Pi$ is the remaining product. As a consequence, if $\sigma$ embeds into $\widetilde{u}_\Delta \rtimes \pi$, then 
\[ \sigma \hookrightarrow \widetilde{u}_\Delta \rtimes \pi \hookrightarrow \widetilde{u}_\Delta  \times (\rho\abs^{x+1})^{2s}  \times \Pi \rtimes \widetilde{\pi}.  \]
Again focus on the $\GL$ part. Since $L_{\rho\abs^{x+1}}(\widetilde{u}_\Delta)$ is a nonzero highest derivative, we have 
\[ \widetilde{u}_\Delta  \times (\rho\abs^{x+1})^{2s}=(\rho\abs^{x+1})^{2s} \times  \widetilde{u}_\Delta  \hookrightarrow  (\rho\abs^{x+1})^{2s+1} \times  L_{\rho\abs^{x+1}}(\widetilde{u}_\Delta).\]
Now Part (a) follows from Frobenius reciprocity.

For Part (b), the Leibniz rule gives
\[ D_{\rho\abs^{x+1}}^{(d)}(\tau)= \sum c_{(\underline{l},\underline{r},t )}  L_{\rho\abs^{x+1}}^{(l_\Delta)} \circ R_{\rho\abs^{-x-1}}^{(r_\Delta)} (u_{\Delta}) \times  \bigtimes_{i \in I} L_{\rho\abs^{x+1}}^{(l_i)} \circ R_{\rho\abs^{-x-1}}^{(r_i)} (u_i) \rtimes D_{\rho\abs^{x+1}}^{(t)}(\pi_0), \]
where the sum is taken over all tuples of non-negative integers \[(\underline{l}=(l_{\Delta}, \{l_i\}_{i \in I}),\ \underline{r}= (r_{\Delta}, \{r_i\}_{i \in I}),\ t )\] such that
\[ l_{\Delta}+r_{\Delta}+ \sum_{i} l_i+r_i +t=d,  \]
and each $c_{(\underline{l},\underline{r},t )}$ is a positive constant. However, Lemma \ref{lem:consecutive der Speh} implies that $u_{\Delta}$ is left $\rho\abs^{x+1}$-reduced and right $\rho\abs^{-x-1}$-reduced, and so is $u_i$ unless $\rho_i \cong \rho$ and $x_i=x+1$. Moreover, if  $\rho_i \cong \rho$ and $x_i=x+1$, then $L_{\rho\abs^{x+1}}^{(l_i)} \circ R_{\rho\abs^{-x-1}}^{(r_i)} (u_i) = 0$ unless $l_i \leq 1$ and $r_i \leq 1$. Finally, the representation $\pi_0$ is $\rho\abs^{x+1}$-reduced thanks to the assumption that $|x+1| \neq 1/2$. We conclude that if $D_{\rho\abs^{x+1}}^{(d)}(\tau)\neq 0$, then
\[ d \leq 2 |\{i \in I \ | \ \rho_i \cong \rho,\ x_i=x+1 \}|= 2s.\]
This completes the proof of Part (b). 
\end{proof}

If $|x+1|=1/2$, $\rho\cong\one_{\GL_1}$ and $\pi_0$ is non-trivial, the last step of the above proof shows that
\begin{align}\label{eq I2 nonempty (ii) tau}
    D_{\rho\abs^{x+1}}^{(2s+1)}(\tau)= u_{\Delta} \times \bigtimes_{i \in I^{\neq x+1}} u_i \times \bigtimes_{i \in I^{=x+1}} L_{\rho\abs^{x+1}} \circ R_{\rho\abs^{-x-1}}(u_i) \rtimes D_{\rho\abs^{x+1}}(\pi_0) 
\end{align}
is the highest derivative. In these cases, we further take derivatives at $x$, or at $x+2$ when $\Delta=\abs^{-3/2}$. We first consider the case that $\Delta \neq \abs^{-3/2}$.

\begin{prop}\label{prop:empty-half}
Suppose that $\pi$ is as in Theorem~\ref{thm:reduced} with $I_2=\emptyset$, that $G=\Sp_{2n}(F)$, $\pi_0=\abs^{1/2}\rtimes\one_{\Sp_0}$, $\rho\cong\one_{\GL_1}$ and $x+1\in\{\pm\frac12\}$, and that $\Delta\neq\abs^{-3/2}$. Let $s$ be as in Proposition~\ref{prop:empty} and
\[
t:=|\{i\in I\mid\rho_i\cong\rho,\ x_i=x+1,\ y_i\leq x\}|+|\{i\in I\mid\rho_i\cong\rho,\ x_i=x\}|+1 .
\]
\begin{enumerate}
 \item [(a)]If $\sigma\hookrightarrow \widetilde{u}_\Delta\rtimes\pi$, then $D^{(2t+1)}_{\rho\abs^{x}}\circ D^{(2s+1)}_{\rho\abs^{x+1}}(\sigma)\neq0$. 
    \item [(b)] Let $\tau$ be defined in \eqref{eq tau}. Then $D^{(2t+1)}_{\rho\abs^{x}}\circ D^{(2s+1)}_{\rho\abs^{x+1}}(\tau)=0$.
\end{enumerate}
In particular, $\pi$ is not unitary.
\end{prop}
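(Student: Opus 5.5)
We follow the template of Proposition~\ref{prop:empty}. First we take the $2s+1$ derivatives at $\rho\abs^{x+1}$. Then we take $2t+1$ further derivatives at $\rho\abs^{x}$. The count $2t+1$ is chosen so that every subrepresentation of $\widetilde{u}_\Delta\rtimes\pi$ can still supply the derivatives, while $\tau$ can supply at most $2t$ of them.

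For Part (a), we refine the embedding used in the proof of Proposition~\ref{prop:empty}. Let $I^{=x}:=\{i\in I\mid \rho_i\cong\rho,\ x_i=x\}$.
\begin{itemize}
\item We replace $\widetilde{\pi}$ by the representation in which $u_i$ is replaced by $\widetilde{u}_i$ for all $i\in I^{=x+1}\sqcup I^{=x}$. Note $I^{=x}\neq\emptyset$ only through segments not meeting $I_2$.
\item We apply Proposition~\ref{prop:embed} to both representations. This gives
\[
\pi\hookrightarrow \Big(\bigtimes_{i\in I^{=x+1}}\Delta_i^2\times\bigtimes_{i\in I^{=x}}\Delta_i^2\times\Delta^2\Big)\rtimes\widetilde{\pi}.
\]
\item As before, we pull out $(\rho\abs^{x+1})^{2s}$ from the $\Delta_i^2$ with $i\in I^{=x+1}$, and one more copy of $\rho\abs^{x+1}$ from $\widetilde{u}_\Delta$. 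This yields
\[
\sigma\hookrightarrow(\rho\abs^{x+1})^{2s+1}\times L_{\rho\abs^{x+1}}(\widetilde{u}_\Delta)\times\Pi\rtimes\widetilde{\pi}.
\]
\item We count the $\GL$ factors beginning at $\rho\abs^{x}$:
  \begin{itemize}
  \item $L_{\rho\abs^{x+1}}(\widetilde{u}_\Delta)$ admits a further left derivative at $\rho\abs^{x}$, by Lemma~\ref{lem:consecutive der Speh}(b), since the lowest segment of $\widetilde{u}_\Delta$ is $\Delta\abs^1$, which has length at least $2$ because $\Delta\neq\abs^{-3/2}$;
  \item $\Delta^2$ contributes $2$;
  \item the segments $\Delta_\rho[x,y_i]^2$ with $i\in I^{=x+1}$ and $y_i\le x$ contribute $2$ each;
  \item the segments $\Delta_i^2$ with $i\in I^{=x}$ contribute $2$ each.
  \end{itemize}
  In total this gives $2t+1$ factors $\rho\abs^{x}$.
\item To pull all of these to the front, we use that segments beginning at the same point commute. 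We also use Lemma~\ref{lem:Speh-irr} and Theorem~\ref{thm:Tadic} to commute $L_{\rho\abs^{x+1}}(\widetilde{u}_\Delta)$ past the relevant segments, or more simply apply the $\GL$ Leibniz rule to the highest left derivative of the product. We then conclude with Frobenius reciprocity.
\end{itemize}

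For Part (b), we start from the explicit formula \eqref{eq I2 nonempty (ii) tau} for $D^{(2s+1)}_{\rho\abs^{x+1}}(\tau)$. We apply the Leibniz rule \eqref{eq:Leibniz} at $\rho\abs^{x}$ and bound the contribution of each factor.
\begin{itemize}
\item $u_\Delta$ contributes at most one left derivative at $\rho\abs^{x}$ and one right derivative at $\rho\abs^{-x}$, by Lemma~\ref{lem:consecutive der Speh}, since $x(\Delta)=x$.
\item Each $u_i$ with $i\in I^{=x}$ contributes at most $2$.
\item Each $u_i$ with $i\in I^{\neq x+1}\setminus I^{=x}$ contributes $0$.
\item Each $L_{\rho\abs^{x+1}}\circ R_{\rho\abs^{-x-1}}(u_i)$ with $i\in I^{=x+1}$ contributes at most $2$ if $y_i\le x$, and $0$ otherwise. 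To prove this, embed $u_i\hookrightarrow\Delta_i\times u'\times\Delta_i^\vee$ via Lemma~\ref{lem:socle}. By the Leibniz rule, the derivatives at $x+1$ and $-x-1$ must be absorbed by the ends of $\Delta_i$ and $\Delta_i^\vee$. Then the only possible further contributions at $\rho\abs^{\pm x}$ are one from each of $\Delta_\rho[x,y_i]$ and its dual. This requires $y_i\le x$, using Lemma~\ref{lem:consecutive der Speh} for $u'$.
\item $D_{\rho\abs^{x+1}}(\pi_0)=\one_{\Sp_0}$ contributes $0$.
\end{itemize}
Hence the total is at most $2+2(t-1)=2t<2t+1$, so $D^{(2t+1)}_{\rho\abs^{x}}\circ D^{(2s+1)}_{\rho\abs^{x+1}}(\tau)=0$.

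Nonunitarity then follows as before. By Part (a), every subrepresentation of $\widetilde{u}_\Delta\rtimes\pi$ has a nonzero value under this composition of derivatives, so by Part (b) $\tau$ is not a subrepresentation. By Proposition~\ref{prop:A prime unitary}, $\tau$ is unitary and satisfies condition $(\mathrm{A}')$. Since $\tau$ is a subquotient of $\widetilde{u}_\Delta\rtimes\pi$ but not a subrepresentation, \ref{U1} shows that $\pi$ cannot be unitary.

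We expect the main obstacle to be the bookkeeping in Part (a). There we must justify moving all $2t+1$ copies of $\rho\abs^{x}$ to the front, through $L_{\rho\abs^{x+1}}(\widetilde{u}_\Delta)$ and the shortened segments $\Delta_\rho[x,y_i]$. We must also confirm that Proposition~\ref{prop:embed} applies to the modified $\widetilde{\pi}$ when $I^{=x}$ is nonempty. A secondary difficulty in Part (b) is the careful Leibniz analysis of $L\circ R(u_i)$, which is not a Speh representation.
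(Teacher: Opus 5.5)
Your plan is the paper's: the same composite $D^{(2t+1)}_{\rho\abs^{x}}\circ D^{(2s+1)}_{\rho\abs^{x+1}}$, and your Part (b) is the paper's count, correctly justified. Note that here $y_i\le y\le x-1$ for $i\in I^{=x+1}$, so the condition $y_i\le x$ is automatic. The gap is in Part (a), at exactly the step you flagged: the tools you name there do not carry it out.

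First, the ordering of segments. Proposition~\ref{prop:embed} applied to $\pi$ produces the segments only in non-decreasing order of exponents. Suppose $i\in I^{=x}$ and $j\in I^{=x+1}$ satisfy $y_i<y_j$; nothing prevents this, since only $y_j\le y$ is forced. Then $\Delta_i=\seg{\rho}{x}{y_i}$ and $\Delta_j=\seg{\rho}{x+1}{y_j}$ are linked with $e(\Delta_i)<e(\Delta_j)$, so $\Delta_i$ must precede $\Delta_j$. In that order you cannot bring $(\rho\abs^{x+1})^{2s}$ to the front, because $\rho\abs^{x+1}\times\seg{\rho}{x}{y_i}$ is reducible.

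Your ordering (with $I^{=x+1}$ first) is nevertheless true, but it needs two steps:
\begin{enumerate}
\item Take the embedding $\pi\hookrightarrow\bigtimes_{j\in I^{=x+1}}\Delta_j^2\times\Delta^2\rtimes\widetilde{\pi}$ of Proposition~\ref{prop:empty}.
\item Apply Proposition~\ref{prop:embed} to $\widetilde{\pi}$ itself to get $\widetilde{\pi}\hookrightarrow\bigtimes_{i\in I^{=x}}\Delta_i^2\rtimes\widetilde{\pi}_{\mathrm{new}}$, then commute $\Delta^2$ past the $\Delta_i^2$ (same top $x$).
\end{enumerate}
The paper avoids this by keeping $u_i$, $i\in I^{=x}$, inside $\widetilde{\pi}$. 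It uses $\widetilde{\pi}\hookrightarrow(\rho\abs^{x})^{2t_2}\rtimes\widetilde{\pi}'$, which comes from the Leibniz rule for the irreducible induction $\widetilde{\pi}$ plus Frobenius reciprocity. These copies of $\rho\abs^x$ then commute past $\seg{\rho}{x}{y_j}$ and $\Delta$, which contain $\rho\abs^x$.

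Second, the treatment of $\widetilde{u}_\Delta$. After you split $\rho\abs^{x+1}$ off $\widetilde{u}_\Delta$, you are left with the ladder $L_{\rho\abs^{x+1}}(\widetilde{u}_\Delta)$. This is not an essentially generalized Speh representation, so Theorem~\ref{thm:Tadic} and Lemma~\ref{lem:Speh-irr} say nothing about commuting it.

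The Leibniz rule does not rescue this either. Set $X:=L_{\rho\abs^{x+1}}(\widetilde{u}_\Delta)\times\Pi$. From $\sigma\hookrightarrow(\rho\abs^{x+1})^{2s+1}\times X\rtimes\widetilde{\pi}_{\mathrm{new}}$ you only learn that $D^{(2s+1)}_{\rho\abs^{x+1}}(\sigma)$ contains some nonzero subrepresentation of $X\rtimes\widetilde{\pi}_{\mathrm{new}}$. So you need an \emph{embedding} into $(\rho\abs^{x})^{2t+1}\times\cdots$, not merely non-vanishing of a derivative of the possibly reducible $X$. Likewise, Lemma~\ref{lem:consecutive der Speh} gives only necessary conditions, so it cannot be cited for $L_{\rho\abs^{x}}\circ L_{\rho\abs^{x+1}}(\widetilde{u}_\Delta)\neq0$.

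The remedy is to leave $\widetilde{u}_\Delta$ intact until the end:
\begin{enumerate}
\item Write $\Pi\hookrightarrow(\rho\abs^{x})^{2t}\times\Pi'$, which holds because all segments of $\Pi$ start at $x$.
\item Commute $\widetilde{u}_\Delta$ to the right of $(\rho\abs^{x+1})^{2s}\times(\rho\abs^{x})^{2t}$ by Theorem~\ref{thm:Tadic}; its lowest segment $\seg{\rho}{x+1}{y+1}$ contains both points.
\item Peel $\rho\abs^{x+1}$ and then $\rho\abs^{x}$ off $\seg{\rho}{x+1}{y+1}$ as in \eqref{eq: strip off}, commuting $\seg{\rho}{x+1}{y+1}$ and $\seg{\rho}{x}{y+1}$ past $(\rho\abs^{x})^{2t}$.
\end{enumerate}
This is exactly where $y\le x-1$, i.e.\ $\Delta\neq\abs^{-3/2}$, is used.
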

\begin{proof}
    We continue the computation in Proposition \ref{prop:empty}. For Part (a), recall that we have 
    \begin{align*}
        \pi &\hookrightarrow (\rho\abs^{x+1})^{2s} \times \left (\bigtimes_{i \in I^{=x+1}} (\Delta_{\rho}[x, y_i])^{2} \times \Delta_{\rho}[x,y]^2 \right) \rtimes \widetilde{\pi},
    \end{align*}
where 
\[ \widetilde{\pi}:= \bigtimes_{i \in I^{=x+1}} \widetilde{u}_i \times \bigtimes_{i \in I^{\neq x+1}} u_i \rtimes \pi_{0}. \]
The product $\bigtimes_{i \in I^{=x+1}} (\Delta_{\rho}[x, y_i])^{2} \times \Delta_{\rho}[x,y]^2$ already contributes 
    \[2t_1+2:=2 |\{i\in I\mid\rho_i\cong\rho,\ x_i=x+1,\ y_i\leq x\}| +2\]
    to the order of the highest derivatives at $\rho\abs^{x}$. We will get the rest from $\widetilde{\pi}$ and $\widetilde{u}_\Delta$. 
    
     Since $\widetilde{\pi}$ is an irreducible induction, we may compute the order of the highest derivative of $\widetilde{\pi}$ with respect to $\rho\abs^{1/2}$ by the Leibniz rule and Lemma \ref{lem:consecutive der Speh}, similar to the proof of Part (b) of Proposition \ref{prop:empty}. We have:
    \begin{itemize}
        \item each term $\widetilde{u}_i$ in the product $\bigtimes_{i \in I^{=x+1}} \widetilde{u}_i $ does not contribute, since the only possible non-zero first left and right derivatives of $\widetilde{u}_i$ are $L_{\rho\abs^{x+2}}$ and $R_{\rho\abs^{-x-2}}$;
    \item each term $u_i$ in the product $\bigtimes_{i \in I^{\neq x+1}} u_i $ will contribute $2$ to the order of the highest derivative if $x_i=x$;
        \item the term $\pi_0$ will contribute $1$ to the order of the highest derivatives if $x=-1/2$, and zero otherwise.
    \end{itemize}
    Therefore, $D_{\rho\abs^{x}}^{(2t_2)} (\widetilde{\pi}) \neq 0$, where 
    \[ t_2=|\{i\in I\mid\rho_i\cong\rho,\ x_i=x\}|= t-t_1-1.\]
   As a consequence, there exists an irreducible representation $\widetilde{\pi}'$ such that 
   \[ \widetilde{\pi} \hookrightarrow (\rho\abs^{x})^{2 t_2} \rtimes \widetilde{\pi}',\]
   and
   \begin{align*}
       \pi &\hookrightarrow  (\rho\abs^{x+1})^{2s} \times \left (\bigtimes_{i \in I^{=x+1}} (\Delta_{\rho}[x, y_i])^{2} \times \Delta_{\rho}[x,y]^2 \right) \times (\rho\abs^{x})^{2t_2} \rtimes \widetilde{\pi}'\\
       &= (\rho\abs^{x+1})^{2s} \times (\rho\abs^{x})^{2 t_2} \times \left (\bigtimes_{i \in I^{=x+1}} (\Delta_{\rho}[x, y_i])^{2} \times \Delta_{\rho}[x,y]^2 \right)  \rtimes \widetilde{\pi}'\\
       &\hookrightarrow (\rho\abs^{x+1})^{2s} \times (\rho\abs^{x})^{2t_2 + 2t_1 +2} \times \left (\bigtimes_{i \in I^{=x+1}} (\Delta_{\rho}[x-1, y_i])^{2} \times \Delta_{\rho}[x-1,y]^2 \right)  \rtimes \widetilde{\pi}'\\
       &= (\rho\abs^{x+1})^{2s} \times (\rho\abs^{x})^{2t}  \rtimes \Pi,
   \end{align*}
where $\Pi$ is the remaining part. Thus, if $\sigma \hookrightarrow \widetilde{u}_\Delta \rtimes \pi$, then
\begin{align} \label{eq I2 empty (ii)}
   \sigma &\hookrightarrow  \widetilde{u}_\Delta \times (\rho\abs^{x+1})^{2s} \times (\rho\abs^{x})^{2t}  \rtimes \Pi.
\end{align}

Finally, observe that if $x+1=\half{1}$, then $y\leq x -1$ by our requirement that $e(\Delta)=\half{x+y} <-\half{1}$. If $x+1=-\half{1}$, then our assumption that $\Delta\neq \abs^{-3/2}$ again implies that $y\leq x-1 $. Therefore, in any case, the lowest segment $\Delta_{\rho}[x+1,y+1]$ commutes with $\rho\abs^{x}$ and $\rho\abs^{x+1}$. Let $u'$ be the essentially generalized Speh representation such that 
\[ \widetilde{u}_\Delta \hookrightarrow \Delta_{\rho}[x+1,y+1] \times u'. \]
Then  
\begin{equation}\label{eq: strip off}
    \begin{aligned}
     \widetilde{u}_\Delta \times (\rho\abs^{x+1})^{2s} \times (\rho\abs^{x})^{2t} &=  (\rho\abs^{x+1})^{2s} \times (\rho\abs^{x})^{2t} \times  \widetilde{u}_\Delta\\
    &\hookrightarrow (\rho\abs^{x+1})^{2s} \times (\rho\abs^{x})^{2t} \times \Delta_{\rho}[x+1,y+1] \times u'\\
    &= (\rho\abs^{x+1})^{2s} \times \Delta_{\rho}[x+1,y+1] \times (\rho\abs^{x})^{2t}  \times u'\\
    &\hookrightarrow (\rho\abs^{x+1})^{2s} \times \rho\abs^{x+1} \times \Delta_{\rho}[x,y+1] \times (\rho\abs^{x})^{2t}  \times u'\\
    &=(\rho\abs^{x+1})^{2s+1} \times (\rho\abs^{x})^{2t} \times \Delta_{\rho}[x,y+1]   \times u'\\
    &\hookrightarrow (\rho\abs^{x+1})^{2s+1} \times (\rho\abs^{x})^{2t+1} \times \Delta_{\rho}[x-1,y+1]   \times u'.
\end{aligned}
\end{equation}
Now combining this embedding and \eqref{eq I2 empty (ii)}, Part (a) follows from Frobenius reciprocity.

For Part (b), recall from \eqref{eq I2 nonempty (ii) tau} that
\[ D^{(2s+1)}_{\rho\abs^{x+1}}(\tau)= u_{\Delta} \times \bigtimes_{i \in I^{\neq x+1}} u_i \times \bigtimes_{i \in I^{=x+1}} L_{\rho\abs^{x+1}} \circ R_{\rho\abs^{-x-1}}(u_i) \rtimes \pmb{1}_{\Sp_{0}}.\]
Now we compute the order of the highest derivatives of the above product with respect to $\rho\abs^x$:
\begin{itemize}
    \item the term $u_{\Delta}$ will contribute $2$ to the order of the highest derivatives;
    \item each term $u_i$ in the product $\bigtimes_{i \in I^{\neq x+1}} u_i $ will contribute $2$ to the order of the highest derivatives if $x_i=x$;
    \item each term $L_{\rho\abs^{x+1}} \circ R_{\rho\abs^{-x-1}}(u_i)$ in the product $\bigtimes_{i \in I^{=x+1}} L_{\rho\abs^{x+1}} \circ R_{\rho\abs^{-x-1}}(u_i)$ will contribute $2$ to the order of the highest derivative if $y_i \leq x$.
\end{itemize}
We conclude that the order of the highest derivatives of $D^{(2s+1)}_{\rho\abs^{x+1}}(\tau)$ with respect to $\rho\abs^x$ is exactly
\[2+2|\{i\in I\mid\rho_i\cong\rho,\ x_i=x+1,\ y_i\leq x\}|+2|\{i\in I\mid\rho_i\cong\rho,\ x_i=x\}| =2t. \]
This completes the proof of Part (b).
\end{proof}

Finally, we deal with the remaining case: $G=\Sp_{2n}(F)$, $\rho\cong\one_{\GL_1}$, $\pi_0=\abs^{1/2}\rtimes\one_{\Sp_0}$, $\Delta=\abs^{-3/2}$ and $I_2=\emptyset$. In this case, we still first  take the derivatives $D^{(2s+1)}_{\abs^{x+1}}=D^{(2s+1)}_{\abs^{-1/2}}$, but then we take derivatives with respect to $\rho \abs^{1/2}= \rho\abs^{x+2}$.
\begin{prop}\label{prop:empty-32}
Suppose that $\pi$ is as in Theorem~\ref{thm:reduced} with $I_2=\emptyset$, that $G=\Sp_{2n}(F)$, $\pi_0=\abs^{1/2}\rtimes\one_{\Sp_0}$, $\rho\cong\one_{\GL_1}$, and that $\Delta=\abs^{-3/2}$. Let $s$ be as in Proposition~\ref{prop:empty} and
\[t:=|\{i\in I\mid\rho_i\cong\rho,\ x_i=-\tfrac12\}|+|\{i\in I\mid\rho_i\cong\rho,\ x_i=\tfrac12\}|.\]
\begin{enumerate}
 \item [(a)]If $\sigma\hookrightarrow \widetilde{u}_\Delta\rtimes\pi$, then $D^{(2t+2)}_{\rho\abs^{1/2}}\circ D^{(2s+1)}_{\rho\abs^{-1/2}}(\sigma)\neq0$. 
    \item [(b)] Let $\tau$ be defined in \eqref{eq tau}. Then $D^{(2t+2)}_{\rho\abs^{1/2}}\circ D^{(2s+1)}_{\rho\abs^{-1/2}}(\tau)=0$.
\end{enumerate}
In particular, $\pi$ is not unitary.
\end{prop}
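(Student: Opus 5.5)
We follow the pattern of Propositions~\ref{prop:empty} and \ref{prop:empty-half}. Here $\Delta=\abs^{-3/2}$, so $x=y=-\frac32$, $x+1=-\frac12$, $x+2=\frac12$, and $\widetilde{u}_\Delta=u_{\one}(1,2)=L(\abs^{-1/2},\abs^{1/2})$, whose lowest segment is $\abs^{-1/2}$. The derivative $D^{(2s+1)}_{\abs^{-1/2}}$ strips $\abs^{-1/2}$ from $\widetilde{u}_\Delta$ together with the $2s$ copies coming from the $\Delta_i$, $i\in I^{=x+1}$. This leaves $\abs^{1/2}$ from $\widetilde{u}_\Delta$ exposed, which should produce one more $\abs^{1/2}$-derivative than $\tau$ can afford.

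\emph{Part (a).} As in Proposition~\ref{prop:empty}, Proposition~\ref{prop:embed} gives
\[
\pi\hookrightarrow(\abs^{-1/2})^{2s}\times\Big(\bigtimes_{i\in I^{=x+1}}\seg{\one}{-\frac32}{y_i}^2\times(\abs^{-3/2})^2\Big)\rtimes\widetilde\pi .
\]
Here $I_2=\emptyset$ forces $y_i\le -\frac32$, so the remaining segments have $x$-end $-\frac32$. They are therefore $\abs^{1/2}$-reduced on both sides and commute past $\abs^{1/2}$.

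Next we find $\abs^{1/2}$-derivatives of $\widetilde\pi=\bigtimes_{I^{=x+1}}\widetilde u_i\times\bigtimes_{I^{\neq x+1}}u_i\rtimes\pi_0$, an irreducible induction, using the Leibniz rule and Lemma~\ref{lem:consecutive der Speh}:
\begin{itemize}
\item each $u_i$ with $x_i=\frac12$ contributes $2$ (one left, one right via $\abs^{-1/2}$);
\item each $\widetilde u_i$ with $i\in I^{=x+1}$ has lowest segment starting at $x_i+1=\frac12$ and contributes $2$;
\item $\pi_0=\abs^{1/2}\rtimes\one$ contributes $1$.
\end{itemize}
One must also account for the $u_i$ with $x_i=-\frac12$ that enter $t$. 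After taking $D_{\abs^{-1/2}}$ they expose $\frac12$ only in $\tau$-type computations, so I would recheck whether the count for $\widetilde\pi$ in fact contains the $I^{=x+1}$ contributions, which are exactly the indices with $x_i=-\frac12$. This gives $\widetilde\pi\hookrightarrow(\abs^{1/2})^{2t+1}\rtimes\widetilde\pi'$.

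Then $\widetilde u_\Delta\times(\abs^{-1/2})^{2s}\times(\abs^{1/2})^{2t+1}$ embeds, using $\widetilde u_\Delta\hookrightarrow\abs^{-1/2}\times\abs^{1/2}$ and commuting $\abs^{1/2}$ past $\abs^{-1/2}$ (their product is irreducible only after care; instead write $\widetilde u_\Delta\hookrightarrow \abs^{-1/2}\times\abs^{1/2}$ and move the $\abs^{-1/2}$'s to the front, legitimate since $(\abs^{-1/2})^{2s}$ commutes with $\widetilde u_\Delta$ by Lemma~\ref{lem:Speh-irr}). The result is
\[
(\abs^{-1/2})^{2s+1}\times(\abs^{1/2})^{2t+2}\times\cdots,
\]
and Frobenius reciprocity yields (a).

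\emph{Part (b).} Use \eqref{eq I2 nonempty (ii) tau}: $D^{(2s+1)}_{\abs^{-1/2}}(\tau)$ equals $u_\Delta\times\bigtimes_{I^{\neq x+1}}u_i\times\bigtimes_{I^{=x+1}}L\circ R(u_i)\rtimes\one_{\Sp_0}$. Compute its highest $\abs^{1/2}$-derivative by the Leibniz rule and Lemma~\ref{lem:consecutive der Speh}:
\begin{itemize}
\item $u_\Delta=u_{\one}(1,3)$ has lowest segment $\abs^{-1}$, so it contributes $0$;
\item $u_i$ with $x_i=\frac12$ contributes $2$;
\item $L\circ R(u_i)$ for $x_i=-\frac12$ has new lowest segment beginning at $\frac12$ (either $\seg{\one}{\frac12}{\cdot}$ after stripping, or via the next segment of $u_i$), contributing $2$;
\item $\one_{\Sp_0}$ contributes $0$.
\end{itemize}
The maximum is $2t<2t+2$, giving (b).

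Non-unitarity then follows as before from \ref{U1}, since $\tau$ would otherwise be a subrepresentation.

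The main obstacle is the precise derivative bookkeeping for $L_{\abs^{-1/2}}\circ R_{\abs^{1/2}}(u_i)$ with $x_i=-\frac12$. We need to show it has $\abs^{1/2}$-derivative of order at most $2$, and also that the corresponding count in (a) really reaches $2t+2$; the extra $+1$ over (b) comes from $\pi_0$ together with the freed $\abs^{1/2}$ of $\widetilde u_\Delta$. I would first verify these on $u_{\one}(1,2)$ and $u_{\one}(2,3)$ examples using the ladder Jacquet module formula.
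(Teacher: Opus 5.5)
Your route is the paper's. You use the same derivative $D^{(2t+2)}_{\abs^{1/2}}\circ D^{(2s+1)}_{\abs^{-1/2}}$. In (a) you use the same embedding of $\pi$ from Proposition~\ref{prop:embed}, the count $2t+1$ for $\widetilde\pi$, and the same trick of commuting $(\abs^{-1/2})^{2s}$ past $\widetilde u_\Delta=u_{\one}(1,2)$ before splitting $\widetilde u_\Delta\hookrightarrow\abs^{-1/2}\times\abs^{1/2}$. In (b) you use the same Leibniz count on \eqref{eq I2 nonempty (ii) tau}. Part (a) goes through as you describe. Your count there needs $\widetilde u_i$ to be nontrivial for $i\in I^{=x+1}$. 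This follows from your own observation $y_i\le-\frac32$, which gives $a_i\ge2$ and hence $b_i=a_i+1\ge3$; the paper instead derives it from $u_i\not\cong\widetilde u_\Delta$. Also, $u_\Delta=u_{\one}(1,4)$, not $u_{\one}(1,3)$. Its lowest segment is $\abs^{-3/2}$, so it still contributes nothing at $\abs^{1/2}$.

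The genuine gap is in (b), at the step you flag yourself. You need that $L_{\abs^{-1/2}}\circ R_{\abs^{1/2}}(u_i)$, for $i\in I^{=x+1}$, admits at most one left $\abs^{1/2}$-derivative and at most one right $\abs^{-1/2}$-derivative. Your heuristic for this is inaccurate. After stripping, the lowest segment is $\seg{\one}{-\frac32}{y_i}$, which is nonempty and does not start at $\frac12$. The resulting representation is also no longer a generalized Speh representation. So Lemma~\ref{lem:consecutive der Speh} does not apply to it, and it does not apply to $u_i$ either, because the exponents $-\frac12,\frac12$ increase.

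The paper closes this with the embedding $u_i\hookrightarrow\seg{\one}{-\frac12}{y_i}\times u_{\one}(a_i,b_i-2)\times\seg{\one}{-y_i}{\frac12}$ from Lemma~\ref{lem:socle}. By the Leibniz rule, $L_{\abs^{-1/2}}\circ R_{\abs^{1/2}}(u_i)$ is bounded in the Grothendieck group by $\seg{\one}{-\frac32}{y_i}\times u_{\one}(a_i,b_i-2)\times\seg{\one}{-y_i}{\frac32}$. Since $y_i\le-\frac32$, the two outer factors are left $\abs^{1/2}$-reduced and right $\abs^{-1/2}$-reduced. The middle factor has lowest segment starting at $\frac12$, so Lemma~\ref{lem:consecutive der Speh} allows it at most one derivative of each kind. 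With this bound your count $2t<2t+2$ is justified and (b) follows; the ladder Jacquet module formula you mention would also give it.
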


\begin{proof}
   We continue the computation in Proposition \ref{prop:empty}. For Part (a), we have an injection
    \begin{align*}
        \pi \hookrightarrow (\rho\abs^{-1/2})^{2s} \times \left (\bigtimes_{i \in I^{=-1/2}} (\Delta_{\rho}[-3/2, y_i])^{2} \times (\abs^{-3/2})^2 \right) \rtimes \widetilde{\pi},
    \end{align*}
    where
    \[\widetilde{\pi}= \bigtimes_{i \in I^{=-1/2}} \widetilde{u}_i \times \bigtimes_{i \in I^{\neq -1/2}} u_i \rtimes \pi_{0}.\]
    Again, we compute the order of the highest derivative of $\widetilde{\pi}$ with respect to $\rho\abs^{1/2}$ by the Leibniz rule and Lemma \ref{lem:consecutive der Speh}. The key observation is that if $i \in I^{=-1/2}$, then $\widetilde{u}_i$ is non-trivial (i.e. $b_i\geq 3$), since otherwise $u_i= \widetilde{u}_\Delta,$ which contradicts the condition in Definition \ref{def:reduced}. Therefore, we have
\begin{itemize}
    \item each term $\widetilde{u}_i$ in the product $\bigtimes_{i \in I^{=-1/2}} \widetilde{u}_i $ will contribute $2$ to the order of the highest derivative;
    \item each term $u_i$ in the product $\bigtimes_{i \in I^{\neq -1/2}} u_i $ will contribute $2$ to the order of the highest derivative if $x_i=1/2$;
        \item the term $\pi_0$ will contribute $1$ to the order of the highest derivatives.
\end{itemize}
We conclude that $D_{\rho\abs^{1/2}}^{(2t+1)}(\widetilde{\pi})$ is the highest derivatives, and 
\begin{align*}
    \pi &\hookrightarrow  (\rho\abs^{-1/2})^{2s} \times \left (\bigtimes_{i \in I^{=-1/2}} (\Delta_{\rho}[-3/2, y_i])^{2} \times (\abs^{-3/2})^2 \right) \rtimes \widetilde{\pi}\\
    & \hookrightarrow (\rho\abs^{-1/2})^{2s} \times \left (\bigtimes_{i \in I^{=-1/2}} (\Delta_{\rho}[-3/2, y_i])^{2} \times (\abs^{-3/2})^2 \right) \times (\rho\abs^{1/2})^{2t+1}  \rtimes D_{\rho\abs^{1/2}}^{(2t+1)}(\widetilde{\pi})\\
    &= (\rho\abs^{-1/2})^{2s} \times (\rho\abs^{1/2})^{2t+1} \times \left (\bigtimes_{i \in I^{=-1/2}} (\Delta_{\rho}[-3/2, y_i])^{2} \times (\abs^{-3/2})^2 \right)   \rtimes D_{\rho\abs^{1/2}}^{(2t+1)}(\widetilde{\pi})\\
    &= (\rho\abs^{-1/2})^{2s} \times (\rho\abs^{1/2})^{2t+1} \rtimes \Pi,
\end{align*}
where $\Pi$ is the remaining part. Now if $\sigma\hookrightarrow \widetilde{u}_\Delta\rtimes\pi$, we have 
\begin{align*}
    \sigma &\hookrightarrow u_{\rho}(1,2) \times (\rho\abs^{-1/2})^{2s} \times (\rho\abs^{1/2})^{2t+1} \rtimes \Pi\\
    &= (\rho\abs^{-1/2})^{2s} \times u_{\rho}(1,2) \times (\rho\abs^{1/2})^{2t+1} \rtimes \Pi\\
    & \hookrightarrow (\rho\abs^{-1/2})^{2s+1}  \times (\rho\abs^{1/2})^{2t+2} \rtimes \Pi.
\end{align*}
Thus, Part (a) follows from Frobenius reciprocity.

For Part (b), recall from \eqref{eq I2 nonempty (ii) tau} that
\[ D^{(2s+1)}_{\rho\abs^{-1/2}}(\tau)= u_{\Delta} \times \bigtimes_{i \in I^{\neq -1/2}} u_i \times \bigtimes_{i \in I^{=-1/2}} L_{\rho\abs^{-1/2}} \circ R_{\rho\abs^{1/2}}(u_i) \rtimes \pmb{1}_{\Sp_{0}}.\]
For $i \in I^{=-1/2}$, since $b_i\geq 3$ as observed above, we have 
\[ u_i':= L_{\rho\abs^{1/2}} \circ R_{\rho\abs^{-1/2}}( u_{\rho}(a_i,b_i-2)) \neq 0.\] 
Then,
\begin{align*}
    u_i &\hookrightarrow \Delta_{\rho}[-1/2,y_i] \times u_{\rho}(a_i, b_i-2) \times \Delta_{\rho}[-y_i,1/2]\\
    & \hookrightarrow \rho\abs^{-1/2} \times \Delta_{\rho}[-3/2,y_i] \times \rho\abs^{1/2}\times u_i' \times \rho\abs^{-1/2}\times \Delta_{\rho}[-y_i,3/2] \times \rho\abs^{1/2}\\
    &= \rho\abs^{-1/2} \times \rho\abs^{1/2} \times \Delta_{\rho}[-3/2,y_i] \times u_i' \times \Delta_{\rho}[-y_i,3/2] \times \rho\abs^{-1/2} \times \rho\abs^{1/2}
\end{align*}
Thus, by Frobenius reciprocity, we have
\begin{align}\label{eq empty (iii)}
    (L_{\rho\abs^{1/2}} \circ R_{\rho\abs^{-1/2}}) \circ (L_{\rho\abs^{-1/2}} \circ R_{\rho\abs^{1/2}})  (u_i)\neq 0. 
\end{align}
On the other hand, look at
\[  u_i \hookrightarrow \Delta_{\rho}[-1/2,y_i] \times u_{\rho}(a_i, b_i-2) \times \Delta_{\rho}[-y_i,1/2]. \]
By Leibniz rule and Lemma \ref{lem:consecutive der Speh}, it follows that $(L_{\rho\abs^{1/2}} \circ R_{\rho\abs^{-1/2}}) \circ (L_{\rho\abs^{-1/2}} \circ R_{\rho\abs^{1/2}}) $ is a consecutive highest derivatives of the right hand side. Therefore, \eqref{eq empty (iii)} is also a consecutive highest derivatives.

Now we compute the order of the highest derivatives of $D_{\rho\abs^{-1/2}}^{(2s+1)}(\tau)$ with respect to $\rho\abs^{1/2}$:
\begin{itemize}
    \item the term $u_{\Delta}$ will not contribute to the order of the highest derivatives;
    \item each term $u_i$ in the product $\bigtimes_{i \in I^{\neq -1/2}} u_i$ will contribute $2$ to the order of the highest derivative if $x_i =1/2$;
    \item each term $ (L_{\rho\abs^{-1/2}} \circ R_{\rho\abs^{1/2}})  (u_i)$ in the product $\bigtimes_{i \in I^{=-1/2}} L_{\rho\abs^{-1/2}} \circ R_{\rho\abs^{1/2}}(u_i) $ will contribute $2$ to the order of the highest derivative by \eqref{eq empty (iii)}.
\end{itemize}
We conclude that the order of the highest derivatives of $D^{(2s+1)}_{\rho\abs^{-1/2}}(\tau)$ with respect to $\rho\abs^{1/2}$ is exactly
\[2|\{i\in I\mid\rho_i\cong\rho,\ x_i=1/2\}|+2|\{i\in I\mid\rho_i\cong\rho,\ x_i=-1/2\}| =2t. \]
This completes the proof of Part (b).    
\end{proof}

\subsection{The case \texorpdfstring{$I_2\neq\emptyset$}{I2 nonempty}}

We now deal with the remaining case that $I_2$ is non-empty. Set
\[ z:= \min_{i \in I_2}\{ y_i \}=\min_{i \in I_2}\{ 1-\half{a_i+b_i} \}.\]
 A key inequality, coming from our assumption that $u_i\not\cong \widetilde{u}_\Delta$ for all $i$, is
\begin{equation}\label{eq:key}
y+1<z.
\end{equation}

Now we describe the derivatives we will consider in this case. For $z-1 \leq w \leq x+1$, set 
\[ I_1^{w}=\{i \in I_1 \ | \ \rho_i\cong\rho,\ b_i>1,\ x_i=w \}.\]
Note the condition $b_i>1$ in the definition of this set. Put
\[ I_1':= \bigsqcup_{x+1 \geq w \geq z-1} I_{1}^{w},\qquad I_{1}^{[x+1, w]}:= \bigsqcup_{x+1 \geq w' \geq w} I_{1}^{w'},\qquad I_{1}^{[w, z-1]}:= \bigsqcup_{w \geq w' \geq z-1} I_{1}^{w'}.  \]
Then, for $z-1\leq w\leq x+1$ define
\[
T_w:=\bigtimes_{i\in I_1^{w}}\Delta_i^2\quad(w\neq x),\qquad T_x:=\Delta^2\times\bigtimes_{i\in I_1^{x}}\Delta_i^2 .
\]
 Identifying a segment $\seg{\rho}{\alpha}{\beta}$ with the set $\{\rho\abs^{\alpha},\rho\abs^{\alpha-1},\dots,\rho\abs^{\beta}\}$, we consider the multiset
\[
\Omega:=\sum_{i\in I_1'}2\cdot\big(\Delta_i\cap\seg{\rho}{x+1}{z-1}\big)+2\cdot\seg{\rho}{x}{z-1}=\Big\{(\rho\abs^{x+1})^{m_{x+1}},(\rho\abs^{x})^{m_x},\dots,(\rho\abs^{z-1})^{m_{z-1}}\Big\},
\]
where $m_w$ denotes the multiplicity of $\rho\abs^w$ in $\Omega$. In other words, $m_w$ is the number of segments of $T_{x+1}\times\cdots\times T_{w}$ containing $\rho\abs^w$. The derivative we consider is 
\[D^{(m_{z-1}+1)}_{\rho\abs^{z-1}}\circ\cdots\circ D^{(m_{x+1}+1)}_{\rho\abs^{x+1}}.\]

\begin{prop}\label{prop:nonempty}
Suppose that $\pi$ is as in Theorem~\ref{thm:reduced} with $I_2\neq\emptyset$. Let $\sigma$ be any irreducible subrepresentation of $\widetilde{u}_\Delta\rtimes\pi$. With the above notation, the following hold.
\begin{enumerate}
    \item [(a)]There is an injection 
    \[
\pi\hookrightarrow T_{x+1}\times T_x\times\cdots\times T_{z-1}\times\Pi\rtimes\pi_{0,temp},
\]
for some representation $\Pi$ of a general linear group.
\item [(b)] The following derivative does not vanish:
\[
D^{(m_{z-1}+1)}_{\rho\abs^{z-1}}\circ\cdots\circ D^{(m_{x+1}+1)}_{\rho\abs^{x+1}}(\sigma)\neq0 .
\]
\end{enumerate}
\end{prop}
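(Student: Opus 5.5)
For Part (a), I will follow the pattern of the proof of Proposition~\ref{prop:empty}. Set
\[
\widetilde{\pi}:=\bigtimes_{i\in I_1'}\widetilde{u}_i\times\bigtimes_{i\in I\setminus I_1'}u_i\rtimes\pi_0 .
\]
Apply Proposition~\ref{prop:embed} twice. Applied to $\widetilde{\pi}$, it gives
\[
\widetilde{\pi}\hookrightarrow\bigtimes_{i\in I_1'}(\widetilde u_i^-)^2\times\bigtimes_{i\notin I_1'}(u_i^-)^2\rtimes\pi_{0,temp}.
\]
Applied to $\pi$, the product of the $(u_i^-)^2$ may be taken in any order. Hence we can write $(u_i^-)^2\hookrightarrow\Delta_i^2\times(\widetilde u_i^-)^2$ for $i\in I_1'$; this uses $b_i>1$, which is exactly why that condition appears in $I_1^w$. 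Combined with Lemma~\ref{lem:Delta uj+}, this moves the $\Delta_i^2$ to the left and gives
\[
\pi\hookrightarrow \Big(\Delta^2\times\bigtimes_{i\in I_1'}\Delta_i^2\Big)\times\Pi'\rtimes\pi_{0,temp},
\]
where $\Pi'$ collects the remaining $\widetilde u_i^-$ and $u_i^-$. It remains to reorder the segment product as $T_{x+1}\times T_x\times\cdots\times T_{z-1}$, i.e.\ decreasingly in the top endpoint $x_i$. Two segments $\Delta_i,\Delta_j$ with $x_i>x_j$ fail to commute only if they are linked, which requires $y_i>y_j$. The standard-module ordering by center puts the lower-center segment first. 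I will check that, with the chosen ordering inherited from the standard module (increasing centers, with ties broken by Lemma~\ref{lem:Delta uj+}), the needed swaps are either between unlinked segments or run in the direction where the socle embedding $\Delta_i\times\Delta_j\hookrightarrow\Delta_j\times\Delta_i$ still exists. In the worst case I will instead embed into the product ordered by top endpoints, using that the Langlands subrepresentation embeds into any order of a ladder-like product whose top endpoints are weakly decreasing.

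For Part (b), let $\sigma\hookrightarrow\widetilde u_\Delta\rtimes\pi$. By Part (a),
\[
\sigma\hookrightarrow\widetilde u_\Delta\times T_{x+1}\times\cdots\times T_{z-1}\times\Pi\rtimes\pi_{0,temp}.
\]
I will then peel off supercuspidals level by level. For $w=x+1,x,\dots,z-1$ in turn, the segments in $T_{x+1}\times\cdots\times T_w$ that contain $\rho\abs^w$ have, after the earlier strips, top endpoint equal to $w$. They therefore yield $(\rho\abs^w)^{m_w}$ on the left, commuting past the untouched lower $T$'s in the same way as in \eqref{eq: strip off}. The extra copy at each level comes from $\widetilde u_\Delta$. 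Its lowest segment is $\seg{\rho}{x+1}{y+1}$, and the key inequality \eqref{eq:key}, $y+1<z$, guarantees that this segment contains all of $\rho\abs^{x+1},\dots,\rho\abs^{z-1}$. I will write $\widetilde u_\Delta\hookrightarrow\seg{\rho}{x+1}{y+1}\times u'$ and successively split off $\rho\abs^{x+1},\rho\abs^x,\dots,\rho\abs^{z-1}$ from it. Each piece is commuted to the left past the already-extracted $(\rho\abs^{w'})^{m_{w'}+1}$ with $w'>w+1$, which is an irreducible product of characters, and past the remaining segments, which are unlinked because their top endpoints are at most $w$ while the tail $\seg{\rho}{w-1}{y+1}$ contains them. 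The result is an embedding of $\sigma$ into
\[
(\rho\abs^{x+1})^{m_{x+1}+1}\times\cdots\times(\rho\abs^{z-1})^{m_{z-1}+1}\rtimes(\cdots),
\]
and Frobenius reciprocity gives the claimed nonvanishing.

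The main obstacle is the commutation bookkeeping at each level $w$. The extracted $(\rho\abs^w)$'s must be moved to the left of the truncated segments $\seg{\rho}{w-1}{\cdot}$ coming from higher levels, and also of $T_w$'s own truncations. I expect to handle this by always using the embedding $\seg{\rho}{w}{\beta}\hookrightarrow\rho\abs^w\times\seg{\rho}{w-1}{\beta}$ and noting that $\rho\abs^{w}$ commutes with any segment whose top is at most $w-1$ unless the two are linked. Linking would force the segment to end at $w-1$, and this is avoided because every truncated segment still reaches down to at least $z-1$ or $y$, below $w-1$ for $w>z-1$, by the definition of $z$ as the minimum of the $y_i$ over $I_2$.
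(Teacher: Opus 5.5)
There is a genuine gap in Part (a). The reordering into $T_{x+1}\times T_x\times\cdots\times T_{z-1}$ cannot be done by commuting segments in the $\GL$-part. Applying Proposition~\ref{prop:embed} to $\pi$ with all of $\Delta^2$ and $\Delta_i^2$ ($i\in I_1'$) in front yields these segments in increasing order of centers, and linked pairs among them do occur. For example, take $G=\Sp_{2n}(F)$, $\rho=\one$, $\pi_0=\one$, $\Delta=\seg{\one}{-1/2}{-5/2}$ and $\{u_i\}_{i\in I}=\{u_{\one}(5,4),\,u_{\one}(5,6),\,\St(\one,2)\}$. Here $\St(\one,2)$ gives $I_2\neq\emptyset$ with $z=-1/2$. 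Meanwhile $\seg{\one}{1/2}{-7/2}$ (center $-3/2$, in $T_{1/2}$) and $\seg{\one}{-1/2}{-9/2}$ (center $-5/2$, in $T_{-1/2}$) are linked. Increasing centers put the second before the first, whereas $T_{1/2}\times T_{-1/2}$ needs the reverse. For linked segments the two products have different socles, so there is no embedding in either direction, and no sequence of $\GL$-level swaps reaches the required order. Your fallback is also false: already $L(\rho\abs^0,\rho\abs^1)$ does not embed into $\rho\abs^1\times\rho\abs^0$, whose socle is $\seg{\rho}{1}{0}$.

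The missing idea is to peel off one level at a time inside the classical group. Let $\pi_w$ be the Langlands subquotient left after removing $T_{x+1},\dots,T_{w+1}$: replace $u_i$ by $\widetilde u_i$ for $i\in I_1^{[x+1,w+1]}$, and drop $\Delta^2$ once $w<x$. Apply Proposition~\ref{prop:embed} to $\pi_{w-1}$, and to $\pi_w$ with the segments of $T_w$ (together with $\Delta^2$ when $w=x+1$) in front. These share the top endpoint $w$ and hence commute; at $w=x+1$ they also commute with $\Delta^2$ because $y_i\le y$. This gives $T_w\rtimes\pi_{w-1}\hookrightarrow M(\pi_w)$, hence $\pi_w\hookrightarrow T_w\rtimes\pi_{w-1}$ by uniqueness of the socle, and one chains these embeddings. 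The reordering is thus carried out through the socles of the $M(\pi_w)$. A lower $\Delta_j$ stays inside $u_j^-$ until its own level, and Lemma~\ref{lem:Delta uj+}(b) lets a higher $\Delta_i$ pass the whole $u_j^-$ even though $\Delta_i$ and $\Delta_j$ are linked.

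In Part (b) your plan is the paper's: strip characters off the $T_w$, and take the extra copy at each level from the lowest segment $\seg{\rho}{x+1}{y+1}$ of $\widetilde u_\Delta$, using $y+1<z$. However, the commutations you invoke are wrong, and the order of operations matters.

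First, $\rho\abs^w$ is linked with every segment of top endpoint $w-1$, whatever its bottom endpoint. So it never commutes with the truncated segments $\seg{\rho}{w-1}{\cdot}$. Instead, extract all $m_w$ copies at once from the irreducible product of the segments with top $w$, via its highest left $\rho\abs^w$-derivative and Frobenius reciprocity, as the paper does with $\Pi^w$.

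Second, the remaining segments are not contained in the tail; for instance $\Delta=\seg{\rho}{x}{y}$ is linked with $\seg{\rho}{x+1}{y+1}$. Moreover, if you split $\widetilde u_\Delta\hookrightarrow\seg{\rho}{x+1}{y+1}\times u'$ before processing the $T_w$, then $u'$ is in the way. In the example above, $u'=\seg{\one}{3/2}{-1/2}$ is linked with $\rho\abs^{z-1}=\abs^{-3/2}$.

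The correct order is as follows. First reduce $T_{x+1}\times\cdots\times T_{z-1}$ to $(\rho\abs^{x+1})^{m_{x+1}}\times\cdots\times(\rho\abs^{z-1})^{m_{z-1}}\times\Pi'$. Then move the intact $\widetilde u_\Delta$ past these characters; this is irreducible by Theorem~\ref{thm:Tadic}, since $y+1\le w\le x+1$. Only then run \eqref{eq: strip off} on its lowest segment, which contains every $\rho\abs^w$ it must pass.
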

\begin{proof}
    For Part (a), decompose $I_1=I_1' \sqcup I_1''$. For $x+1 \geq w \geq z-2$, set $\pi_w$ to be the irreducible representation such that $\pi$ is the Langlands subquotient of 
    \[ T_{x+1} \times \cdots \times T_{w+1} \rtimes \pi_{w}.\]
    More precisely, we have $\pi_{x+1}= \pi$, and $\pi_x$, $\pi_w$ for $x-1 \geq w \geq z-2$ are characterized by (see Proposition \ref{prop:embed}) 
    \begin{itemize}
        \item $\pi_x \hookrightarrow \Delta \times \Delta \times \bigtimes_{i \in I_1^{x+1}} (\widetilde{u}_{i}^-)^2 \times \bigtimes_{i \in I_{1}^{[x,z-1]} \sqcup I_1'' \sqcup I_2} (u_i^-)^2 \rtimes \pi_{0,temp} ;$ 
       \item $\pi_w \hookrightarrow  \bigtimes_{i \in I_1^{[x+1,w+1]}} (\widetilde{u}_{i}^-)^2 \times \bigtimes_{i \in I_{1}^{[w ,z-1]} \sqcup I_1''\sqcup I_2} (u_i^-)^2 \rtimes \pi_{0,temp} .$
    \end{itemize}
    We apply induction on $x+1 \geq w \geq z-1$ to show that
    \begin{align*}
        \pi_{w} \hookrightarrow T_{w} \rtimes \pi_{w-1},
    \end{align*}
which implies Part (a).

First, we deal with the base case $w=x+1$. Note that  $\Delta_i$ commutes with $\Delta$ for all $i \in I_{1}^{x+1}$. Therefore, 
\begin{align*}
   T_{x+1} \rtimes \pi_{x} \hookrightarrow T_{x+1}  \times \Delta^2 \times  \bigtimes_{i \in I_1^{x+1}} (\widetilde{u}_i^-)^2 \times \bigtimes_{i \in I_{1}^{[x,z-1]} \sqcup I_1''\sqcup I_2} (u_i^-)^2 \rtimes \pi_{0,temp}
     \hookrightarrow M(\pi).
\end{align*}
Here, we apply Proposition \ref{prop:embed} to $\pi_x$ for the first injection, and to $\pi_{x+1}$ for the second. As a consequence, $\pi=\pi_{x+1}$ is the unique irreducible subrepresentation of $T_{x+1} \rtimes \pi_{x}$.

Similarly, the case that $x \geq w \geq z-1$ follows from the injections
\[  T_{w} \rtimes \pi_{w-1} \hookrightarrow T_{w}  \times  \bigtimes_{i \in I_1^{[x+1,w]}} (\widetilde{u}_i^-)^2 \times \bigtimes_{i \in I_{1}^{[w-1,z-1]} \sqcup I_1''\sqcup I_2} (u_i^-)^2 \rtimes \pi_{0,temp} \hookrightarrow M(\pi_{w}).  \]
This proves Part (a).

For Part (b), we first show that 
\begin{align*}
    T_{x+1}\times T_x\times\cdots\times T_{z-1} \hookrightarrow (\rho \abs^{x+1})^{m_{x+1}}\times(\rho \abs^{x})^{m_{x}}\times\cdots\times(\rho \abs^{z-1})^{m_{z-1}} \times \Pi'
\end{align*}
for some representation $\Pi'$.  More precisely, for $x+1 \geq w \geq z-1$, set
\[ \Pi^{w}:= (\Delta_{\rho}[\min(x,w),y])^2 \times \bigtimes_{i \in I_1'} (\Delta_{\rho}[\min(x_i,w), y_i])^2.  \]
Here, the order of the product is taken so that $\min(x_i,w)$ is non-increasing. Also, if $w < y_i$, then $\Delta_{\rho}[w, y_i]$ means the trivial representation of $\GL_0$. We claim that
\[ T_{x+1}\times T_x\times\cdots\times T_{z-1} \hookrightarrow (\rho \abs^{x+1})^{m_{x+1}}\times(\rho \abs^{x})^{m_{x}}\times\cdots\times(\rho \abs^{w})^{m_{w}}  \times \Pi^{w-1} \]
for all $x+1 \geq w \geq z-1$. Indeed, when $w=x+1$, we have
\begin{align*}
    T_{x+1}\times T_x\times\cdots\times T_{z-1} &\hookrightarrow (\rho \abs^{x+1})^{m_{x+1}} \times \bigtimes_{i \in I_1^{x+1}} (\Delta_{\rho}[x,y_i])^2 \times  T_x\times\cdots\times T_{z-1}\\ &=(\rho \abs^{x+1})^{m_{x+1}} \times \Pi^{x}.
\end{align*}
For $x+1 >w \geq z-1$, we have
\begin{align*}
    \Pi^{w}& =  \left((\Delta_{\rho}[w,y])^2 \times\bigtimes_{i \in I_1^{[x+1,w]}} (\Delta_{\rho}[w, y_i])^2 \right) \times   \bigtimes_{i \in I^{[w-1,z-1]}_1} (\Delta_{\rho}[x_i,y_i])^2.
\end{align*}
Since the first part is an irreducible induction, with 
\[ L_{\rho\abs^w}^{(m_{w})}\left((\Delta_{\rho}[w,y])^2 \times\bigtimes_{i \in I_1^{[x+1,w]}} (\Delta_{\rho}[w, y_i])^2 \right)= (\Delta_{\rho}[w-1,y])^2 \times\bigtimes_{i \in I_1^{[x+1,w]}} (\Delta_{\rho}[w-1, y_i])^2 \]
being its highest derivative. Thus, Frobenius reciprocity gives
\[ \Pi^w \hookrightarrow (\rho\abs^w)^{m_w} \rtimes \Pi^{w-1}.\]
By induction hypothesis, this proves the claim.

Finally, since the lowest segment of $\widetilde{u}_\Delta$ is $\Delta_{\rho}[x+1, y+1]$, and $y+1 \leq z-1$ from \eqref{eq:key}, we see that $\widetilde{u}_\Delta$ commutes with $\rho\abs^{w}$ for all $x+1 \geq w \geq z-1$. Thus, repeating the argument in \eqref{eq: strip off}, we obtain an injection
    \begin{align*}
        \widetilde{u}_\Delta &\times (\rho \abs^{x+1})^{m_{x+1}}\times(\rho \abs^{x})^{m_{x}}\times\cdots\times(\rho \abs^{z-1})^{m_{z-1}}\\
        &\hookrightarrow (\rho \abs^{x+1})^{m_{x+1}+1}\times(\rho \abs^{x})^{m_{x}+1}\times\cdots\times(\rho \abs^{z-1})^{m_{z-1}+1}  \times \Delta_{\rho}[z-2,y+1] \times u',
    \end{align*}
    where $u'$ is the essentially generalized Speh representation such that 
    $\widetilde{u}_\Delta$ is the unique irreducible subrepresentation of $\Delta_{\rho}[x+1,y+1] \times u' .$
Combining with Part (a) and the above claim, this completes the proof of Part (b).
\end{proof}

\begin{prop}\label{prop:vanish}
In the setting of Proposition~\ref{prop:nonempty}, we have
\[
D^{(m_{z-1}+1)}_{\rho\abs^{z-1}}\circ\cdots\circ D^{(m_{x+1}+1)}_{\rho\abs^{x+1}}(\tau)=0 .
\]
\end{prop}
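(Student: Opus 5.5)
The plan is to compute the composite derivative of
\[
\tau=u_\Delta\times\bigtimes_{i\in I}u_i\rtimes\pi_0
\]
one step at a time, from $w=x+1$ down to $w=z-1$, with the Leibniz rule \eqref{eq:Leibniz}. We show by descending induction on $w$ that after applying $D^{(m_{x+1}+1)}_{\rho\abs^{x+1}}$ through $D^{(m_{w+1}+1)}_{\rho\abs^{w+1}}$, every surviving constituent has the form
\[
u_\Delta\times\bigtimes_{i\in I}u_i^{(w)}\rtimes\pi_0^{(w)}.
\]
Here $u_i^{(w)}$ is obtained from $u_i$ by a sequence of left derivatives at $\rho\abs^{x+1},\dots,\rho\abs^{w+1}$ and right derivatives at the contragredient exponents. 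By Lemma~\ref{lem:consecutive der Speh}, such a sequence must be consecutive and begin at the top $x_i$ of the lowest segment $\Delta_i$. The factor $u_\Delta$ is left untouched throughout, which is the ``locking'' of the lowest segment of $\widetilde u_\Delta$. We then bound, at each stage, the maximal order $k$ for which $D^{(k)}_{\rho\abs^w}$ can be nonzero on such a product, and show that this maximum is at most $m_w$. It suffices to do this at a single $w$; the contradiction should appear at the first $w$ where the counts fail.

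First key step: the behaviour of $u_\Delta$. Its lowest segment is $\Delta=\seg{\rho}{x}{y}$, so by Lemma~\ref{lem:consecutive der Speh} its left derivatives can only start at $\rho\abs^x$ and run down to $\rho\abs^y$. Its right derivatives behave symmetrically at $\rho\abs^{-x},\dots,\rho\abs^{-y}$. Since the derivatives are applied in decreasing order of exponent starting at $x+1$, and $u_\Delta$ is left $\rho\abs^{x+1}$-reduced, the factor $u_\Delta$ contributes $0$ at $w=x+1$. At each $w\in[z-1,x]$ it contributes at most $2$ (one left, one right derivative). Note that $y+1<z$ by \eqref{eq:key}, so the whole range $[z-1,x]$ lies inside $\Delta$; this matches the term $2\cdot\seg{\rho}{x}{z-1}$ in $\Omega$.

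Second key step: the contributions of the $u_i$. For $i\in I_1'$, each $u_i$ contributes at most $2$ at every $w\in\Delta_i\cap[z-1,x+1]$, matching $\Omega$. Any $u_i$ with $b_i=1$, i.e. a tempered $\St(\rho,a_i)$, contributes only through its single segment. We must check carefully that such a factor can contribute at only one of the $w$, and that this is absorbed because the count has slack. For $i\in I_2$ we have $x_i=x+1$ and $y_i\ge z$, so these factors can contribute at $w=x+1,\dots,y_i$ but not at $w=z-1$. For the remaining $i$, Lemma~\ref{lem:consecutive der Speh} forbids a start inside $[z-1,x+1]$ unless $x_i$ lies in that range.

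Finally, $\pi_0$ contributes only when $\rho$ is trivial and $w=\pm\tfrac12$. A chain through $w=\tfrac12$ and $w=-\tfrac12$ would need further care; this case should be handled with the highest-derivative identity \eqref{eq I2 nonempty (ii) tau} as in Propositions~\ref{prop:empty-half} and \ref{prop:empty-32}.

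The comparison at $w=z-1$ is where the contradiction appears. There the $I_2$ factors cannot contribute, since by the definition of $z$ their chains stop before $z-1$, at the smallest $y_i$ among them. Hence the total available order is at most $2+\sum_{i\in I_1'}2\cdot[\rho\abs^{z-1}\in\Delta_i]=m_{z-1}$. This is one less than the order required, so the derivative vanishes.

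The main obstacle is showing that at the intermediate exponents $w>z-1$ the earlier derivatives of orders $m_w+1$ do not ``use up'' chains in a way that lets a chain continue to $z-1$. In particular, a chain could begin at $\rho\abs^{x+1}$ in some $u_i$ with $i\in I_2$ (these are not counted in $\Omega$ at $x+1$) and continue down. I would handle this by the consecutiveness in Lemma~\ref{lem:consecutive der Speh}: every surviving term's chain at $u_i$ for $i\in I_2$ ends by $y_i\ge z$. So at $z-1$ only the $u_\Delta$ and $I_1'$ chains are available, and each has left/right multiplicity at most one per factor.
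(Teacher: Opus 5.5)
Your strategy is the paper's. You expand by the Leibniz rule and use Lemma~\ref{lem:consecutive der Speh} to force each factor's left and right chains to be consecutive, to start at $x_i$, and to use each exponent at most once. You then compare orders only at $w=z-1$.

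The paper expands the whole composite at once, writing $\tau=\bigtimes_{i\in I^\circ}u_i\rtimes\one$ with $u_0=u_\Delta$ and $u_{-1}=\abs^{1/2}$, so it needs no stage-by-stage invariant. Your invariant is false as stated, in two ways:
\begin{itemize}
\item $u_\Delta$ is not left untouched. It is only left $\rho\abs^{x+1}$-reduced and can be differentiated from $\rho\abs^{x}$ downward, which your own count of $2$ at each $w\in[z-1,x]$ relies on.
\item The bound ``at most $m_w$'' fails at $w=x+1$ because of the $I_2$ factors.
\end{itemize}
More importantly, at $w=z-1$ your bound equals $m_{z-1}$ exactly, so there is no slack. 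Every factor you leave out of the count must be shown to contribute \emph{nothing} at $\rho\abs^{z-1}$, and for two kinds of factors you do not establish this.

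\textbf{Tempered factors.} Take $u_i=\St(\rho,a_i)=\seg{\rho}{x_i}{-x_i}$ with $x_i\in[z-1,x+1]$. Such a factor can contribute at many exponents: a left chain from $x_i$ downward, and a right chain at $\rho\abs^{-x_i},\rho\abs^{1-x_i},\dots$. So ``only one $w$'' is false, and ``absorbed by slack'' is impossible at $z-1$. The missing observation is that for $i\in I_2$ one has $x_i+y_i=1-b_i$ and $x_i=x+1$, hence $y_i=-x-b_i\le -x-1$. Thus $z\le -x-1$, and for a tempered factor with $x_i\le x+1$,
$z-1\le -x-2<-x-1\le -x_i$. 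So $\rho\abs^{z-1}\notin\Delta_i$, and by Lemma~\ref{lem:consecutive der Speh}(c) neither chain of a tempered factor reaches $z-1$. This fact is implicit in the paper's identification of $m_{\rho\abs^{z-1}}(\mathfrak m_\pi)$ with $m_{z-1}$.

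\textbf{The factor $\pi_0$.} No ``further care'' is needed, and nothing from Propositions~\ref{prop:empty-half} and \ref{prop:empty-32} applies, since those treat the case $I_2=\emptyset$. Here $z$ equals some $y_i=1-\half{a_i+b_i}\le 0$, so $z-1\le -1$. But $\pi_0$ can only be differentiated at $\abs^{\pm1/2}$, and its derivatives are $\one$.

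With these two facts the count at $z-1$ is:
\begin{itemize}
\item $2$ from $u_\Delta$, since $y<z-1\le x$ by \eqref{eq:key};
\item $2$ for each $i\in I_1'$ with $\rho\abs^{z-1}\in\Delta_i$.
\end{itemize}
This totals exactly $m_{z-1}<m_{z-1}+1$, and your argument closes.
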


\begin{proof}
For simplicity, for tuples $\underline d=(d_{x+1},d_x,\dots,d_{z-1})$, $\underline l=(l_{x+1},\dots,l_{z-1})$ and\\ $\underline r=(r_{x+1},\dots,r_{z-1})$, set
\[
D_{(\underline d)}:=D^{(d_{z-1})}_{\rho\abs^{z-1}}\circ\cdots\circ D^{(d_{x+1})}_{\rho\abs^{x+1}},\quad
L_{(\underline l)}:=L^{(l_{z-1})}_{\rho\abs^{z-1}}\circ\cdots\circ L^{(l_{x+1})}_{\rho\abs^{x+1}},\quad
R_{(\underline r)}:=R^{(r_{z-1})}_{\rho\abs^{-z+1}}\circ\cdots\circ R^{(r_{x+1})}_{\rho\abs^{-x-1}}.
\]
Also, set 
\begin{align*}
    I^\circ:= \begin{cases}
        I \sqcup \{0\} & \text{ if }\pi_0=\pmb{1},\\
        I \sqcup \{0, -1\} & \text{ if }\pi_0= \abs^{1/2} \rtimes \pmb{1}_{\Sp_0},
    \end{cases}
\end{align*}
and put $u_0:= u_{\Delta}$, $u_{-1}:= \abs^{1/2}$. In this way, we have
\[ \tau= u_\Delta\times\bigtimes_{i\in I}u_i\rtimes\pi_0 = \bigtimes_{i \in I^\circ} u_i \rtimes \pmb{1}. \]

Now consider an arbitrary tuple $\underline d=(d_{x+1},d_x,\dots,d_{z-1})$. By Leibniz rule, we have  
\begin{align}\label{eq Leibniz final}
    D_{(\underline d)}(\tau)=\Big(\sum_{(\underline r^i,\underline l^i)}\ \bigtimes_{i\in I^\circ}R_{(\underline r^i)}\circ L_{(\underline l^i)}(u_i)\Big)\rtimes\one,
\end{align}
where the sum is taken over all collections $\{(\underline r^i,\underline l^i)\}_{i\in I^\circ}$ such that
\[
\sum_{i\in I^\circ}\big(r^i_w+l^i_w\big)=d_w\qquad\text{for all } z-1\leq w\leq x+1 .
\]
Based on Lemma \ref{lem:consecutive der Speh}, we now describe a necessary condition for the non-vanishing of \eqref{eq Leibniz final}.

For a multi-segment $\mathfrak m=\{\seg{\rho_k}{\alpha_k}{\beta_k}\}_{k\in K}$ (a multi-set of segments) and a supercuspidal $\rho\abs^w$, we define
\[
m_{\rho\abs^w}(\mathfrak m):=|\{k\in K\mid\rho\abs^w\in\seg{\rho_k}{\alpha_k}{\beta_k}\}|.
\]
For $\mathfrak m=\{\seg{\rho_k}{\alpha_k}{\beta_k}\}_{k\in K}$ and $\mathfrak n=\{\seg{\rho'_k}{\alpha'_k}{\beta'_k}\}_{k\in K'}$, we write $\mathfrak n\preceq\mathfrak m$ if there is an injection $\iota\colon K'\to K$ such that
\[
\seg{\rho'_k}{\alpha'_k}{\beta'_k}\subseteq\seg{\rho_{\iota(k)}}{\alpha_{\iota(k)}}{\beta_{\iota(k)}}\quad\text{and}\quad\alpha'_k=\alpha_{\iota(k)}\qquad\text{for all } k\in K'.
\]
We make three observations.
\begin{itemize}
\item Suppose that $R_{(\underline r^i)}\circ L_{(\underline l^i)}(u_i) \neq 0$ where $i \in I^{\circ} \setminus \{-1\}$. Then by Lemma \ref{lem:consecutive der Speh}, we have $0 \leq r^i_w \leq 1$ and $0 \leq l^i_w \leq 1$ for all $x+1 \geq w \geq z-1$. Moreover, the set $\{ \rho\abs^{w}\ | \ l^i_w \neq 0\}$ (resp.\ $\{ \rho\abs^{w}\ | \ r^i_w \neq 0\}$) forms a single segment $\Delta_L$ (resp. $\Delta_{R}$) such that
\[ \{\Delta_{L}, \Delta_{R}\} \preceq \{ \Delta_i, \Delta_i\}. \]
\item If $\mathfrak{n}_1 \preceq \mathfrak{m}_1$ and $\mathfrak{n}_2 \preceq \mathfrak{m}_2$, then the relation is preserved by disjoint union of multi-segments
\[ \mathfrak{n}_1  \sqcup \mathfrak{n}_2 \preceq \mathfrak{m}_1 \sqcup\mathfrak{m}_2.  \]
    \item If $\mathfrak n\preceq\mathfrak m$, then $m_{\rho\abs^w}(\mathfrak n)\leq m_{\rho\abs^w}(\mathfrak m)$ for all $w$.
\end{itemize}

Now suppose that $ D_{(\underline d)}(\tau) \neq 0$ for some tuple  $\underline d=(d_{x+1},d_x,\dots,d_{z-1})$. By the above observations and \eqref{eq Leibniz final}, there exists a multi-segment $\mathfrak{n}$ such that the following holds:
\begin{itemize}
    \item[(i)] We have $\mathfrak{n} \preceq \mathfrak{m}_{\pi}, $ where $\mathfrak{m}_{\pi}$ is the following multi-segment
    \[ \mathfrak{m}_{\pi}:= \{ \Delta_i, \Delta_{i}\}_{\substack{i \in I,\ \rho_i \cong \rho,\\ x+1 \geq x_i \geq z-1}} \sqcup \{ \Delta, \Delta \} \sqcup \{\abs^{1/2}, \abs^{-1/2}\}.\] 
    (Here the $\{\abs^{1/2}, \abs^{-1/2}\}$ comes from $u_{-1}$.)
    \item[(ii)] For all $x+1 \geq w \geq z-1$, we have $m_{\rho\abs^{w}}(\mathfrak{n})= d_w$. 
\end{itemize}
In particular, we have 
\[ d_{z-1} = m_{\rho\abs^{z-1}}(\mathfrak{n}) \leq m_{\rho\abs^{z-1}}(\mathfrak{m}_{\pi}). \]
However, recall that $ -1\geq  z-1 >y $. Thus,
\begin{align*}
     m_{\rho\abs^{z-1}}(\mathfrak{m}_{\pi})= 2 |\{i \in I\ | \ \rho_i \cong \rho,\ x+1 \geq x_i \geq z-1,\ y_i \leq z-1\}| +2,
\end{align*}
which is exactly $m_{z-1}$. We conclude that 
\[
D^{(m_{z-1}+1)}_{\rho\abs^{z-1}}\circ\cdots\circ D^{(m_{x+1}+1)}_{\rho\abs^{x+1}}(\tau)=0
\]
must hold, since the order of $D^{(m_{z-1}+1)}_{\rho\abs^{z-1}}$ is greater than $m_{\rho\abs^{z-1}}(\mathfrak{m}_{\pi})=m_{z-1}.$ This completes the proof of the proposition.
\end{proof}

\subsection{Proof of \texorpdfstring{Theorem~\ref{thm:reduced}}{Theorem 5.1}}
 We have shown in all cases (Propositions \ref{prop:empty}, \ref{prop:empty-half}, \ref{prop:empty-32}, \ref{prop:nonempty} and \ref{prop:vanish}) that the induction
 \[ \widetilde{u}_\Delta \rtimes \pi \]
 is not semisimple. Therefore, the representation $\pi$ is not unitary by \ref{U1}. \qed

\section{Proof of the main theorem}\label{sec:proof}

\begin{proof}[Proof of Theorem~\ref{thm:main}]
First, if $\pi$ satisfies condition $(\mathrm{A}')$, then $\pi$ is unitary by Proposition \ref{prop:A prime unitary}. For the other direction, suppose to the contrary that there exists a unitary representation $\pi$ of purely bad parity that does not satisfy condition $(\mathrm{A}')$. Then Proposition \ref{prop:reduction} produces a unitary minimal counter example $\pi'$, which contradicts Theorem \ref{thm:reduced}. This completes the proof of Theorem \ref{thm:main}.
\end{proof}


\end{document}